\documentclass[a4paper,11pt]{amsart}
\pdfoutput=1
\usepackage{amssymb,amsmath,amsthm,amsfonts,centernot}
\usepackage[colorlinks=true,linkcolor=blue,citecolor=blue,urlcolor=cyan, pdfpagelabels=false]{hyperref}
\usepackage{amsmath,tikz}
\usepackage{float}
\usepackage{dsfont}
\usepackage{color}
\usetikzlibrary{matrix}
\usepackage{perpage} 
\usepackage{makecell}
\usepackage{threeparttable}
\MakePerPage{footnote}
\begin{document}
\numberwithin{equation}{section}
\newtheorem{theorem}{Theorem}
\newtheorem{algo}{Algorithm}
\newtheorem{lem}{Lemma} 
\newtheorem{de}{Definition} 
\newtheorem{ex}{Example}
\newtheorem{pr}{Proposition} 
\newtheorem{claim}{Claim} 
\newtheorem{re}{Remark}
\newtheorem{co}{Corollary}
\newtheorem{conv}{Convention}
\newcommand{\di}{\hspace{1.5pt} \big|\hspace{1.5pt}}
\newcommand{\idi}{\hspace{.5pt}|\hspace{.5pt}}
\newcommand{\hs}{\hspace{1.3pt}}
\newcommand{\thmf}{Theorem~1.15$'$}
\newcommand{\ndi}{\centernot{\big|}}
\newcommand{\nidi}{\hspace{.5pt}\centernot{|}\hspace{.5pt}}
\newcommand{\lp}{\mbox{$\hspace{0.12em}\shortmid\hspace{-0.62em}\alpha$}}
\newcommand{\PQ}{\bb{P}^1(\bb{Q})}
\newcommand{\pmn}{\cl{P}_{M,N}}
\newcommand{\lcm}{\operatorname{lcm}}
\newcommand{\he}{holomorphic eta quotient\hspace*{2.5pt}}
\newcommand{\hes}{holomorphic eta quotients\hspace*{2.5pt}}
\renewcommand{\phi}{\varphi}
\newcommand{\Z}{\bb{Z}}
\newcommand{\ZD}{\Z^{\bb{D}_N}}
\newcommand{\N}{\bb{N}}
\newcommand{\Q}{\bb{Q}}
\newcommand{\pii}{{{\pi}}}
\newcommand{\R}{\bb{R}}
\newcommand{\C}{\bb{C}}
\newcommand{\I}{\hs\cl{I}_{n,N}}
\newcommand{\SL}{\operatorname{SL}_2(\Z)}
\newcommand{\St}{\operatorname{Stab}}
\newcommand{\D}{\bb{D}_N}
\newcommand{\rh}{{{\boldsymbol\rho}}}
\newcommand{\bh}{{\cl{M}}} 
\newcommand{\lv}{\hyperlink{level}{{\text{level}}}\hspace*{2.5pt}}
\newcommand{\fct}{\hyperlink{factor}{{\text{factor}}}\hspace*{2.5pt}}
\newcommand{\q}{\hyperlink{q}{{\mathbin{q}}}}
\newcommand{\rd}{\hyperlink{redu}{{{\text{reducible}}}}\hspace*{2.5pt}}
\newcommand{\ird}{\hyperlink{irredu}{{{\text{irreducible}}}}\hspace*{2.5pt}}
\newcommand{\str}{\hyperlink{strong}{{{\text{strongly reducible}}}}\hspace*{2.5pt}}
\newcommand{\rdn}{\hyperlink{redon}{{{\text{reducible on}}}}\hspace*{2.5pt}}
\newcommand{\atl}{\hyperlink{atinv}{{\text{Atkin-Lehner involution}}}\hspace*{3.5pt}}
\newcommand{\atls}{\hyperlink{atinv}{{\text{Atkin-Lehner involutions}}}\hspace*{3.5pt}}
\newcommand{\T}{\mathrm{T}}
\renewcommand{\H}{\fr{H}}
\newcommand{\W}{\text{\calligra W}_n}
\newcommand{\GG}{\cl{G}}
\newcommand{\g}{\fr{g}}
\newcommand{\Gm}{\Gamma}
\newcommand{\Gmtl}{\widetilde{\Gamma}_\ell}
\newcommand{\gm}{\gamma}
\newcommand{\go}{\gamma_1}
\newcommand{\gmt}{\widetilde{\gamma}}
\newcommand{\gmdt}{\widetilde{\gamma}'}
\newcommand{\gmot}{\widetilde{\gamma}_1}
\newcommand{\gmdot}{{\widetilde{\gamma}}'_1}
\newcommand{\s}{\Large\text{{\calligra r}}\hspace{1.5pt}}
\newcommand{\ms}{m_{{{S}}}}
\newcommand{\nisim}{\centernot{\sim}}
\newcommand{\level}{\hyperlink{level}{{\text{level}}}}
\newcommand{\Redcon}{the \hyperlink{red}{\text{Reducibility~Conjecture}}}
\newcommand{\ReDcon}{The \hyperlink{red}{\text{Reducibility~Conjecture}}}
\newtheorem*{ThmA}{Theorem A}
\newtheorem*{ThmB}{Theorem B}
\newtheorem*{ThmC}{Theorem C}
\newtheorem*{ThmD}{Theorem D}
\newtheorem*{ThmE}{Theorem E}
\newtheorem*{ThmF}{Theorem F}
\newtheorem*{LmA}{Lemma A}
\newtheorem*{CorA}{Corollary A}
\newtheorem*{CorB}{Corollary B}
\newtheorem*{CorC}{Corollary C}
\newcommand{\Conred}{Conjecture~$1'$}
\newcommand{\effth}{Theorem~$\ref{27.11.2015}'$}
\newcommand{\Conredd}{Conjecture~$1''$}
\newcommand{\Conreddd}{Conjecture~$1'''$}
\newcommand{\Conired}{Conjecture~$2'$}
\newtheorem*{cau}{Caution}
\newtheorem{thrmm}{Theorem}[section]
\newtheorem*{thmA}{Theorem~A}
\newtheorem*{corA}{Corollary~A}
\newtheorem*{corB}{Corollary~B}
\newtheorem*{corC}{Corollary~C}
\newtheorem{no}{Notation}
\renewcommand{\thefootnote}{\fnsymbol{footnote}}
\newtheorem{oq}{Open question}
\newtheorem{conj}{Conjecture}
\newtheorem{hy}{Hypothesis} 
\newtheorem{expl}{Example}
\newcommand\ileg[2]{\bigl(\frac{#1}{#2}\bigr)}
\newcommand\leg[2]{\Bigl(\frac{#1}{#2}\Bigr)}
\newcommand{\e}{\eta}
\newcommand{\sgn}{\operatorname{sgn}}
\newcommand{\bb}{\mathbb}
\newtheorem*{conred}{Conjecture~\ref{con1}$\mathbf{'}$}
\newtheorem*{conredd}{Conjecture~\ref{con1}$\mathbf{''}$}
\newtheorem*{conreddd}{Conjecture~\ref{con1}$\mathbf{'''}$}
\newtheorem*{conired}{Conjecture~\ref{19.1Aug}$\mathbf{'}$}
\newtheorem*{efth}{Theorem~\ref{27.11.2015}$\mathbf{'}$}
\newtheorem*{eflem}{Theorem~\ref{15May15}.$(b)\mathbf{'}$}
\newtheorem*{procl}{\textnormal{\textit{Proof}}}
\newtheorem*{thmff}{Theorem~\ref{7.4Jul}$\mathbf{'}$}
\newtheorem*{coo}{Corollary~\ref{17Aug}$\mathbf{'}$}
\newcommand{\cooo}{Corollary~\ref{17Aug}$'$}
\newtheorem*{cotw}{Corollary~\ref{17.1Aug}$\mathbf{'}$}
\newcommand{\cotww}{Corollary~\ref{17.1Aug}$'$}
\newtheorem*{cothr}{Corollary~\ref{17.2Aug}$\mathbf{'}$}
\newcommand{\cothre}{Corollary~\ref{17.2Aug}$'$}
\newtheorem*{cne}{Corollary~\ref{15.5Aug}$\mathbf{'}$}
\newcommand{\cnew}{Corollary~\ref{15.5Aug}$'$\hspace{3.5pt}}
\newcommand{\fr}{\mathfrak}
\newcommand{\cl}{\mathcal}
\newcommand{\rad}{\mathrm{rad}}
\newcommand{\ord}{\operatorname{ord}}
\newcommand{\m}{\setminus}
\newcommand{\G}{\Gamma_1}
\newcommand{\GN}{\Gamma_0(N)} 
  \newcommand{\un}{\hs\underline{\hspace{5pt}}\hs}

\title[The structure of irreducible cuspidal eta quotients]{The structure of
irreducible \\
cuspidal eta quotients}
\author{Soumya Bhattacharya}
\address
{Indian Institute of Science Education and Research Kolkata} 
\email{soumya.bhattacharya@iiserkol.ac.in}
\keywords{Dedekind eta function, Atkin-Lehner eigenform, Fricke eigenform, Mersmann level, irreducible cuspidal eta quotients, Order matrix, Kronecker product over the primes, special set of cusp classes}
\subjclass[2020]{Primary 11F20, 11F37, 11F11; Secondary 11Y05, 11Y16, 11F12}

 \begin{abstract}
We show that an ir{\-}reducible cuspidal eta quotient 
of level~$N$ is uniquely identified by its orders of vanishing at 
$O((\log_2 N)^{r-1})$ cusps, where 
$r$ denotes the number of distinct prime divisors of $N$.
In particular, 
all such eta quotients 
which are Fricke eigenforms of a prime power level,
are uniquely identified only
by their orders of vanishing at~$\infty$. 
We also prove that for all $k\in\N$, there exists a primitive and irreducible 
cuspidal eta quotient of weight $k/2$. It follows from the exhaustiveness of Zagier's list of the primitive holomorphic eta quotients of weight~$1/2$ that the 1st Mersmann level, $\cl{M}_1=12$. Here we show that for all $k\in\N$, the $k$-th Mersmann level is divisible by $2^{2k}$.
 \end{abstract}
 
 \maketitle
 
 \section{Introduction} 
 \label{intro}
 An eta quotient $f$ is a finite product of the form 
\begin{equation}
\prod\e(dz)^{X_d},\label{13.04.2023}
\end{equation}
where $d\in\N$, $X_d\in\Z$ and $\e$ is
the Dedekind eta function defined by 
 \begin{equation}
 \eta(z)=e^{\frac{\pi iz}{12}}\prod_{n=1}^\infty(1-e^{2\pi inz})
\nonumber\end{equation} 
for all $z$ in the complex upper half-plane $\H$.
Eta quotients occur in several areas of Mathematics (see \cite{ak, coh, z} 
for an overview). Since eta is a modular form of weight~$1/2$ with a multiplier system on $\operatorname{SL}_2(\Z)$ (see 
\cite{b}), the eta quotient $f$ in
(\ref{13.04.2023}) transforms like a modular form of weight $\frac12\sum_dX_d$ with a multiplier system on suitable
congruence subgroups of $\operatorname{SL}_2(\Z)$. The largest among these subgroups is 
\begin{equation}
\Gm_0(N):=\Big{\{}\begin{pmatrix}a&b\\ c&d\end{pmatrix}\in\operatorname{SL}_2(\Z)\hspace{3pt}\Big{|}\hspace{3pt}
c\equiv0\hspace*{-.3cm}\pmod N\Big{\}},
\nonumber\end{equation}
where $N:=\operatorname{lcm}\{d\in\N\hs\idi\hs X_d\neq0\}.$ 
We call $N$ the level of $f$. \emph{An eta quotient on} $\GN$ is an eta quotient whose level divides $N$.
Since $\eta$ is nonzero on $\H$, the eta quotient on $\GN$ is holomorphic if and only if
it does not have any pole at the cusps of $\GN$.  
A minimal set of representatives of the cusps of $\GN$ is given by
\begin{equation}
\Big{\{}\frac{a}{t}\text{ \ such that \ } t|N \text{ \  \& \ } \gcd(a,t)=1\Big{\}}/\sim\hspace{1.5pt} ,
\label{minset}
\end{equation}
where $\frac{a}{t}\sim\frac{b}{t}$ \ if and only if \ $a\equiv b\pmod{\gcd(t,N/t)}$ (see \cite{ds}). For ${a}/t$ as in \eqref{minset} and for an eta quotient ${f=\underset{d|N}{\prod}\eta(dz)^{X_d}}$ on $\GN$, we have
\begin{equation}
  \mathrm{ord}_{{a}/{t}}(f\hspace{1pt};\Gamma_0(N)) \   = \  \frac{N}{24\gcd(t,N/t)}\hspace{1pt}\sum_{d|N}\frac{\gcd(t,d)}{\lcm(t,d)}X_d
\label{27.04.2015.1}\end{equation}
(see \cite{ymart}). We define the \emph{cusp classes} of $\GN$ by
\begin{equation}\bigg\{\text{The cusps represented by }\frac{a}t\text{ for all $a\in\Z$ with}\gcd(a,t)=1\bigg\}_{t|N}.
\label{14.12.2025}\end{equation}
By the \emph{order of vanishing} of the eta quotient $f$ at a cusp class
of $\GN$, we mean order of vanishing of $f$ at any cusp in that cusp class (see \eqref{27.04.2015.1}). 
The reciprocals of the divisors of $N$ serve as the canonical representatives of the cusp classes
of $\GN$.
An eta quotient $f$ on $\GN$ is uniquely determined by its orders of vanishing 
at the cusp classes of $\GN$ (see \cite{B1, km}) and
the \emph{holomorphy} of $f$ is equivalent to the non{\-}negativity of its orders 
at all the cusp classes of $\GN$.

There are only finitely many holomorphic eta quotients of a given weight and a given level (see Theorem~3.9 in \cite{b}).
It is also well-known that every modular function on $\GN$ is a rational function of eta quotients on $\Gamma_0(4N)$ (see \cite{KF}).
Ono in \cite{on},
asks for a classification of 
the spaces of modular forms
which are generated by eta quotients. 
We note that if $N$ is sufficiently larger than
the number of its divisors, then the space of modular forms of a given weight on $\GN$
does not contain enough holomorphic eta quotients to constitute a basis
(see \S3 of \cite{B1} or \S2 of \cite{rw}). 

\begin{de}[Irreducible holomorphic eta quotients]
\textnormal{Let $f$ and $g\notin\{1,f\}$ be two holomorphic eta quotients.
If the eta quotient $f/g$ is holomorphic,
we call $g$ a \emph{proper factor} of $f$.
A holomorphic eta quotient $f$ is \emph{reducible} 
if it has a proper factor. Otherwise, we call $f$ \emph{ir{\-}reducible}}.
\label{10:48AM}\end{de}
An eta quotient $f$ is \emph{primitive} if 
it is not a rescaling of another eta quotient 
by a positive integer, i.e. 
no eta quotient $h$ and no integer $\nu>1$ satisfy the equation $f(z)=h(\nu z)$, 
for all $z\in\H$. 
Primitive and irreducible holomorphic eta quotients were first considered by Zagier, who 
conjectured \cite{z}:
\hypertarget{MF}{}
\begin{align}
&\text{\emph{There are only finitely many primitive and irreducible holomorphic}}\label{26.11.2015.A}\\[-.29em]
&\text{\emph{eta quotients of a given weight.}}\nonumber 
\end{align} 
Zagier's conjecture was proved by Mersmann in \cite{c}. 
For 
$k\in\N$, by ${\cl{M}_k}$ we denote the least common multiple of the levels of all the primitive and irreducible holomorphic eta quotients of weights less than or equal to~$k/2$ and we call it the \emph{$k$-th Mersmann level}.
Mersmann also proved\footnote{Mersmann proved the exhaustiveness of a list of fourteen primitive holomorphic eta quotients of weight 1/2 and levels 1,2,4,6 and 12, as conjectured by Zagier (see~\cite{z}).}
that $\cl{M}_1~=~12$ (see \cite{B2, c, z}). 
However, nothing except for a couple of divisibility results\footnote{obtained by extensive brute-force computations} for $\cl{M}_2$ and $\cl{M}_3$ (see Corollary~A.1 and Corollary~B.1 in \cite{B0})
are known for $\cl{M}_k$ with $k>1$. 
In the following, we conclude from Corollary~\ref{snld} that $2^{2k}|\cl{M}_k$ for all $k\in\N$. 
Corollary~\ref{12:42PM} provides a similar divisibility result in general, for all suitable primes. 

In analogy with \eqref{26.11.2015.A}, we obtained
the following result in~\cite{B3}:
\begin{ThmA}
There are only finitely many irreducible holomorphic eta quotients of a given level.
\end{ThmA}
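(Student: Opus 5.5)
Theorem~A says that for fixed $N$ there are only finitely many irreducible holomorphic eta quotients of level $N$. I will work with orders of vanishing. Write $\D$ for the set of divisors of $N$. By \eqref{27.04.2015.1}, the map $X=(X_d)_{d\in\D}\mapsto(\ord_{1/t}(f;\GN))_{t\in\D}$ is given by an invertible rational matrix $A_N$, the order matrix; it is invertible because an eta quotient on $\GN$ is determined by its orders at the cusp classes. So the exponent vectors of holomorphic eta quotients on $\GN$ are exactly the lattice points of the rational polyhedral cone
\[
C_N=\{X\in\Q^{\D} : A_NX\geq 0\}.
\]
This cone is simplicial with $|\D|$ extremal rays, namely the columns of $A_N^{-1}$. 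Under this identification, a holomorphic eta quotient $f$ is reducible exactly when its vector $X$ can be written as $X=Y+Z$ with $Y,Z$ nonzero lattice points of $C_N$. Hence the irreducible holomorphic eta quotients on $\GN$ correspond to the indecomposable elements of the monoid $C_N\cap\Z^{\D}$.

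The key step is to show that this set of indecomposables is finite, and the plan is to bound it by a fundamental parallelepiped. Since $C_N$ is a rational cone, I choose for each extremal ray the primitive integral generator $v_t\in\Z^{\D}$; each $v_t$ is itself a holomorphic eta quotient, namely a suitable integer power of the eta quotient vanishing only at the cusp class $1/t$. Every $X\in C_N\cap\Z^{\D}$ can then be written as $X=\sum_t c_tv_t$ with rational $c_t\geq0$. If some $c_t\geq1$ and $X\neq v_t$, then $X=v_t+(X-v_t)$, and $X-v_t$ is a nonzero lattice point of $C_N$. This is a nontrivial decomposition, so $X$ is reducible. Consequently every irreducible $X$ lies in the bounded parallelepiped $\{\sum c_tv_t: 0\leq c_t<1\}$ or is one of the $v_t$. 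That region contains only finitely many lattice points, and those are finitely many eta quotients on $\GN$.

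To finish, note that eta quotients of level exactly $N$ are in particular eta quotients on $\GN$, and irreducibility does not refer to the level. Therefore the finite set above contains all irreducible holomorphic eta quotients of level $N$.

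The main obstacle is the invertibility of $A_N$ together with the lattice bookkeeping. This is the step that makes $C_N$ simplicial and pointed, and that ensures the $v_t$ are genuine integral exponent vectors. If I do not cite \cite{B1, km}, I would prove invertibility directly. The matrix $\bigl(\gcd(t,d)/\lcm(t,d)\bigr)_{t,d}$ factors as a Kronecker product over the primes $p\mid N$ of the matrices $\bigl(p^{-|i-j|}\bigr)_{0\le i,j\le a_p}$. Each of these is a nonsingular Kac--Murdock--Szeg\H{o} type matrix with determinant $(1-p^{-2})^{a_p}$. The remaining requirement, that the generators be integral, only needs rationality of $A_N^{-1}$ and clearing of denominators.
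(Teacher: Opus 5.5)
Your argument is correct and follows essentially the route behind Theorem~A. It is Lemma~\ref{b2} (Lemma~2 of \cite{B3}) applied to an irreducible $\e^X$, which in particular is not factorizable on $\Gamma_0(N)$: the integral ray generators $B_N(\un\hs,t)=m_{t,N}A_N^{-1}(\un\hs,t)$ play the role of your primitive $v_t$, and finiteness comes from the boundedness of $B_N\cdot[0,1)^{\D}$.

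You should drop one overstatement. Reducibility allows proper factors of any level (cf.\ \eqref{eq1}), so ``reducible $\Leftrightarrow$ decomposable in $C_N\cap\Z^{\D}$'' is not known in general; for prime power levels it is Theorem~\hyperlink{E}{E}. Your proof, however, only uses the trivial direction: a decomposition in the monoid yields a proper factor, which is all that is needed.
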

Corollary~1 in \cite{B3} provides an upper bound on the weights of irreducible holomorphic eta quotients of level $N$
and also implies that 
the number of such eta quotients
is bounded above by $\rad(N)^{2\mathrm{d}(N)}$, where $\rad(N)$ and $\mathrm{d}(N)$ denote the 
product of distinct prime divisors and the number of divisors of~$N$, respectively. 

If a holomorphic
eta quotient $f$ has a proper factor $g$ such that both $g$ and $f/g$ are eta quotients on $\GN$, then 
$f$ is \emph{factorizable on} $\GN$.
The notion of reducibility of a holomorphic eta quotient has no apparent restriction
on the size of the levels of the factors. For example, 
\begin{equation}
\frac{\eta(z)\eta(2z)\eta(6z)}{\eta(3z)} \  = \ \frac{\eta(z)\eta(4z)\eta(6z)^2}
{\eta(2z)\eta(3z)\eta(12z)}\times\frac{\eta(2z)^2\eta(12z)}{\eta(4z)\eta(6z)},\label{eq1}\end{equation}
where a holomorphic eta quotient of weight 1 and level 6 is factored into two holomorphic eta quotients of weight 1/2 and level 12. 
It turns out that Mersmann's finiteness theorem \eqref{26.11.2015.A} implies,
the levels of all the factors of a holomorphic eta quotient $f$ are bounded above in terms of the weight and the level of $f$ (see Theorem~A in \cite{B4}):
\begin{ThmB}
For an integer $k\geq 2$, the levels of the factors of a holomorphic eta quotient $f$ of level $N$ and weight $k/2$ divide $\lcm(N, \cl{M}_{k-1}).$
\end{ThmB}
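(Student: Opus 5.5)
We want to show that if $f=g\cdot h$ with $g,h$ holomorphic eta quotients, then the level of $g$ divides $L:=\lcm(N,\cl{M}_{k-1})$. The plan is to reduce to irreducible pieces and then to a single-prime problem.

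\textbf{Weights and decomposition.} A nonconstant holomorphic eta quotient has positive weight: the only holomorphic eta quotient of weight $0$ is $1$, since its orders at the cusp classes of $\GN$ sum to $0$ by the valence formula. Hence if $g\notin\{1,f\}$ is a proper factor, both $g$ and $h=f/g$ have weights in $\{\tfrac12,\dots,\tfrac{k-1}2\}$.

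By Theorem~A, together with the fact that factoring strictly decreases the weight, $g$ is a finite product $g=\prod_i g_i$ of irreducible holomorphic eta quotients, each of weight at most $(k-1)/2$. The same holds for $h=\prod_j h_j$.

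\textbf{Rescaling to primitive forms.} Each irreducible $g_i$ is a rescaling $g_i(z)=p_i(\nu_iz)$ of a primitive eta quotient $p_i$. The form $p_i$ is again irreducible, because any factorization of $p_i$ rescales to one of $g_i$. By definition of $\cl{M}_{k-1}$, the level of $p_i$ divides $\cl{M}_{k-1}$, so the level of $g_i$ divides $\nu_i\cl{M}_{k-1}$.

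It therefore suffices to prove the following statement, prime by prime. For every prime $p$, let $a=v_p(L)$ and let $b$ be the largest exponent of $p$ occurring in the levels of all the $g_i,h_j$. Then $b\le a$.

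\textbf{The single-prime problem.} Suppose instead that $b>a$. Work on $\Gamma_0(M)$, where $M$ is the lcm of the levels of all the factors. Formula \eqref{27.04.2015.1} shows that the order matrix on $\Gamma_0(M)$ is, up to the scalar factor $M/(24\gcd(t,M/t))$, a Kronecker product over the primes dividing $M$. Its $p$-component is the matrix $\bigl(\gcd(p^s,p^r)/\lcm(p^s,p^r)\bigr)_{0\le r,s\le v_p(M)}$.

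Since $f$ has $p$-level $p^{v_p(N)}\le p^a<p^b$, the order of $f$ at a cusp $1/t$ with $v_p(t)=b$ is determined by its order at the cusp with $v_p(t)=b-1$, with the other primes fixed. One checks from the Kronecker structure that the two agree after the normalizing factors. Summing over the factors, we get
\[
\sum_i\bigl(\text{order of } g_i \text{ at the } p^b\text{-cusp}\bigr)+\sum_j\bigl(\text{order of } h_j \text{ at the } p^b\text{-cusp}\bigr)
\]
equal to a fixed linear combination of the corresponding orders at the $p^{b-1}$-cusps.

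Each individual factor, however, that genuinely has $p$-level $p^b$ should show a strict inequality in the appropriate direction. This comes from the second difference of the one-prime matrix $\bigl(p^{-|r-s|}\bigr)$ being nonzero exactly at the top index. Nonnegativity of all orders, which is holomorphy of every factor, then forces the top exponents $X_d$ with $v_p(d)=b$ to cancel factorwise. This contradicts the choice of $b$.

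\textbf{Main obstacle.} The main difficulty is the sign argument in the single-prime step: showing that the defect at the top $p$-power cannot cancel among several holomorphic factors. My first attempt would be to apply the inverse of the one-prime order matrix, which is tridiagonal, to the $p$-component. This expresses $X_d$ for $v_p(d)=b$ as a combination of orders at the $p^{b}$- and $p^{b-1}$-cusps with coefficients of definite sign.

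If that fails, I would instead use the primitive-rescaling bound directly. I would show that $v_p(\nu_i)$ cannot exceed $v_p(N)$, because the orders of $g_i$ vanish identically on cusp classes whose $p$-part is below $p^{v_p(\nu_i)}$ only if $f$ does too.
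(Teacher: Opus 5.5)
The paper gives no proof of Theorem~B; it quotes it from \cite{B4}. So your argument has to stand on its own, and it fails at the single-prime step. That step uses only $b>v_p(N)$ and never $b>v_p(\mathcal{M}_{k-1})$. If it were valid, it would show that no factor of $f$ can have a larger $p$-level than $f$ itself, which \eqref{eq1} already refutes ($N=6$, factors of level $12$).

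The false ingredient is the claimed sign-definiteness of the top ``defect''. In the one-prime case, $24\operatorname{ord}_{1/p^b}(g)$ minus the appropriate multiple of $24\operatorname{ord}_{1/p^{b-1}}(g)$ is simply a positive multiple of the top exponent $X_{p^b}$ of $g$. With several primes it is a positive multiple of the order vector of the top slice at the complementary level. Holomorphy imposes no sign on either. For example, in $\eta(z)^2=\frac{\eta(z)^2}{\eta(2z)}\cdot\eta(2z)$ both factors are holomorphic and their top exponents are $-1$ and $+1$. The top slices cancel in the sum, not factorwise, so no contradiction arises.

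The same example disposes of your fallback, since $\eta(2z)=\eta(\nu z)$ with $\nu=2\nmid N=1$. Even granting $v_p(\nu_i)\le v_p(N)$, the fact that the level of $g_i$ divides $\nu_i\mathcal{M}_{k-1}$ only bounds its $p$-adic valuation by $v_p(N)+v_p(\mathcal{M}_{k-1})$. That is weaker than the required $\max(v_p(N),v_p(\mathcal{M}_{k-1}))$.

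In fact no repair can work, because the statement as transcribed here is false. Rescale by $z\mapsto 2z$ the holomorphic eta quotients $\frac{\eta(z)\eta(4z)}{\eta(2z)}$ and $\frac{\eta(2z)^3}{\eta(z)\eta(4z)}$ listed in the introduction. This gives
\[
\eta(4z)^2=\frac{\eta(2z)\eta(8z)}{\eta(4z)}\cdot\frac{\eta(4z)^3}{\eta(2z)\eta(8z)}.
\]
By \eqref{27.04.2015.1}, the two factors have orders $\frac1{24}(3,3,0,6)$ and $\frac1{24}(1,1,8,2)$ at the cusps $1,\frac12,\frac14,\frac18$ of $\Gamma_0(8)$. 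So an eta quotient of weight $1$ and level $4$ has a factor of level $8\nmid 12=\lcm(4,\mathcal{M}_1)$. This contradicts Theorem~B for $k=2$, and also Corollary~C.

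Rescaling \eqref{eq1} itself by $z\mapsto 2z$ gives another counterexample: level $12$, with factors of level $24$. More generally, $\eta(z)^{k-2}\eta(2^jz)^2$ with $j\ge v_2(\mathcal{M}_{k-1})$ gives one for every $k\ge2$.

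In the terms of your reduction, write each irreducible factor as $g_i(z)=\tilde g_i(\nu_iz)$ with $\tilde g_i$ primitive. You correctly bound the spread of the $p$-adic valuations of the $d$ with $X_d\neq0$ in $g_i$ by $v_p(\mathcal{M}_{k-1})$. But that spread starts at $v_p(\nu_i)$, which may be positive: in the example, $v_2(\nu_i)=1$ with spread $2=v_2(\mathcal{M}_1)$. So the top valuation can exceed $\max(v_p(N),v_p(\mathcal{M}_{k-1}))$, and any correct bound of this kind must control $v_p(\nu_i)$ separately.
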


Since $\cl{M}_1=12$, we obtain the following:
\begin{corC}
The levels of the factors of a holomorphic eta quotient of weight $1$ and level $N$ divide $\lcm(N, 12)$.
\end{corC}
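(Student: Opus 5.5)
Corollary~C is the special case $k=2$ of Theorem~B, so the plan is simply to specialise that theorem and substitute the known value of the first Mersmann level.

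First, a holomorphic eta quotient $f$ of weight $1$ has weight $k/2$ with $k=2$. This satisfies the hypothesis $k\geq 2$ of Theorem~B. Hence, if $N$ is the level of $f$, the levels of all factors of $f$ divide $\lcm(N,\cl{M}_{k-1})=\lcm(N,\cl{M}_1)$.

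Second, we substitute $\cl{M}_1=12$. This is Mersmann's theorem quoted above, which rests on the exhaustiveness of Zagier's list of fourteen primitive holomorphic eta quotients of weight $1/2$, with levels $1,2,4,6,12$. So $\cl{M}_1$ is the least common multiple of these levels, namely $12$, and the bound becomes $\lcm(N,12)$.

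There is no real obstacle: all the depth lies in Theorem~B and in Mersmann's determination of $\cl{M}_1$, both of which we take as given. The only points to check are that the convention ``weight $k/2$'' matches, that is $k=2$ rather than $k=1$, and that the definition of $\cl{M}_1$ (weights $\leq 1/2$) agrees with how $\cl{M}_{k-1}$ is used in Theorem~B. Example~\eqref{eq1} serves as a sanity check: there a level-$6$ eta quotient of weight $1$ has factors of level $12=\lcm(6,12)$, so the bound is attained.
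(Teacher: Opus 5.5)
Your proposal is correct and is exactly the paper's argument: Corollary~C is obtained by specialising Theorem~B to $k=2$ and substituting $\mathcal{M}_1=12$. Your identification of $\mathcal{M}_1$ with the lcm of the levels in Zagier's list is also sound, because a holomorphic eta quotient of weight $1/2$ is automatically irreducible (the only holomorphic eta quotient of weight $0$ is $1$).
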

In particular, no factor of an eta quotient of weight 1 and level 6 can have a 
level that is 
higher than $12$ (see~\eqref{eq1}).
However, as mentioned above,
except for its existence and some divisibility results,
nothing is known about
$\cl{M}_k$ for $k>1$. Nevertheless, we do have an explicit upper bound on 
the minimum level of the proper factors of holomorphic eta quotients of a given level
(see Theorem~1 in \cite{B4}):
\begin{ThmD}
Let $f$ be a reducible holomorphic eta quotient  of level $N>1$  and
let $M$ be the minimum of the 
levels  of the proper factors of $f$. Then each prime
divisor of $M$  also divides $N$ and 
$$M<N^{2^{O(N^{2+\varepsilon})}},$$
where the implied 
constant is effectively computable. 
\end{ThmD}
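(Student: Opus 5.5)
The plan has two parts, matching the two assertions of Theorem~D. Throughout, I would write $M=M'p^m$ with $p\nmid M'$. I would use two facts. First, the order of an eta quotient at the cusp class $1/t$ is given by \eqref{27.04.2015.1}. Second, the factor $\gcd(t,d)/\lcm(t,d)$ in that formula is multiplicative over the primes. For $t=t'p^j$ and $d=d'p^a$ it splits as
$$\frac{\gcd(t',d')}{\lcm(t',d')}\cdot p^{-|a-j|}.$$
So the order matrix of level $M$ is a Kronecker product over the primes, and its $p$-component is $A_p=(p^{-|a-j|})_{0\le a,j\le m}$.

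\emph{Prime divisors of $M$.} Let $g=\prod\eta(dz)^{X_d}$ be a proper factor of minimal level $M$, and suppose some prime $p\mid M$ does not divide $N$. I would collapse the $p$-part by setting
$$\widetilde g(z)=\prod_{d'\mid M'}\eta(d'z)^{Y_{d'}},\qquad Y_{d'}=\sum_{a}X_{d'p^a}.$$
The key linear-algebra step is that $A_p$ has the tridiagonal inverse
$$(1-p^{-2})^{-1}\begin{pmatrix}1&-p^{-1}&&\\-p^{-1}&1+p^{-2}&\ddots&\\&\ddots&\ddots&-p^{-1}\\&&-p^{-1}&1\end{pmatrix}.$$
Hence the vector $c=A_p^{-1}\mathbf 1$ has strictly positive entries: $c_0=c_m=(1-p^{-1})/(1-p^{-2})$, and $c_j=(1-p^{-1})^2/(1-p^{-2})$ for $0<j<m$. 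Consequently $\sum_j c_j\,p^{-|a-j|}=1$ for every $a$. Summing $c_j$ times the order of $g$ at $1/(t'p^j)$ over $j$, with the harmless positive normalizations from \eqref{27.04.2015.1}, therefore reproduces a positive multiple of the order of $\widetilde g$ at $1/t'$. Thus $\widetilde g$ is holomorphic.

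The same computation applies to $f/g$. Because $p\nmid N$, the exponent vector of $f$ is supported on $a=0$, and $f$ collapses to itself, i.e.\ $\widetilde f=f$. So $f/\widetilde g$ is the collapse of $f/g$ and is holomorphic too. Collapsing preserves weight, and $g$ and $f/g$ both have positive weight, so $\widetilde g\notin\{1,f\}$. Hence $\widetilde g$ is a proper factor of level dividing $M'<M$, contradicting minimality.

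\emph{Size bound.} By the first part, $M=\prod_{p\mid N}p^{m_p}$, and it suffices to show that if some proper factor exists, then one exists with every $m_p$ bounded by $2^{O(N^{2+\varepsilon})}$. I would argue as follows.
\begin{itemize}
\item The proper factors of $f$ on $\Gamma_0(M)$ are exactly the nonzero integer points $X$, with $X\neq$ the exponent vector of $f$, of the rational polyhedron $\{X : 0\le A_MX\le A_Mf\}$. Here $A_M$ is the order matrix, the inequalities are taken cuspwise, and this polyhedron is a polytope since $A_M$ is invertible.
\item I would then try to shrink the depth $m_p$ one level at a time, using a modified collapse that merges only the top two $p$-levels. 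The same tridiagonal-inverse positivity should keep holomorphy, except in a bounded "boundary" situation.
\item When the merge fails, the obstruction is an explicit linear relation among the exponents. Bounding the entries of the relevant minors by Hadamard's inequality (dimension $\mathrm d(M)$, entries with denominators dividing $N$) shows that the $p$-adic depth genuinely needed is at most exponential in a quantity that is itself exponential in $\mathrm d(N)^2\ll N^{2+\varepsilon}$.
\end{itemize}
Multiplying over the primes $p\mid N$ then gives $M<N^{2^{O(N^{2+\varepsilon})}}$, with an effective constant.

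I expect this last step to be the main obstacle. It requires controlling how the integrality of the exponents interacts with the depth reduction. The natural tool is a Cramer's-rule or Hermite-normal-form bound on vertices and lattice points of the polytope above. What I would try first is to iterate the single-level merge and count how many times the denominators can grow; each failed merge should at most square the relevant determinant, which would explain the doubly exponential shape of the bound.
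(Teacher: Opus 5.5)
The paper only quotes Theorem~D (from \cite{B4}) and gives no proof, so your proposal has to stand on its own.

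\textbf{First part.} This part is correct. Take $K=(p^{-|a-j|})_{0\le a,j\le m}$ (your $A_p$). The vector $\mathbf 1^{T}K^{-1}$ has entries $\frac{p}{p+1}$ and $\frac{p-1}{p+1}$. So the full $p$-collapse is linear, preserves holomorphy and weight, and fixes $f$. It therefore yields a proper factor of level dividing $M/p^m<M$. One repair is needed: work on $\Gamma_0(\operatorname{lcm}(N,M))$ rather than $\Gamma_0(M)$, since $N$ need not divide $M$ and $f/g$ lives on the larger group. As $p\nmid N$, the $p$-depth is still $m$ and nothing else changes.

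\textbf{Second part: the depth reduction fails.} The second part is not a proof, and its mechanism fails at the first step. Merging only the top two $p$-levels does not preserve holomorphy. For $j<m-1$ the merged order functional at depth $j$ is
$\mathrm{row}_j(K)+\frac{(p-1)p^{j-m}}{1-p^{-2}}\bigl(\mathrm{row}_m(K)-p^{-1}\mathrm{row}_{m-1}(K)\bigr)$.
This has a negative coefficient, and since the rows of $K$ form a basis there is no other representation. Concretely, $\eta_2^5/(\eta^2\eta_4^2)$ is holomorphic on $\Gamma_0(4)$, but its merge $\eta_2^3/\eta^2$ has order $-1/24$ at the cusp $0$ of $\Gamma_0(2)$. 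So the failure is not a bounded boundary effect; it already happens for the simplest axial quotients.

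The Hadamard or Cramer estimate is circular as well. The polytope $\{0\le A_MX\le A_Mf\}$ has dimension $\mathrm d(M)$ and entries with denominators dividing $M$, not $N$, so it cannot bound $M$. Also, $\mathrm d(N)^2=N^{o(1)}$, so it is not the source of the $N^{2+\varepsilon}$.

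\textbf{What is missing.} You need a reduction that bounds the weight of $f$ in terms of $N$, followed by a level bound controlled by that weight.
\begin{itemize}
\item If $f$ is factorizable on $\Gamma_0(N)$, then $M\le N$ and there is nothing to prove.
\item Otherwise, Lemma~\ref{b2} and Lemma~\ref{5:26AM} show that either $f=\eta^{B_N(\cdot,t)}$ for some $t$, or $\mathcal F_N/f$ is holomorphic.
\item In both cases the weight of $f$ is at most that of $\mathcal F_N$, namely $\frac12\prod_{p^n\|N}(p-1)\bigl(2+(n-1)(p-1)\bigr)\le\frac12\,\mathrm d(N)\operatorname{rad}(N)^2=O(N^{2+\varepsilon})$. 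This matches the exponent appearing in Theorem~D.
\end{itemize}
The real content is then a bound of size $2^{O(k)}$ on $v_p(M)$ for some proper factor, where $k/2$ is the weight of $f$. This would be, for instance, an effective form of the Mersmann-type finiteness behind Theorem~B. Combined with your first part it would give $M\le\prod_{p\mid N}p^{2^{O(k)}}\le N^{2^{O(N^{2+\varepsilon})}}$.

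Your sketch supplies neither the weight reduction nor the weight-to-level bound, so the size estimate remains unproved.
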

Though the above theorem provides an 
upper bound on the minimum level of the proper factors of holomorphic eta quotients of a given level, this is
however far larger than the size speculated by Conjecture~1 in \cite{B4}, which has yet only been established for prime power levels (see Corollary~4 in \cite{B5}):
\begin{ThmE}
Each reducible holomorphic eta quotient of a prime power level $N$ has a proper factor whose level divides $N$. 
\hypertarget{E}{\color{white}{E}}\end{ThmE}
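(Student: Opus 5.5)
The plan is to reduce the level of a given proper factor step by step, working with exponent vectors and order vectors. Write $N=p^n$ and $f=\prod_{j=0}^{n}\eta(p^jz)^{X_j}$. By Theorem~D, the minimal-level proper factor $g$ of $f$ has level $M$ with $\rad(M)\mid p$, so $M=p^m$. If $m\le n$ we are done, so assume $m>n$. The aim is to construct from $g$ a proper factor $g'$ of level dividing $p^{m-1}$. Iterating this contradicts minimality of $m$ unless $m\le n$.

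For an eta quotient $h=\prod_{d=0}^{m}\eta(p^dz)^{Y_d}$ on $\Gamma_0(p^m)$, formula~\eqref{27.04.2015.1} shows that the order at the cusp class $1/p^t$ is a positive multiple of
\begin{equation}
v_t(h)=\sum_{d=0}^{m}p^{-|t-d|}\,Y_d ,\qquad 0\le t\le m.\nonumber
\end{equation}
The matrix $(p^{-|t-d|})_{t,d}$ is a Kac--Murdock--Szeg\H{o} type matrix, so its inverse is tridiagonal. Hence
\begin{equation}
Y_d=\tfrac{1}{p^2-1}\big(-p\,v_{d-1}+(p^2+1)v_d-p\,v_{d+1}\big)\nonumber
\end{equation}
for interior $d$, with modified coefficients at the two ends. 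Two consequences follow. First, integrality of $h$ amounts to a set of congruences on consecutive triples $(v_{d-1},v_d,v_{d+1})$. Second, since $f$ has level $p^n$, the numbers $v_t(f)$ for $t\ge n$ are forced to equal $p^{-(t-n)}v_n(f)$, a geometric tail.

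Next I would define $g'$ on $\Gamma_0(p^{m-1})$. I keep the order vector of $g$ at $t\le m-2$. I then choose $v_{m-1}(g')$ so that three things hold. The last tridiagonal equation, which determines the top exponent at level $p^{m-1}$, must give an integer. The inequality $0\le v_{m-1}(g')$ must hold. And $v_{m-1}(g')\le v_{m-1}(f)=p^{-(m-1-n)}v_n(f)$ must hold. The natural candidate is the value obtained by forcing the exponent of $\eta(p^{m}z)$ to vanish, that is, $v_{m-1}(g')=v_{m-1}(g)-p^{-1}(\text{correction})$, which equals a convex combination of $v_{m-2}(g)$ and $v_{m-1}(g)$. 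The admissible interval comes from holomorphy of $g$ and $f/g$ at the cusp classes $1/p^{m-1}$ and $1/p^{m}$, combined with the geometric tail of $f$. I would show this interval is nonempty by a convexity argument: for $t\ge n$, the sequence $p^{t}v_t$ is constant for $f$ and controlled for $g$ and $f/g$ by the nonnegativity of the tridiagonal expressions. The lower-indexed exponents $Y_d$ for $d\le m-3$ do not change, and $Y_{m-2}$ changes only through $v_{m-1}$, so integrality only has to be checked at the last two coordinates.

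The main obstacle is the integrality step. Nonnegativity of $g'$ and of $f/g'$ pins $v_{m-1}(g')$ to a real interval, but the denominator $p^2-1$ in the inverse matrix means the interval must contain a point of a specific residue class. I would first try to show that the interval has length at least the spacing of that class, using that $f$ is integral with a geometric tail. Failing that, I would replace the single step $m\to m-1$ by a collapse $m\to n$ done at once, using the full tail $t\ge n$.

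Finally, I must ensure $g'\notin\{1,f\}$. Since $g'$ agrees with $g$ at the cusp classes $1/p^t$ with $t\le m-2$, and $m-2\ge n-1$, it suffices that $g$ differs from both $1$ and $f$ somewhere at $t\le n-1$. If instead $g$ or $f/g$ vanishes to order zero at all $t\le n-1$, the tail structure forces that factor to be supported at high cusps only. I would then swap the roles of $g$ and $f/g$, or handle this degenerate case separately.
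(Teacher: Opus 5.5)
The paper does not prove Theorem~E; it quotes it as Corollary~4 in \cite{B5}. Your proposal therefore has to stand on its own. Its central descent step fails, and the failure has nothing to do with choosing $v_{m-1}(g')$ well.

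\textbf{The obstruction.} You require $g'$, of level dividing $p^{m-1}$, to have the same orders as $g$ at the cusp classes $1/p^t$ with $t\le m-2$. As soon as $m\ge 2$, this includes the cusp $1$. For $\eta^Y=\prod_{d=0}^{m}\eta(p^dz)^{Y_d}$, the formula for $A_{p^m}$ gives
\[
24\,\ord_{1}(\eta^Y;\Gamma_0(p^m))=\sum_{d=0}^{m}p^{m-d}Y_d\equiv Y_m \pmod p .
\]
This number is divisible by $p$ for every eta quotient of level dividing $p^{m-1}$. So your $g'$ can exist only if $p$ divides the exponent $Y_m$ of $\eta(p^mz)$ in $g$, whatever value you assign to $v_{m-1}(g')$.

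In your own notation, the conditions $Y'_{m-2},Y'_{m-1}\in\Z$ force $Y'_{m-1}=Y_{m-1}+Y_m/p-pk$ with $k\in\Z$. Integrality is therefore not a matter of finding a lattice point in an interval: when $p\nmid Y_m$ there is no admissible value at all.

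\textbf{An example.} Take $p=2$, $f=\eta(z)\eta(2z)^2$ and $g=\eta(z)\eta(4z)$, so $f/g=\eta(2z)^2/\eta(4z)$. On $\Gamma_0(4)$, at the cusps $1,1/2,1/4$, the quotient $g$ has $24\cdot$orders $(5,2,5)$ and $f/g$ has $(3,3,0)$, so both are holomorphic. Every $\eta(z)^a\eta(2z)^b$ has $24\,\ord_1=4a+2b$, which is even. Hence no $g'$ of level dividing $2$ matches $g$ at the cusp $1$, even though $f$ does have the factor $\eta$.

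Your construction never uses the minimality of $m$, so nothing forces $p\mid Y_m(g)$. Your fallback, collapsing $m\to n$ while keeping the orders at $t\le n-1$, is worse: it needs $\sum_{d>n}p^{m-d}Y_d\equiv 0\pmod{p^{m-n}}$.

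\textbf{No uniform level-lowering rule exists either.} Suppose a multiplicative rule $g\mapsto g'$ fixes eta quotients on $\Gamma_0(p^{m-1})$ and preserves holomorphy. Test it on $\eta^{B_{p^m}(\cdot,p^m)}$ and $\eta^{B_{p^m}(\cdot,p^{m-1})}$; their top exponents are $p$ and $-p$. This forces the image of $\eta(p^mz)$ to have the same orders as $\eta(p^mz)$ at every $t\le m-2$, which the congruence above forbids.

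\textbf{What is missing.} You need a replacement for $g$ that may also change the orders at the low cusps, and whose construction genuinely depends on $f$, rather than a truncation of $g$. The degenerate case of your properness check ($g'\notin\{1,f\}$) is also left unresolved.
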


Lemma~7 in \cite{B3} also provides us with a nontrivial example of an irreducible holomorphic eta quotient of a given level, e.g. 
\begin{equation}
    \frac{\left(\eta(p^{m-2}q^{n-2}z)\hspace*{1pt}\eta(p^{m-2}q^nz)\hspace*{1pt}\eta(p^mq^{n-2}z)\hspace*{1pt}\eta(p^mq^nZ)\right)^{pq}\eta(p^{m-1}q^{n-1}z)^{(p^2+1)(q^2+1)}}
{\left(\eta(p^{m-2}q^{n-1}z)\hspace*{1pt}\eta(p^mq^{n-1}z)\right)^{p(q^2+1)}\left(\eta(p^{m-1}q^{n-2}z)\hspace*{1pt}\eta(p^{m-1}q^nz)\right)^{q(p^2+1)}}
\nonumber\end{equation}
is an irreducible holomorphic eta quotient of level $p^mq^n$, for all primes $p,q$
and for all integers $m,n\geq2$. 
The above eta quotient 
vanishes only at the cusp class of $\Gamma_0(p^mq^n)$ which is represented by $1/(p^{m-1}q^{n-1})$. 
This is an example of an \emph{axial} eta quotient, which we define in the following.

\begin{de}[\emph{Axial, generic and cuspidal eta quotients}]
\textnormal{A holomorphic eta quotient on $\Gamma_0(N)$ is called axial
if it vanishes only at a single cusp class 
of $\Gamma_0(N)$.
Else, it is called a generic holomorphic eta quotient on $\Gamma_0(N)$. If an eta quotient $f$ on $\GN$ vanishes at all the cusps of $\GN$, then $f$ is called a cuspidal eta quotient.}
\end{de}
For example, the axial irreducible holomorphic eta quotients on $\Gm_0(4)$ are
$$\frac{\eta(z)^2}{\eta(2z)}\hs,\hs\frac{\eta(2z)^5}{\eta(z)^2\eta(4z)^2}\hs\ \text{and} \ \hs\frac{\e(4z)^2}{\e(2z)}$$
which vanish only at the cusp classes of 1, 1/2 and 1/4 of $\Gm_0(4)$, respectively, whereas
the generic irreducible holomorphic eta quotients on $\Gm_0(4)$ are 
$$\eta(z)\hs,\e(2z)\hs,\e(4z)\hs,\hs\frac{\eta(2z)^3}{\eta(z)\eta(4z)}\hs,\hs\frac{\eta(2z)^2}{\eta(z)}\hs,\frac{\eta(2z)^2}{\eta(4z)}\hs \text{and} \ \hs\frac{\eta(z)\eta(4z)}{\eta(2z)},$$
among which only the first four are cuspidal.
For $N\in\N$, all possible axial
irreducible holomorphic eta quotients on $\GN$
are known (see Lemma~6 and Lemma~7 in \cite{B3}).
In Corollary~\ref{17:02PM}, we obtain that
a~generic irreducible holomorphic eta quotient on $\GN$ is uniquely identified by
its orders of vanishing only at a
subset of the  cusp classes~of~$\GN$.

\begin{de}[Special set of cusp classes] 
\textnormal{
Given an integer $N>1$, let 
$N=\prod_{p|N}p^{n_p}$ be the prime factorization of $N$. Let $\D$ denote the set of all divisors of $N$.
Let $\{\alpha_p\}_{p|N}$
be a sequence with $\alpha_p\in\{0,1\}$ for all $p$
such that $\alpha_p=1$ if $n_p=1$. Let
\begin{equation}
 \Lambda_N:=\Bigg\{\prod_{p|N}p^{\beta_p}\in\D\ : \text{ 
 $\exists 
 \ p_0|N$ with} \ \beta_{p_0}\in\{\alpha_{p_0}, n_{p_0}\}\Bigg\}.
\label{12:00PM}\end{equation}
A special set of cusp classes of $\GN$ contains only the cusp classes represented by the reciprocals of the elements of $\Lambda_N$.
}
\label{9:22AM}\end{de}

\begin{table}[H]
\centering
\caption{\mbox{Cardinalities of the sets of unique identifiers of modular objects}\\}
\begin{threeparttable}
  \begin{tabular}{| c | c | c |} 
\hline&&\\
\textbf{Modular objects}
&\makecell{\textbf{Unique} \\ \textbf{identifiers}}&\makecell{\textbf{Cardinality}\\\textbf{of the set}\\\textbf{of identifiers}}
\\\hline
\makecell{A modular form $f$ of\\ weight $k$ and level $N$} &\makecell{The $q$-series coefficients of $f$ \\ up to the Strum bound}
&\makecell{\\ $\displaystyle{\bigg{\lfloor}\frac{k[\operatorname{SL}_2(\Z):\GN]}{12}\bigg{\rfloor}}$\\ \\$\displaystyle{=\bigg{\lfloor}\frac{k}{12}\prod_{p\idi N}(p^{n_p}+p^{n_p-1})\bigg{\rfloor}}$}
\\\hline
\makecell{A normalized \\ modular form $f$\\ of level $N$}  &\makecell{The orders of vanishing of $f$ \\ at the cusps of $\GN$} &  \makecell{\\ $\displaystyle{\sum_{d\idi N}\varphi((d,N/d))}$\\ \\
$\displaystyle{=\prod_{p\idi N}(p^{\lfloor\frac{n_p}{2}\rfloor}+p^{\lceil\frac{n_p}{2}\rceil-1})}$}
\\\hline
\makecell{An eta quotient $f$\\ of level $N$}  &\makecell{\\ The orders of vanishing of $f$ \\ 
at the cusp classes of $\GN$ \\ 
\\} &$\displaystyle{\prod_{p\idi N}(n_p+1)}$
\\\hline
\makecell{A generic irreducible \\holomorphic eta quotient $f$ \\ of level $N$}&\makecell{ \\ The orders of vanishing of $f$\\at a special set of cusp\tnote{*} \\ classes of $\GN$ \\ \\ }&\makecell{$\displaystyle{\prod_{p\idi N}(n_p+1)-\prod_{\substack{p\idi N\\ n_p>1}}(n_p-1)}$}
\\
\hline
\makecell{\\ A generic irreducible \\holomorphic eta quotient $f$ \\ which is an Atkin-Lehner \\ eigenform of level $N$\tnote{$\dagger$}\\ \ }&\makecell{ \\ The orders of vanishing of $f$\\at a special set of cusp \\ classes of $\GN$\\modulo the Atkin-Lehner\\involutions of level $N$\\ \ }&\makecell{$\displaystyle{\underset{p|N}{\prod}\left\lceil\frac{(n_p+1)}2\right\rceil-\underset{p|N}{\prod}\left\lceil\frac{(n_p-1)}2\right\rceil}$}
\\
\hline
\makecell{ \\ A generic irreducible \\holomorphic eta quotient $f$\\ of a prime power level $N$ \\ \ }&\makecell{The orders of vanishing of $f$\\
at the cusps 0 and $\infty$\tnote{\#}}&$2$\\
\hline
\makecell{ \\ A generic irreducible \\holomorphic eta quotient $f$\\ which is 
a Fricke eigenform\\ of a prime power level $N$
\\ \ }&\makecell{The order of vanishing of $f$\\
at
$\infty$\tnote{\S} }&1\\
\hline
\end{tabular}
\begin{tablenotes}
\footnotesize
\item[*] See Theorem~\ref{sth}.
\item[$\dagger$] Such irreducible holomorphic eta quotients do not exist unless $n_p>1$
for each prime $p|N$.
\item[\#]Instead of 0, we may also take $1/p$.
\item[\S]Instead of $\infty$, here we may take $0$ or $1/p$.
\end{tablenotes}
\end{threeparttable}
\end{table}

\section{The main results}
\label{mr}
\begin{theorem}Let $S_N$ be a special set of cusp classes of $\GN$. 
Each generic holomorphic eta quotient on $\GN$ which is not factorizable on $\GN$
is uniquely identified by its orders of vanishing at the cusp classes in $S_N$.
\label{sth}\end{theorem}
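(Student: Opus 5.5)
The plan is to work entirely with order vectors. By \eqref{27.04.2015.1}, the map $X=(X_d)_{d\in\D}\mapsto(\ord_{1/t}(f;\GN))_{t\in\D}$ is linear and injective (\cite{B1, km}). Up to the scalar factors $N/(24\gcd(t,N/t))$, it is given by the Kronecker product over $p\mid N$ of the local order matrices $A_{p^{n_p}}$, since $\gcd(t,d)/\lcm(t,d)$ is multiplicative in $t$ and $d$. Let $L\subset\Z^{\D}$ denote its image, the lattice of order vectors of eta quotients on $\GN$. Then holomorphic eta quotients on $\GN$ correspond to $L\cap\Z_{\ge0}^{\D}$, and ``not factorizable on $\GN$'' means that the order vector is not a sum of two nonzero elements of $L\cap\Z_{\ge0}^{\D}$. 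Put $T:=\D\setminus\Lambda_N$, i.e.\ the divisors $\prod_p p^{\beta_p}$ with $\beta_p\notin\{\alpha_p,n_p\}$ for every $p$. Suppose $f,g$ are generic holomorphic eta quotients on $\GN$, neither factorizable on $\GN$, with the same orders on $S_N$. Then the order vector $y$ of $f/g$ lies in $L$ and is supported on $T$. We must show $y=0$.

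\textbf{Step 1 (the lattice $L_T:=L\cap\Z^{T}$).} I would compute $A_{p^n}^{-1}$ explicitly; it is tridiagonal, which is the standard computation behind Lemmas~6 and~7 of \cite{B3}. Then $A^{-1}$ is the Kronecker product of these tridiagonal matrices, with the diagonal rescaling put back. For $y$ supported on $T$, the condition $A^{-1}y\in\Z^{\D}$ can then be written out coordinatewise. The aim is to prove that
$$L_T=\bigoplus_{t\in T}\Z\, m_t e_t ,$$
where $m_t>0$ is minimal with $m_te_t\in L$. Thus every element of $L$ supported on $T$ is a sum of independent multiples of the axial order vectors $m_te_t$. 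This is exactly where the choice of $\Lambda_N$ should enter. In each local factor, $T$ avoids the endpoint $n_p$ and one of $\{0,1\}$, namely $\alpha_p$, which is forced to be $1$ when $n_p=1$. The boundary rows of $A_{p^n}^{-1}$, which carry the extra denominators of the form $p^2-1$, therefore never interact with two elements of $T$ simultaneously. This should make the integrality constraints decouple across the points of $T$.

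\textbf{Step 2 (reduction to a box).} For $t\in T$, the vector $m_te_t$ is the order vector of a holomorphic axial eta quotient $e_t$ on $\GN$, vanishing only at the cusp class of $1/t$. If $\ord_{1/t}(f)\ge m_t$, then $f/e_t$ is holomorphic on $\GN$. Moreover $f\neq e_t$, because $f$ is generic and $e_t$ is axial, so $f$ would be factorizable on $\GN$. Hence $0\le\ord_{1/t}(f)<m_t$ for all $t\in T$, and the same holds for $g$.

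\textbf{Step 3 (conclusion).} By Step~1, $y=\sum_t c_tm_te_t$ with $c_t\in\Z$. So $\ord_{1/t}(f)-\ord_{1/t}(g)=c_tm_t$, while by Step~2 both orders lie in $[0,m_t)$. This forces $c_t=0$ for every $t$, hence $y=0$ and $f=g$ by injectivity.

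\textbf{Main obstacle.} Step~1 is the crux. If the exact direct-sum decomposition fails, I would fall back to a weaker statement that still suffices. This is the claim that every nonzero $y\in L_T$ has a coordinate $t$ with $y_t\ge m'_t$, where $m'_te_t$ is realized by some holomorphic eta quotient on $\GN$ other than $f$ and $g$. Applied to $y$ or to $-y$, this again produces a holomorphic proper factor of $f$ or of $g$ on $\GN$, giving the contradiction.
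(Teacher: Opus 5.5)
Your outer argument is the paper's. Step~2 is Lemma~\ref{b2} together with Corollary~\ref{9:58AM}: in the normalized coordinates $Y=B_N^{-1}X$, whose $t$-entry is $24\,\ord_{1/t}(\e^X)/m_{t,N}$, non-factorizability puts $Y$ in $[0,1)^{\D}$. Step~3 is the last line of the proof of Lemma~\ref{00:36AM}. Step~1 is true, but it is not a preliminary; it is the entire content of the theorem. With $L=A_N\ZD$, it says that an element of $B_N^{-1}\ZD$ that is integral at every $t\in\Lambda_N$ is integral everywhere. The paper proves exactly this using three ingredients: the Smith form $D_N=U_NB_NV_N$ with $D_{p^n}=\operatorname{diag}(1,\dots,1,p^{n-1},p^{n-1}(p^2-1))$, the explicit $V_{p^n}$ of \S\ref{mi}, and the unimodularity of the block $E_N$ of $V_N$ with rows $\Lambda_N$ and columns $\Lambda'_N$ (Lemma~\ref{1:36AM}, by induction on the number of primes). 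You state Step~1 only as an aim and support it with a heuristic. As written, the proposal reduces the theorem to its key lemma and stops there.

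The heuristic would not survive being written out. By \eqref{apninv}, every row of $A_{p^n}^{-1}$ has $p^2-1$ in its denominator, not just the boundary rows. What is special about the rows indexed by $1$ and $p^n$ is only that each meets $T_p$ in at most one column, and even then they give only partial information. For example, for $\alpha_p=0$ the row indexed by $1$ yields $p^{n-2}(p^2-1)\mid y_p$, whereas $m_{p,p^n}=p^{n-1}(p^2-1)$. The missing factors must come from the interior rows, each of which couples up to three consecutive elements of $T_p$. So you need a genuine propagation argument, done separately for the $p$-part and the $(p^2-1)$-part and for both values of $\alpha_p$.

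For several primes, $T=\prod_pT_p$ while $L$ is a Kronecker product, and writing $A_N^{-1}y\in\ZD$ out coordinatewise does not by itself show that $L\cap\Z^T$ splits. That reduction can be done, but you have to supply it. For coprime $N=N_1N_2$, view $y$ as a matrix $Y=A_{N_1}XA_{N_2}^{\top}$ supported on $T_{N_1}\times T_{N_2}$. Each row of $A_{N_1}^{-1}Y=XA_{N_2}^{\top}$ lies in $A_{N_2}\Z^{\bb{D}_{N_2}}\cap\Z^{T_{N_2}}$, so $A_{N_1}^{-1}YM_{N_2}^{-1}$ is integral. Then each column of $YM_{N_2}^{-1}$ lies in $A_{N_1}\Z^{\bb{D}_{N_1}}\cap\Z^{T_{N_1}}$. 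By multiplicativity of $m_{t,N}$, this reduces everything to prime powers, and the prime-power case still has to be proved.

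Finally, your fallback is not a weaker statement. Since $L\cap\Z e_t=m_t\Z e_t$, any admissible $m'_t$ is a multiple of $m_t$. Reducing an arbitrary $y\in L_T$ modulo the vectors $m_te_t$ then shows that the fallback is equivalent to Step~1.
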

\begin{co}
A generic irreducible holomorphic eta quotient\footnote{In particular, each irreducible cuspidal eta quotient on $\GN$ is uniquely identified by its orders of vanishing at only
$\lfloor2(\log_2 N)^{r-1}\rfloor$ cusps of $\GN$.} on $\GN$ 
is uniquely identified by its orders of vanishing at only
$\lfloor2(\log_2 N)^{r-1}\rfloor$ cusps
of $\GN$, where 
$r$ denotes the number of distinct prime divisors of $N$.
\label{17:02PM}\end{co}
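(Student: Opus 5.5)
We will deduce the corollary from Theorem~\ref{sth} by choosing the sequence $\{\alpha_p\}$ in Definition~\ref{9:22AM} well and then counting. The order of vanishing of an eta quotient at a cusp depends only on its cusp class (see \eqref{27.04.2015.1}). So it suffices to take one cusp from each class in $S_N$, namely the cusp $1/t$ for each $t\in\Lambda_N$. The number of cusps needed is therefore $|\Lambda_N|$. By Theorem~\ref{sth}, a generic irreducible holomorphic eta quotient on $\GN$ (which in particular is not factorizable on $\GN$) is determined by its orders at these cusps. The footnote about cuspidal eta quotients then follows: an irreducible cuspidal eta quotient is not axial when $N>1$ has at least two cusp classes, and the case $N=1$ is trivial.

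First I would compute $|\Lambda_N|$. Write $N=\prod_{i=1}^r p_i^{n_i}$. For $n_i>1$ I choose $\alpha_{p_i}=0$, and for $n_i=1$ the definition forces $\alpha_{p_i}=1$. The complement of $\Lambda_N$ in $\D$ consists of the divisors with $\beta_{p}\notin\{\alpha_p,n_p\}$ for every $p$. For $n_p=1$ this leaves only $\beta_p=0$; for $n_p>1$ it leaves $n_p-1$ values. Hence
\[
|\Lambda_N|=\prod_{i}(n_i+1)-\prod_{n_i>1}(n_i-1)\le\prod_i(n_i+1)-\prod_i(n_i-1),
\]
which agrees with the table. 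A telescoping identity, replacing one factor at a time, then gives
\[
\prod_i(n_i+1)-\prod_i(n_i-1)=2\sum_{i=1}^r\ \prod_{j<i}(n_j-1)\prod_{j>i}(n_j+1).
\]

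The remaining step, and the main obstacle, is the elementary inequality $|\Lambda_N|\le 2(\log_2N)^{r-1}$. Put $x_j=n_j\log_2p_j$, so that $\log_2N=\sum_jx_j$. The multinomial expansion of $(\sum_jx_j)^{r-1}$ contains, for each $i$, the distinct monomial $(r-1)!\prod_{j\ne i}x_j$, and all other terms are nonnegative. It therefore suffices to show, term by term,
\[
\prod_{j<i}(n_j-1)\prod_{j>i}(n_j+1)\le (r-1)!\prod_{j\ne i}n_j\log_2p_j .
\]
I would order the primes so that $p_r$ is the largest, and use $n+1\le 2n$ together with the facts that the $p_j$ are distinct and at most one equals $2$. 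For $r\ge3$ the factor $(r-1)!\prod_{j\ne i}\log_2p_j$ absorbs the powers of $2$. The tight cases are $r=1$, which is exactly $2\le2$, and $r=2$, say $N=2^{n_1}p^{n_2}$. In those cases the term-wise bound can fail, for instance when $n_2=1$. I would then drop the crude $\le$ above and use the exact count: if some $n_i=1$, the subtracted product is not the full $\prod(n_i-1)$, and the $(n_1-1)$ factors give extra slack. Failing that, I would check the few remaining small cases directly, e.g. $N=6$ gives $|\Lambda_N|=4\le 2\log_26$. Finally, since $|\Lambda_N|$ is an integer, the bound $|\Lambda_N|\le2(\log_2N)^{r-1}$ gives $|\Lambda_N|\le\lfloor2(\log_2N)^{r-1}\rfloor$.
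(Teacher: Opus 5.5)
Your reduction matches the paper's. You apply Theorem~\ref{sth}, take one cusp per class of $\Lambda_N$, count $|\Lambda_N|=\prod_p(n_p+1)-\prod_{p^2\mid N}(n_p-1)\le\prod_p(n_p+1)-\prod_p(n_p-1)$, and bound this by $2(\log_2N)^{r-1}$. You differ only in that last inequality.

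The paper proves $\prod_p(n_p+1)-\prod_p(n_p-1)\le2(\sum_pn_p)^{r-1}$ by induction on $r$, using the exponents alone. Only afterwards does it invoke $\sum_pn_p\le\log_2N$, so it needs no information about which primes divide $N$ and no case split. Your telescoping decomposition is asymmetric. The $i=1$ term $\prod_{j>1}(n_j+1)$ can be as large as $2^{r-1}\prod_{j>1}n_j$, which exceeds $(r-1)!\prod_{j>1}n_j$ for $r=2,3,4$. That is what forces you to borrow the weights $\log_2p_j$, to order the primes, and to treat $r=2$ separately. At $r=3$ the ordering genuinely matters: with all $n_j=1$, a choice $\{p_2,p_3\}=\{2,3\}$ would violate the needed $\log_2p_2\log_2p_3\ge2$. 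Your choice of $p_3$ as the largest prime avoids this, so your $r\ge3$ argument does go through.

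Your $r=2$ discussion overcomplicates matters and is slightly inaccurate. The termwise bound fails on the infinite family $N=2^{n_1}\cdot3$, not on a few small cases. Still, nothing needs checking: for $r=2$ the crude quantity equals $2(n_1+n_2)\le2\log_2N$ exactly. Equivalently, your two terms sum to $n_1+n_2\le x_1+x_2$.

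The symmetric expansion shows why the unweighted route works with no case analysis. Write $\prod_i(n_i+1)-\prod_i(n_i-1)=2\sum_{k\ge0}e_{r-1-2k}(n_1,\ldots,n_r)$, with $e_m$ the elementary symmetric polynomials. Then $e_{r-1}$ sits under the square-free part of $(\sum_in_i)^{r-1}$. Each monomial of a lower $e_{r-1-2k}$ injects into a distinct non-square-free monomial $n_a^{2k}\prod_{j\in S}n_j$ of $(\sum_i n_i)^{r-1}$, and dominates nothing larger because $n_i\ge1$.

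Finally, a small slip: by your own formula (empty product $=1$), $|\Lambda_6|=4-1=3$, not $4$. This does not affect the conclusion.
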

\begin{co}\label{ppi}
A generic irreducible holomorphic eta quotient of a prime power level is uniquely identified by its orders of vanishing
at the cusps $0$ and $\infty$.
\end{co}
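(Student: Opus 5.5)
Corollary~\ref{ppi} is essentially the one-prime case of Theorem~\ref{sth}, combined with Theorem~E to pass from irreducibility to non-factorizability on $\GN$. Let $N=p^n$ be the level of a generic irreducible holomorphic eta quotient $f$.

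\emph{Step 1: reduce to non-factorizability.} Since $f$ is irreducible, it has no proper factor at all. In particular, it has no proper factor $g$ for which both $g$ and $f/g$ are eta quotients on $\GN$. So $f$ is not factorizable on $\GN$, and $f$ is also generic on $\GN$. Theorem~E is not strictly needed for this direction. It would only matter if we wanted to compare $f$ against eta quotients that are merely non-factorizable on $\GN$; Theorem~\ref{sth} already covers that larger class, so it suffices.

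\emph{Step 2: compute the special set.} With $r=1$ there is a single prime $p$ and exponent $n_p=n$. If $n=1$, then $\alpha_p=1$ is forced, and $\Lambda_N=\{p^\beta:\beta\in\{1,1\}\}=\{p\}$. Hence $S_N$ consists of the cusp class of $1/p$, i.e.\ the class of $\infty$. But for $n=1$ there are only two cusp classes, $0$ and $\infty$, so the cusps $0$ and $\infty$ certainly suffice. (A single cusp may even suffice, but that is not needed.)

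If $n\ge 2$, choose $\alpha_p=0$. Then $\Lambda_N=\{1,p^n\}$, and the special set consists of the cusp classes of $1/1=0$ and $1/p^n\sim\infty$. Each of these classes contains a single cusp, since $\gcd(t,N/t)=1$ for $t\in\{1,N\}$. The choice $\alpha_p=1$ instead gives $\{1/p,\infty\}$, which matches the table's footnote.

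\emph{Step 3: conclude.} By Theorem~\ref{sth}, $f$ is uniquely identified, among generic holomorphic eta quotients on $\GN$ that are not factorizable on $\GN$, by its orders at the cusp classes in $S_N$. These are exactly its orders at the cusps $0$ and $\infty$. Since every generic irreducible holomorphic eta quotient of level $N$ lies in that class, the statement follows.

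The only delicate point is the meaning of \emph{uniquely identified}, specifically the class of competitors. Two generic irreducible eta quotients of level $p^n$ and $p^m$ with $m\le n$ both live on $\Gamma_0(p^n)$. Their orders at $0$ and $\infty$ on $\Gamma_0(p^n)$ relate to those on the smaller group by the scaling factor in \eqref{27.04.2015.1}. So I would state the comparison on a common group $\Gamma_0(p^n)$ and check that genericity survives this lifting; this bookkeeping is where I expect care to be needed.
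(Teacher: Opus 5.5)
Your argument is correct and is the paper's own proof: apply Theorem~\ref{sth} with the special set $\{1,1/p^n\}$ and use $1\sim 0$, $1/p^n\sim\infty$ on $\Gamma_0(p^n)$. Your separate treatment of $n=1$ is slightly more careful than the paper, since there $\alpha_p=1$ is forced and $\Lambda_p=\{p\}$, whereas the paper asserts without case distinction that $\{1,1/p^n\}$ is special. The worry in your last paragraph dissolves once the comparison is made on $\Gamma_0(p^n)$, as the paper implicitly does. A quotient of level $p^m\mid p^n$ that is generic on $\Gamma_0(p^m)$ stays generic on $\Gamma_0(p^n)$, because the cusp classes of $\Gamma_0(p^n)$ map onto those of $\Gamma_0(p^m)$ and the orders are only multiplied by positive ramification factors.
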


A divisor $\delta$ 
\emph{unitarily divides} $N$ (denoted by $\delta\|N$) if $\gcd(\delta,N/\delta)=1.$ 
For an unitary divisor $\delta$ of $N$, 
the \emph{Atkin-Lehner involution}\footnote{see \cite{al} and \S1.6 in \cite{B0}.} $\mathcal{AL}_{\delta,N}$ of the eta quotients on $\GN$
is defined by
\begin{equation}
\mathcal{AL}_{\delta,N}\Big{(}\prod_{d|N}\eta(dz)^{X_d}\Big{)}  \  := \  \prod_{d|N}\eta\Big(\frac{\lcm(d,\delta)}{\gcd(d,\delta)}z\Big)^{X_d}.
\label{11:59AM}\end{equation} 
Atkin-Lehner involutions preserve 
 holomorphy and cuspidality. 
The Atkin-Lehner involution $\mathcal{AL}_{N,N}$
is called the \emph{Fricke involution} of level $N$.
An eta quotient of level~$N$ which is invariant 
under the Fricke involution of level $N$ (resp. all the Atkin-Lehner involutions of level $N$) is called a \emph{Fricke eigenform} (resp. an \emph{Atkin-Lehner eigenform}) of level~$N$.
\begin{co}
A generic irreducible holomorphic eta quotient 
which is also an Atkin-Lehner eigenform of level $N={\prod_{p|N}}p^{n_p}$,
is uniquely determined by its orders of vanishing at only
$$\underset{p|N}{\prod}\left\lceil\frac{(n_p+1)}2\right\rceil-\underset{p|N}{\prod}\left\lceil\frac{(n_p-1)}2\right\rceil$$
cusps of $\GN$. 
\label{a-l}\end{co}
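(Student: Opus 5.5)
The plan is to combine Theorem~\ref{sth} with the symmetry coming from the Atkin-Lehner involutions. Fix a special set $S_N$ with the choice $\alpha_p=1$ for every $p$ with $n_p>1$ (and necessarily $\alpha_p=1$ when $n_p=1$). A generic irreducible holomorphic eta quotient $f$ is in particular not factorizable on $\GN$. So by Theorem~\ref{sth} it is determined by its orders at the cusp classes $1/t$ with $t\in\Lambda_N$. It therefore suffices to show that, when $f$ is an Atkin-Lehner eigenform, these orders are determined by the orders at a set of representatives of $\Lambda_N$ modulo the action of the Atkin-Lehner involutions. Then we count these representatives.

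First I would check how $\mathcal{AL}_{\delta,N}$ acts on orders at cusp classes. Write $f=\prod\eta(dz)^{X_d}$ and use \eqref{27.04.2015.1}. The formula depends on $d$ and $t$ only through the multiplicative functions $\gcd(t,d)/\lcm(t,d)$ and $\gcd(t,N/t)$, so it factors over the primes. Moreover, replacing $d$ by $\lcm(d,\delta)/\gcd(d,\delta)$ amounts, at each $p\mid\delta$, to the exponent reflection $\beta\mapsto n_p-\beta$. Hence
\[
\ord_{1/t}(\mathcal{AL}_{\delta,N}f;\GN)=\ord_{1/t'}(f;\GN),
\]
where $t'$ is obtained from $t$ by replacing $\beta_p$ with $n_p-\beta_p$ for each $p\mid\delta$. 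So for an Atkin-Lehner eigenform, the order at $1/t$ depends only on the orbit of the exponent vector $(\beta_p)$ under the group $(\Z/2)^r$ of independent reflections $\beta_p\mapsto n_p-\beta_p$. We also need that $\Lambda_N$ is stable under this group. With $\alpha_p=1$ the allowed values $\{1,n_p\}$ are not reflection-stable; they map to $\{n_p-1,0\}$. I would handle this by letting $\alpha_p$ vary too: $\alpha_p=0$ gives the set $\{0,n_p\}$, which is stable. The set cut out by a condition of the form ``some $\beta_p$ lies in $\{0,1,n_p-1,n_p\}$'' is stable, and its orbit count should give the formula. The key observation is that Theorem~\ref{sth} holds for every admissible choice of $\{\alpha_p\}$. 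So the orders of $f$ at all cusp classes lying in the union of the $\Lambda_N$'s over the admissible choices, taken modulo reflections, are what matter. Choosing one representative per orbit then determines $f$.

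For the count, I would count orbits of the complement. For each $p$, the exponents $\beta_p\in\{0,\dots,n_p\}$ have $\lceil (n_p+1)/2\rceil$ orbits under reflection. This gives $\prod_p\lceil (n_p+1)/2\rceil$ orbits in total, since the group acts coordinatewise. The vectors excluded from the special set are those with $\beta_p\in\{1,\dots,n_p-1\}$ for all $p$ (in the reflection-stable version). That set is itself reflection-stable with $\prod_p\lceil (n_p-1)/2\rceil$ orbits per the same computation. Subtracting gives the stated number. I would verify that orbits split along this dichotomy, which holds because the excluded set is a product of reflection-stable intervals.

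The main obstacle is the compatibility just flagged: the special set in Definition~\ref{9:22AM} uses only one endpoint choice $\{\alpha_p,n_p\}$, which is not reflection-symmetric. I would first try to show directly that the orbit representatives can always be chosen inside one fixed $\Lambda_N$. For example, pick in each coordinate orbit $\{\beta,n_p-\beta\}$ the representative containing $n_p$ or $\alpha_p$ whenever possible. Then I would check that any vector with some coordinate in $\{0,1,n_p-1,n_p\}$ has an orbit mate with that coordinate in $\{\alpha_p,n_p\}$. This works coordinatewise because reflections act independently: $0\leftrightarrow n_p$, and $1\leftrightarrow n_p-1$ with $\alpha_p=1$. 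So the orbits meeting $\Lambda_N$ are exactly those in the symmetric set. This also reconciles the count, since the excluded orbits are those with $2\le\beta_p\le n_p-2$ modulo reflection in every coordinate, and these number $\prod_p\lceil (n_p-1)/2\rceil$ after the appropriate shift. This index bookkeeping is where I would spend the care.
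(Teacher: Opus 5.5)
You have the right ingredients: Theorem~\ref{sth}, the reflection action $\beta_p\mapsto n_p-\beta_p$ of the Atkin--Lehner group on cusp classes, and the coordinatewise orbit count. But the route you finally commit to does not give the stated number.

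If you fix $\alpha_p=1$, or take the union of the $\Lambda_N$ over all admissible $\{\alpha_p\}$, the Atkin--Lehner saturation of $\Lambda_N$ is the set of exponent vectors with some $\beta_p\in\{0,1,n_p-1,n_p\}$. Its complement, with every $\beta_p\in\{2,\dots,n_p-2\}$, has $\prod_p\lceil (n_p-3)/2\rceil$ orbits, not $\prod_p\lceil (n_p-1)/2\rceil$. No ``shift'' reconciles these, since each factor drops by exactly one. So your method needs $\prod_p\lceil (n_p+1)/2\rceil-\prod_p\lceil (n_p-3)/2\rceil$ cusps, which is strictly more than claimed whenever all $n_p\ge 2$.

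Concretely, for $N=p^3$ the statement asks for one cusp. With $\alpha_p=1$ you get $\Lambda_{p^3}=\{p,p^3\}$, which meets both orbits $\{1,p^3\}$ and $\{p,p^2\}$, so you would use two. Taking unions over admissible choices only enlarges the set of cusps, since Theorem~\ref{sth} for a single choice already determines $f$. So that ``key observation'' cannot help, and your claim that the set ``some $\beta_p\in\{0,1,n_p-1,n_p\}$'' has the right orbit count is false.

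The obstacle you flag is not real. For $n_p\ge 2$, the choice $\alpha_p=0$ is admissible in Definition~\ref{9:22AM}, and you noted yourself that $\{0,n_p\}$ is reflection-stable. So with $\alpha_p=0$ for all such $p$, $\Lambda_N$ is itself a union of Atkin--Lehner orbits, and its complement is exactly $\{1,\dots,n_p-1\}$ in every coordinate. Your own counting paragraph then yields the formula verbatim, and that is the paper's proof.

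What remains is the case $n_p=1$, where $\alpha_p=0$ is not allowed. The paper handles it by noting that invariance under $\mathcal{AL}_{p,N}$ with $p\| N$ forces $f(z)=g(z)g(pz)$ with $g$ holomorphic. Hence no irreducible Atkin--Lehner eigenform exists at such levels, and the statement is vacuous there. Alternatively, the forced set $\{1\}$ saturates to $\{0,1\}$, so that coordinate excludes nothing, matching $\lceil 0/2\rceil=0$. The count is then simply the total number of orbits, which trivially suffices.
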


\begin{co}Let $f$ be a generic irreducible holomorphic eta quotient and a Fricke eigenform
of a prime power level. Then $f$ is uniquely determined by its order of vanishing
at $\infty$.
\label{fi}\end{co}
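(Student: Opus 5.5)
The plan is to reduce Corollary~\ref{fi} to Corollary~\ref{ppi}. The point is that the Fricke involution interchanges the orders of vanishing at the cusps $0$ and $\infty$. So for a Fricke eigenform, knowing the order at $\infty$ already gives the order at $0$.

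\textbf{Step 1 (Fricke involution permutes the cusp classes).} Let $N=p^n$ and $f=\prod_{d|N}\eta(dz)^{X_d}$. By \eqref{11:59AM} with $\delta=N$, we have $\lcm(d,N)/\gcd(d,N)=N/d$, so
$$\mathcal{AL}_{N,N}(f)=\prod_{d|N}\eta(dz)^{X_{N/d}}.$$
I will show that for every $t|N$,
$$\ord_{1/t}(\mathcal{AL}_{N,N}(f);\GN)=\ord_{1/(N/t)}(f;\GN).$$
Substitute $e=N/d$ in \eqref{27.04.2015.1}. It suffices to check that
$$\frac{\gcd(t,N/e)}{\lcm(t,N/e)}=\frac{\gcd(N/t,e)}{\lcm(N/t,e)}.$$
This holds prime by prime: for exponents $a,b\le n$, the ratio $\gcd/\lcm$ equals $p^{-|a-b|}$, and $|a-b|=|(n-a)-(n-b)|$. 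The prefactor $N/(24\gcd(t,N/t))$ is symmetric under $t\mapsto N/t$. Taking $t=1$ and $t=N$ (the cusps $1/1\sim 0$ and $1/N\sim\infty$) shows that the involution swaps $\ord_0$ and $\ord_\infty$.

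\textbf{Step 2 (consequence for eigenforms).} If $f$ is a Fricke eigenform of level $N$, then $\mathcal{AL}_{N,N}(f)=f$. Step~1 then gives $\ord_0(f;\GN)=\ord_\infty(f;\GN)$.

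\textbf{Step 3 (conclusion).} Let $f$ and $g$ be generic irreducible holomorphic eta quotients of level $N$ that are both Fricke eigenforms and satisfy $\ord_\infty f=\ord_\infty g$. By Step~2, $\ord_0 f=\ord_0 g$ as well. Corollary~\ref{ppi} then forces $f=g$.

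The same argument gives the variants mentioned in the table footnote. Starting from $\infty$ or $0$ is symmetric. Replacing $0$ by $1/p$ uses the version of Corollary~\ref{ppi} with $1/p$ in place of $0$.

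The only point needing care is that "uniquely determined" is understood within the same level $N$, where the Fricke involution is taken. Since the Fricke involution depends on $N$, I would state explicitly that $f$ and $g$ are compared on the same $\GN$. I expect no real obstacle: the substantive content lies in Corollary~\ref{ppi}, and the rest is the routine cusp-swap computation in Step~1.
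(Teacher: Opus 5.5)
Your proposal is correct and follows the paper's proof. The paper also deduces the claim from Corollary~\ref{ppi}, using that the Fricke involution $W_{p^n}$ swaps the cusp classes of $1\sim 0$ and $1/p^n\sim\infty$ (it cites \eqref{11:03AM}, where you verify this directly from \eqref{27.04.2015.1}). Your caveat that uniqueness must be read within a fixed level is apt, since e.g.\ $\mathcal{H}_{p,2}$ and $\mathcal{H}_{p,4}$ are both generic irreducible Fricke eigenforms with $\ord_\infty=1/24$.
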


All the above corollaries in particular, hold for irreducible cuspidal eta
quotients.
By $\eta_d$ we denote the \emph{rescaling} of $\eta$ by $d$,
defined as 
$$\eta_d(z):=\eta(dz)\text{ \ for all $z\in\mathfrak{H}$.}$$

For a prime $p$ and an $n\in\N$,
define the  eta quotient $\mathcal{H}_{p,n}$ by
    \begin{equation}
\mathcal{H}_{p,n}:=\begin{cases}
{\frac{\displaystyle{(\eta_p\hspace{1pt}\eta_{p^{n-1}})^p\hspace{1pt}\prod_{s=1}^{n-1}\eta_{p^s}^{(p-1)(p-2)}}}{\ \quad\displaystyle{(\eta\hspace{1pt}\eta_{p^n})^{p-1}}}}&\text{ if $n$ is odd}\\
\\
{\frac{\displaystyle{\eta_p^p\hspace{1pt}\eta_{p^{n-1}}^{(p-1)^2}\hspace{1pt}\prod_{s=1}^{n/2-1}\eta_{p^{2s-1}}^{(p-1)^2-p}\hspace{1pt}\eta_{p^{2s}}^{(p-1)^2+1}}}{\displaystyle{(\eta\hspace{1pt}\eta_{p^n})^{p-1}}}}&\text{ if $n$ is even}
\end{cases}
\label{hpn}\end{equation}
and note that 
\begin{equation}
  \mathcal{H}_{2,\text{odd\hspace{1pt}$n$}}=\frac{\eta_2^2}{\eta}\times\frac{\eta_{2^{n-1}}^2}{\eta_{2^n}}, \ \mathcal{H}_{p,1}(z)=\eta\eta_p, \  \mathcal{H}_{p,3}(z)=\mathcal{H}_{p,2}(z)\mathcal{H}_{p,2}(pz).
\label{redH}\end{equation}
 \begin{theorem} For a prime $p$ and an $n\in\N$, the primitive 
cuspidal eta quotient $\mathcal{H}_{p,n}$ is irreducible except only for the cases in \eqref{redH}.
\label{prp1}\end{theorem}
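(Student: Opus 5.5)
By Theorem~E, a reducible holomorphic eta quotient of level $p^n$ has a proper factor whose level divides $p^n$. So it suffices to show that $\mathcal{H}_{p,n}$ has no proper factor on $\Gamma_0(p^n)$. That is a finite, linear-algebraic question about order vectors. Primitivity is immediate, since both $\eta$ and $\eta_{p^n}$ occur with nonzero exponent. The exceptional factorizations in \eqref{redH} are checked directly.

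\textbf{Step 1 (orders).} For $t=0,\dots,n$ put $y_t:=24\,\mathrm{ord}_{1/p^t}(\mathcal{H}_{p,n};\Gamma_0(p^n))$. By \eqref{27.04.2015.1},
$$y_t=p^{\,n-\min(t,n-t)}\sum_{s=0}^n p^{-|t-s|}X_{p^s}.$$
The matrix $A=(p^{-|t-s|})_{t,s}$ has an explicit tridiagonal inverse. Up to the factor $(p^2-1)^{-1}$, its rows are $(1,-p)$ and $(-p,1)$ at the two ends and $(-p,\,1+p^2,\,-p)$ in the interior. I will evaluate $y_t$ separately for odd and even $n$ using the exponents in \eqref{hpn}. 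I expect the outcome to be that $y_t>0$ for every $t$, which gives cuspidality, and that the values are the smallest ones compatible with the integrality constraints below. The two parity cases in \eqref{hpn} look designed to produce exactly such a minimal alternating pattern.

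\textbf{Step 2 (integrality constraints).} A holomorphic eta quotient $g$ on $\Gamma_0(p^n)$ corresponds exactly to a vector $z=(z_t)$ such that:
\begin{itemize}
\item $z_t\ge 0$ for all $t$;
\item the exponents $X=A^{-1}D^{-1}z$ are integers, where $D=\mathrm{diag}(p^{\,n-\min(t,n-t)})$.
\end{itemize}
Writing out the tridiagonal inverse turns the second condition into explicit congruences. Each interior $z_t$ must satisfy a divisibility by a suitable power of $p$. Each combination $(1+p^2)w_t-p(w_{t-1}+w_{t+1})$ of the rescaled values $w_t=z_t/p^{\,n-\min(t,n-t)}$ must lie in $(p^2-1)\mathbb{Z}$, with the analogous two-term conditions at the endpoints.

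\textbf{Step 3 (main obstacle).} A proper factor $g$ corresponds to a vector $z$ with $0\le z\le y$ componentwise, $z\ne 0$, $z\ne y$, satisfying the constraints of Step~2. I must show no such $z$ exists except in the cases of \eqref{redH}.

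First I use the $p$-divisibility constraints. Since $y_t$ is small, they should force each $z_t$ to be either $0$ or equal to $y_t$ at the interior cusps. Next I propagate the $(p^2-1)$-congruences along the chain $t=0,1,\dots,n$. For $p\ge 3$, and for $p=2$ with $n$ even, a single coordinate equal to $0$ should force all coordinates to vanish, and a single coordinate equal to $y_t$ should force $z=y$. This propagation is where the real work lies. The boundary cusps $t=0$ and $t=n$ and the parity of $n$ need separate handling.

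Precisely where the propagation breaks down, the exceptions appear. For $p=2$ and $n$ odd, $p^2-1=3$ is small enough that partial vectors survive, and they give the factorization in \eqref{redH}. For $n=1$ and $n=3$, the vectors $y$ split as sums of translates, matching $\eta\eta_p$ and $\mathcal{H}_{p,2}(z)\,\mathcal{H}_{p,2}(pz)$. I will verify these explicitly, and also that no other surviving $z$ exists, which completes the classification.
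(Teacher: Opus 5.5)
\textbf{There is a genuine gap: Step~3 is the entire argument, and it is not carried out. The mechanism you sketch for it also rests on premises that are false.}

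Your reduction via Theorem~E to factorizations on $\Gamma_0(p^n)$ matches the paper, but the sketched mechanism for Step~3 fails for three reasons.

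(i) Integrality imposes no coordinatewise $p$-power divisibility on interior orders. For example, $24\,\mathrm{ord}_{1/p^t}(\eta;\Gamma_0(p^n))=A_{p^n}(p^t,1)=1$ for $n/2\le t<n$, and symmetrically $\eta_{p^n}$ has order $1$ at $1/p^t$ for $0<t\le n/2$. The lattice conditions only couple coordinates.

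(ii) The interior orders of $\mathcal{H}_{p,n}$ are not small. For $n=2$ the vector of $24\,\mathrm{ord}$'s is $(1,\,p^3-p^2-p+2,\,1)$. In general $24\,\mathrm{ord}_{1/p^t}\mathcal{H}_{p,n}$ is of the same order as $m_{p^t,p^n}=p^{\,n-\min(t,n-t)}(p^2-1)$. Only the orders at $0$ and $\infty$ are small: $1$ for even $n$ and $p+1$ for odd $n$.

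(iii) For odd $n>3$ and odd $p$, propagation from ``some coordinate equals $0$ or $y_t$'' cannot work. Put $a=24\,\mathrm{ord}_0$ and $b=24\,\mathrm{ord}_\infty$. The values $(a,b)=(p,1)$ and $(1,p)$ are realized by the genuine holomorphic eta quotients $\mathcal{H}_{p,n-1}(z)$ and $\mathcal{H}_{p,n-1}(pz)$ on $\Gamma_0(p^n)$, and they add up to $(p+1,p+1)$. Their order vectors therefore satisfy all of your Step~2 constraints and lie below $y$ at both endpoints, yet neither is a factor. They are excluded only by an inequality at some interior cusp, and nothing in your outline locates it. Step~1 (positivity of all $y_t$) is likewise only anticipated.

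\textbf{The missing ingredient is uniqueness in a fundamental box.} Let $Y=B_{p^n}^{-1}X$, so that $Y_t=24\,\mathrm{ord}_{1/t}/m_{t,p^n}$.
\begin{itemize}
\item Besides cuspidality of $\mathcal{H}_{p,n}$, you need cuspidality of $\mathcal{F}_{p^n}/\mathcal{H}_{p,n}$ for $p^n>2$. This gives $Y\in(0,1)^{\mathbb{D}_{p^n}}$, hence $Y'\in[0,1)^{\mathbb{D}_{p^n}}$ for every factor $h$.
\item A Smith normal form computation shows that a point of $B_{p^n}^{-1}\mathbb{Z}^{\mathbb{D}_{p^n}}\cap[0,1)^{\mathbb{D}_{p^n}}$ is determined by its two coordinates at the cusps $0$ and $\infty$.
\item Everything then reduces to $(a,b)$. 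Fricke invariance gives $0\le a,b\le 24\,\mathrm{ord}_\infty\mathcal{H}_{p,n}$. The identity $a-p^nb=\sum_j p^{n-j}(1-p^{2j})X_{p^j}$ gives $a\equiv p^nb\pmod{p^2-1}$.
\item For even $n$ this forces $a=b\in\{0,1\}$, so $h\in\{1,\mathcal{H}_{p,n}\}$.
\item For odd $n>3$ and odd $p$, the only extra options are $(p,1)$ and $(1,p)$. The quotients $\mathcal{H}_{p,n-1}(z)$ and $\mathcal{H}_{p,n-1}(pz)$ are irreducible by the even case, so they also lie in the box. Uniqueness then forces $\{h,h'\}=\{\mathcal{H}_{p,n-1}(z),\mathcal{H}_{p,n-1}(pz)\}$.
\item This contradicts the explicit identity $\mathcal{H}_{p,n-1}(z)\mathcal{H}_{p,n-1}(pz)/\mathcal{H}_{p,n}=(\eta_{p^2}\eta_{p^{n-2}})^{p^2-p+1}/(\eta_p\eta_{p^{n-1}})^p\neq1$.
\end{itemize}

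Two minor points. The end rows of your tridiagonal inverse are $(p^2,-p)$ and $(-p,p^2)$, not $(1,-p)$. In the cases \eqref{redH}, nothing beyond the displayed factorizations needs checking.
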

\begin{co}
For every $k\in\N$, there exists a primitive and irreducible 
cuspidal eta quotient 
of weight $k/2$ and level $2^{2k}$, viz. $\mathcal{H}_{2,2k}$.
\label{snld}\end{co}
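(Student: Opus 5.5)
The corollary follows from Theorem~\ref{prp1} once we check that $\mathcal{H}_{2,2k}$ has level $2^{2k}$, has weight $k/2$, is primitive and cuspidal, and is not one of the exceptions listed in \eqref{redH}.

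\textbf{The exceptions.} The listed cases are $\mathcal{H}_{2,n}$ with $n$ odd, $\mathcal{H}_{p,1}$, and $\mathcal{H}_{p,3}$. Since $n=2k$ is even, $\mathcal{H}_{2,2k}$ is none of them, so Theorem~\ref{prp1} gives that it is an irreducible primitive cuspidal eta quotient.

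\textbf{Weight.} I will specialize the even case of \eqref{hpn} to $p=2$. Then $(p-1)^2=1$, $(p-1)^2-p=-1$ and $(p-1)^2+1=2$, so
$$\mathcal{H}_{2,2k}=\frac{\eta_2^2\,\eta_{2^{2k-1}}\prod_{s=1}^{k-1}\eta_{2^{2s-1}}^{-1}\eta_{2^{2s}}^{2}}{\eta\,\eta_{2^{2k}}}.$$
The weight is half the sum of the exponents. Counting them gives
$$2+1+(k-1)(-1+2)-2=k,$$
so the weight is $k/2$.

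There is a subtlety when $k=1$: then $2^{2k-1}=2$, so $\eta_p^p$ and $\eta_{p^{n-1}}^{(p-1)^2}$ merge into the single factor $\eta_2^3$. The product over $s$ is empty, giving $\mathcal{H}_{2,2}=\eta_2^3/(\eta\,\eta_4)$. This has weight $1/2$ and agrees with the list of generic irreducible eta quotients on $\Gamma_0(4)$ given in the introduction, which serves as a check.

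\textbf{Level.} The factors $\eta$ and $\eta_{2^{2k}}$ occur with exponent $-1\neq 0$. Hence the level, the $\lcm$ of the indices with nonzero exponent, is exactly $2^{2k}$.

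\textbf{Primitivity.} Primitivity also follows because $\eta=\eta_1$ occurs with nonzero exponent: an eta quotient of the form $h(\nu z)$ with $\nu>1$ could only involve $\eta_d$ with $\nu\mid d$, and $\nu\nmid 1$. (Theorem~\ref{prp1} already asserts primitivity, but this is immediate to confirm.)

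\textbf{Cuspidality.} Cuspidality is part of the hypothesis of Theorem~\ref{prp1}, which calls $\mathcal{H}_{p,n}$ a cuspidal eta quotient. If one wants to verify it directly, compute via \eqref{27.04.2015.1} the order at each $1/2^t$ for $0\le t\le 2k$, and check that it is positive.

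\textbf{Main obstacle.} The only real difficulty is the bookkeeping: handling the $k=1$ coincidence of factors and verifying the exponent sum. Everything substantive is contained in Theorem~\ref{prp1}.
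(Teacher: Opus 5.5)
Your proposal is correct and follows the paper's own proof: the paper also reads off the weight of $\mathcal{H}_{p,n}$ from \eqref{hpn}, via the general formula \eqref{2:35AM}, which gives $\frac{2k-1}{2}-\frac{k}{2}+\frac12=\frac k2$ for $p=2$, $n=2k$, and then invokes Theorem~\ref{prp1}, since even $n$ avoids the exceptions in \eqref{redH}. Your extra checks of the level and of primitivity are valid (primitivity implicitly uses the uniqueness \eqref{6:50PM}), but cuspidality is something Theorem~\ref{prp1} asserts and \S\ref{17.6Sept} proves, not a hypothesis of that theorem.
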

The above corollary implies that 
for all $k\in\N$, the $k$-th Mersmann level $\cl{M}_k$ is divisible by $2^{2k}$. More generally, we have the following result.
\begin{co}\label{12:42PM} For all $k\in\N$, the $k$-th Mersmann level $\cl{M}_k$ is divisible by
    $$\prod_{\substack{p \ \text{ prime}\\p\leq\sqrt{k-\frac34}+\frac32}}p^{\varepsilon_{p,k}},$$
where $$\varepsilon_{p,k}=\begin{cases}
2k&\text{if \ $p=2$}\\
\mathbf{I}_{\N}\Big(\frac{2k+p-3}{(2p-3)(p-1)}+1  \Big)&\text{if \ $p>2$ and  $k\notin2\N$}\\
\mathbf{I}_{\N_{>3}}\Big(\frac{k-2}{(p-1)(p-2)}+1  \Big)&\text{if \ $p>2$ and  $k\in2\N$.}\\
\end{cases}$$
 Here $$\mathbf{I}_{\N}(n)=\begin{cases}
n&\text{if $n\in\N$}\\0&\text{otherwise}
 \end{cases}
 \text{ \ and \ \ }
 \mathbf{I}_{\N_{>3}}(n)=\begin{cases}
n&\text{if $n\in\N$ and $n>3$}\\0&\text{otherwise.}
 \end{cases}
 $$
\end{co}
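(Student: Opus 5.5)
The plan is to read off the divisibility directly from Theorem~\ref{prp1}. Each $\mathcal{H}_{p,n}$ not listed in \eqref{redH} is a primitive, irreducible, cuspidal (hence holomorphic) eta quotient. Its level is exactly $p^n$, because both $\eta$ and $\eta_{p^n}$ occur with nonzero exponent $-(p-1)$. So whenever the weight of $\mathcal{H}_{p,n}$ is at most $k/2$, we get $p^n\mid\cl{M}_k$. Since the primes are distinct, $\cl{M}_k$ is then divisible by the product of these prime powers. The work is therefore to compute the weight of $\mathcal{H}_{p,n}$ and invert it for $n$.

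First I compute twice the weight, i.e.\ the sum of the exponents in \eqref{hpn}. For odd $n$ it is $2p+(n-1)(p-1)(p-2)-2(p-1)=2+(n-1)(p-1)(p-2)$. For even $n$ it is
$$p+(p-1)^2+\Big(\tfrac n2-1\Big)\big(2(p-1)^2+1-p\big)-2(p-1)=p^2-3p+3+\Big(\tfrac n2-1\Big)(2p-3)(p-1).$$
For $p=2$ the even case gives weight $n/2$, so $n=2k$ yields weight $k/2$. This is Corollary~\ref{snld}, and it gives $\varepsilon_{2,k}=2k$ for every $k$; note $2\le\sqrt{k-3/4}+3/2$ always holds.

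For $p>2$ I consider the two cases separately.
\begin{itemize}
\item \emph{Even $n$.} Here $p^2-3p+3$ is odd and $(2p-3)(p-1)$ is even, so $k$ is odd. Solving $k=p^2-3p+3+(\frac n2-1)(2p-3)(p-1)$ gives $n=\frac{2k+p-3}{(2p-3)(p-1)}+1$, using the identity $2(k-p^2+3p-3)+(2p-3)(p-1)=2k+p-3$. The smallest admissible value is $n=2$, of weight $(p^2-3p+3)/2$. The condition $p^2-3p+3\le k$ is equivalent to $(p-\frac32)^2\le k-\frac34$, which is the range of primes in the product. Whenever this $n$ is a positive integer, the bound $n\ge 2$ holds automatically (since $k\ge p^2-3p+3$ in that range), and the exclusions $n=1,3$ of \eqref{redH} do not arise because $n$ is even. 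This gives $\varepsilon_{p,k}=\mathbf{I}_{\N}(\cdot)$ for odd $k$.
\item \emph{Odd $n$.} Here $k=2+(n-1)(p-1)(p-2)$ is even, and $n=\frac{k-2}{(p-1)(p-2)}+1$. The cases $n=1$ and $n=3$ are reducible by \eqref{redH}, which is why the indicator is taken over $\N_{>3}$.
\end{itemize}
If $\varepsilon_{p,k}=0$ there is nothing to prove.

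The main obstacle is the parity bookkeeping in the even-$k$ case. The formula $\frac{k-2}{(p-1)(p-2)}+1$ can be an integer greater than $3$ that is even, when $\frac{k-2}{(p-1)(p-2)}$ is odd. Then $\mathcal{H}_{p,n}$ for that even $n$ is governed by the other weight formula and need not have weight $k/2$. I would try to close this gap in one of two ways:
\begin{itemize}
\item show that the even-$n$ weight formula still gives weight at most $k/2$ in that situation;
\item otherwise, use the smaller odd level $p^{n-1}$, whose weight $(k-(p-1)(p-2))/2$ is less than $k/2$. This would show that the intended reading of $\mathbf{I}_{\N_{>3}}$ should be restricted to odd values.
\end{itemize}
If neither works, the statement must be interpreted with $n$ odd, and I would record that explicitly. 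Apart from this, the proof is the arithmetic above combined with Theorem~\ref{prp1} and the definition of $\cl{M}_k$ as an lcm over all weights at most $k/2$.
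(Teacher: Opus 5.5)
Your route is the same as the paper's: its proof consists of the single sentence ``follows from Theorem~\ref{prp1} and \eqref{2:35AM}'', i.e.\ invert the weight formula. However, the parity problem you flag is a genuine gap, and your first proposed repair cannot work. The same problem also occurs in the odd-$k$ case, where your claim that ``$n$ is even'' is unjustified. Subtracting the two weight formulas at the same $n$ gives
\[
\Big(p^2-3p+3+\tfrac{(n-2)(2p-3)(p-1)}{2}\Big)-\Big(2+(n-1)(p-1)(p-2)\Big)=p^2-3p+1+\tfrac{(p-1)(n-2p+2)}{2}.
\]
The right-hand side is increasing in $n$, and equals $\frac{p-3}{2}$ at $n=3$, $p-2$ at $n=4$, and $\frac{3p-5}{2}$ at $n=5$. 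Consequences:
\begin{itemize}
\item \emph{Even $k$.} If $\frac{k-2}{(p-1)(p-2)}+1$ is an even integer $n\ge4$, then $\mathcal{H}_{p,n}$ has weight strictly larger than $k/2$. For example, $(p,k)=(3,8)$ gives $\varepsilon_{3,8}=4$, but $\mathcal{H}_{3,4}$ has weight $9/2>4$. Moreover, the only irreducible member of the family with $p=3$ and weight at most $4$ is $\mathcal{H}_{3,2}$.
\item \emph{Odd $k$.} For $p\equiv1\pmod 4$ the value $\frac{2k+p-3}{(2p-3)(p-1)}+1$ can be odd. If it is odd and at least $5$, you can rescue the claim: by the identity above, the odd-$n$ member $\mathcal{H}_{p,n}$ is irreducible of weight below $k/2$. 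If it equals $3$, the argument fails. For $(p,k)=(5,27)$ you get $\varepsilon_{5,27}=3$, while $\mathcal{H}_{5,3}$ is reducible by \eqref{redH} and $\mathcal{H}_{5,4}$ has weight $41/2>27/2$.
\end{itemize}

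So the statement as printed does not follow from Theorem~\ref{prp1} and the weight formula; the paper's one-line proof overlooks exactly the same parity condition. What the argument actually proves is a weaker version. In it, $\varepsilon_{p,k}$ is replaced by $0$ unless the resulting $n$ has the right parity: even for odd $k$, and odd with $n>3$ for even $k$. Alternatively, and more naturally, one can take $\varepsilon_{p,k}=\max\{n:\mathcal{H}_{p,n}\text{ irreducible of weight at most }k/2\}$. Your second option is therefore the right resolution, but it amends the statement rather than proving it. There is also a minor slip: for $p=2$ the exponent sum is $n/2$, so the weight is $n/4$, not $n/2$. Your conclusion that $n=2k$ gives weight $k/2$ is nevertheless correct.
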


\
 
  In the following, by a \emph{rescaling} of an eta quotient $f$ of level $N$, we mean the image of $f$
under the rescaling by a positive integer and by \emph{image of~$f$ under an Atkin-Lehner involution}, 
we mean the image of $f$
under an Atkin-Lehner involution of 
some level $M\in N\Z$.

\begin{co}
Except for the cases in \eqref{redH}, the following assertions hold:

\noindent
 $(a)$ All the rescalings of $\mathcal{H}_{p,n}$ are irreducible.\\ 
  $(b)$ All the images of $\mathcal{H}_{p,n}$ under Atkin-Lehner involutions are irreducible.\\
$(c)$ All the rescalings of the images of $\mathcal{H}_{p,n}$ under Atkin-Lehner involutions are irreducible.\\
$(d)$ All the images under Atkin-Lehner involutions of the rescalings\footnote{Atkin-Lehner involutions and rescalings do not commute with each other in general.} 
 of $\mathcal{H}_{p,n}$ are irreducible.\\
$(e)$ Under every arbitray finite sequence of compositions of Atkin-Lehner involutions and rescalings, the image of $\mathcal{H}_{p,n}$ 
is irreducbile.
\label{cor2}\end{co}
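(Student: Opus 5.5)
The plan is to reduce every assertion to Theorem~\ref{prp1} by checking two facts. First, rescalings preserve irreducibility. Second, Atkin-Lehner involutions of any level $M\in N\Z$ preserve irreducibility. Once both facts are in hand, $(a)$--$(d)$ are special cases of $(e)$, and $(e)$ follows by induction on the length of the composition sequence.

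\textbf{Rescalings.} The map $f\mapsto f(\nu z)$ is an injective group homomorphism on eta quotients. It multiplies the orders at cusp classes in a controlled way, so it preserves holomorphy. The subtle direction is the converse: a factor $g$ of $f(\nu z)$ need not a priori be a rescaling by $\nu$. I would handle this with Theorem~E. The rescaling $\mathcal{H}_{p,n}(\nu z)$ has level $p^n\nu$, and we may write $\nu=p^a m$ with $p\nmid m$.

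For the prime-to-$p$ part $m$, I would use the multiplicativity of the order formula \eqref{27.04.2015.1} over primes. A factor of $h(mz)$, with $h$ of $p$-power level, can be analysed at the cusp classes $1/(t\delta)$ with $\delta\mid m$. The relevant matrix is a Kronecker product, and positivity considerations show that the factor is itself of the form $g(mz)$. Rather than relying on this alone, the cleaner route is to observe that $h(mz)=\mathcal{AL}_{m,p^am\cdot p^n}$ applied to $h(p^a\cdot)$ up to relabelling. This lets us reduce to the Atkin-Lehner case.

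For the $p$-part, the level stays a prime power $p^{n+a}$. By Theorem~E, any proper factor can be taken of level dividing $p^{n+a}$. I would then show that a holomorphic eta quotient on $\Gamma_0(p^{n+a})$ dividing $\mathcal{H}_{p,n}(p^a z)$ must be supported on $\eta_{p^{a}},\dots,\eta_{p^{a+n}}$. This follows from the orders at the cusps $1/p^j$ with $j<a$, which are forced to be zero and propagate the support. Hence the factor is a rescaling of a factor of $\mathcal{H}_{p,n}$, which contradicts Theorem~\ref{prp1}.

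\textbf{Atkin-Lehner involutions.} By \eqref{11:59AM}, $\mathcal{AL}_{\delta,M}$ permutes the cusp classes of $\Gamma_0(M)$: the class of $1/t$ is sent to that of $1/(\lcm(t,\delta)/\gcd(t,\delta))$. The orders at these classes are permuted accordingly, up to a positive constant. Consequently $\mathcal{AL}_{\delta,M}$ is an involutive automorphism of the monoid of holomorphic eta quotients on $\Gamma_0(M)$. In particular, any factorization on $\Gamma_0(M)$ of the image pulls back to a factorization of the original on $\Gamma_0(M)$.

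\textbf{Main obstacle.} The difficulty is that irreducibility allows factors of arbitrary level, not only of level dividing $M$. Here I would invoke Theorem~B or a Theorem~E-type statement to bound the levels of the factors. Then I would enlarge $M$ to a multiple $M'$ that contains all of them. Finally I would check that $\mathcal{AL}_{\delta',M'}$, with $\delta'$ the unitary divisor of $M'$ extending $\delta$, agrees with $\mathcal{AL}_{\delta,M}$ on the given eta quotient up to a rescaling. Since rescalings are handled in the previous step, this removes the level issue.

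Combining the two preservation results with Theorem~\ref{prp1} gives $(a)$ and $(b)$ directly. Alternating the two steps gives $(c)$, $(d)$ and, by induction on the length of the sequence, $(e)$. The exceptional cases \eqref{redH} are excluded throughout because they are reducible to begin with.
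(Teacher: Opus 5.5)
Your proposal has genuine gaps, and the idea you are missing is the one the paper's proof rests on. $\mathcal{H}_{p,n}$ is a Fricke eigenform of level $p^n$, i.e.\ its exponents satisfy $X_{p^j}=X_{p^{n-j}}$ (see \eqref{hpn}). By \eqref{11:59AM}, every Atkin--Lehner involution of any level $M\in p^n\mathbb{Z}$ then sends a rescaling of $\mathcal{H}_{p,n}$ to another rescaling of $\mathcal{H}_{p,n}$. Rescalings also compose to rescalings, so every composition in $(b)$--$(e)$ applied to $\mathcal{H}_{p,n}$ is just some $\mathcal{H}_{p,n}(\nu z)$, and everything reduces to $(a)$. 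The paper obtains $(a)$ from Corollary~5 of \cite{B5} together with Theorem~\ref{prp1}. Without this reduction, your induction for $(e)$ applies your preservation arguments to intermediate eta quotients of non-prime-power level (e.g.\ $\mathcal{H}_{p,n}(mz)$ with $p\nmid m>1$). Those arguments rely on Theorem~E, which is only available at prime-power level.

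Your proof of $(a)$ also fails in two places.

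\emph{The $p$-part.} The orders of $\mathcal{H}_{p,n}(p^az)$ at the cusps $1/p^j$, $j<a$, of $\Gamma_0(p^{n+a})$ are not zero. By \eqref{27.04.2015.1} they equal $p^{\,j-\min\{j,n+a-j\}}\,\mathrm{ord}_1(\mathcal{H}_{p,n};\Gamma_0(p^n))$, which is positive because $\mathcal{H}_{p,n}$ is cuspidal. So nothing confines a factor to $\eta_{p^a},\dots,\eta_{p^{a+n}}$. The Fricke symmetry repairs this: $\mathcal{H}_{p,n}(p^az)=\mathcal{AL}_{p^{n+a},p^{n+a}}(\mathcal{H}_{p,n})$. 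Lemma~\ref{7:03PM} and the irreducibility of $\mathcal{H}_{p,n}$ then show it is not factorizable on $\Gamma_0(p^{n+a})$, and Theorem~E gives irreducibility.

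\emph{The prime-to-$p$ part.} Your reduction here is circular. You turn $\mathcal{H}_{p,n}(mz)$ into an Atkin--Lehner image, but your Atkin--Lehner step turns it back into a rescaling. Take $M'$ a common level of the putative factors and $\delta'\| M'$ supported on the primes of $m$. Then $\mathcal{AL}_{\delta',M'}$ maps $\mathcal{H}_{p,n}(mz)$ to $\mathcal{H}_{p,n}((\delta'/m)z)$. This is again a prime-to-$p$ rescaling unless $m\| M'$, which you cannot arrange. Theorem~B gives no such control, and you do not need it: for a given factorization, take $M'$ to be the lcm of the levels. Your ``positivity considerations'' are only asserted.

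One way to close this gap: write $M'=p^{a'}m'$ with $p\nmid m'$ and use the collapse map $\prod\eta_{p^ju}^{Y_{p^ju}}\mapsto\prod\eta_{p^j}^{Y_{p^ju}}$ ($u\mid m'$). It is a weight-preserving homomorphism. It preserves holomorphy: by \eqref{13May}, the orders of the image at $1/p^s$ are combinations of the orders of the original at the cusps $1/(p^su)$. The coefficients are the column sums of $A_{m'}^{-1}$, which are positive by \eqref{apninv}. So if $\mathcal{H}_{p,n}(p^amz)=g_1g_2$ with $g_1,g_2$ holomorphic, the collapses factor $\mathcal{H}_{p,n}(p^az)$, and one of them is $1$. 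The corresponding $g_i$ is then a holomorphic eta quotient of weight $0$, hence equal to $1$.
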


\section{Preliminaries}
\label{26.08.2015.1}
For $N\in\N$, by $\bb{D}_N$ we denote the set of divisors of $N$.
For 
$X\in\Z^{\bb{D}_N}$, we define the eta quotient $\e^X$
   by
   \begin{equation}\label{3Jan15.1}
    \e^X:=\displaystyle{\prod_{\ \hspace*{4pt}{d\in\bb{D}_N}}\eta_d^{X_d}},
\end{equation}
where $X_d$ is the 
value of $X$ 
at $d\in\bb{D}_N$ whereas $\e_d$ denotes the rescaling of $\e$ by $d$.
Clearly, the level of $\e^X$ divides $N$. In other words, $\e^X$ transforms like a modular form on $\GN$. 
Recall that an eta quotient $f$ on $\GN$ is holomorphic
if it 
does not have any poles at the cusps of $\GN$.
We define the \emph{\hypertarget{om}{order map}}
$\cl{O}_N:\Z^{\bb{D}_N}\rightarrow\frac1{24}\Z^{\bb{D}_N}$ of level $N$ as the map which sends $X\in\Z^{\bb{D}_N}$ to the ordered set of
orders of vanishing of the eta quotient $\e^X$ at the cusps $\{1/t\}_{\substack{\ \\ t\in\bb{D}_N}}$ of $\GN$ (see~\eqref{27.04.2015.1}). We define the
\emph
{order matrix} $A_N\in\Z^{\bb{D}_N\times\bb{D}_N}$ of level $N$ by 
\begin{equation}
 A_N(t,d):=24\cdot\ord_{1/t}(\e_d\hspace{1pt};\GN) 
\label{27.04.2015}\end{equation}
for all $t,d\in\bb{D}_N$.
 For example, for a prime power $p^n$, 
we have
\begin{equation}
A_{p^n}=\begin{pmatrix}
 \vspace{5.8pt}p^n &p^{n-1} &p^{n-2} 
&\cdots  &p &1\\
\vspace{5.8pt} p^{n-2} &p^{n-1} &p^{n-2} 
&\cdots  &p &1\\
 \vspace{5.8pt}p^{n-4} &p^{n-3} &p^{n-2} 
&\cdots  &p &1\\
\vspace{5.8pt} \vdots &\vdots &\vdots 
&\cdots &\vdots &\vdots\\
\vspace{5.8pt} 1 &p &p^2 
&\cdots &p^{n-1} &p^{n-2}\\
 1 &p &p^2 
&\cdots  &p^{n-1} &p^n
\end{pmatrix}.
 \label{23July}
\end{equation}
More precisely, for all integers $i,j\in[0,n]$, 
\begin{equation}
 A_{p^n}(p^i,p^j)=p^{n-\min\{i,n-i\}-|i-j|}.
\label{12:47PM}\end{equation}
By linearity of the order map, we have 
\begin{equation}
\cl{O}_N(X)=\frac1{24}\cdot A_NX\in\frac1{24}\ZD. 
\label{28.04}\end{equation}
For $r\in\N$, if $Y,Y'\in\Z^{\bb{D}_N^{\hspace{.5pt}r}}$ is 
such that $Y-Y'$ is
nonnegative at each element of $\bb{D}_N^{\hspace{.5pt}r}$, then
we write $Y\geq Y'$. 
In particular, for $X\in\Z^{\bb{D}_N}$, the eta quotient $\e^X$ is holomorphic (resp. cuspidal) if and only if $A_NX\geq0$ (resp. $A_NX>0$).

From $(\ref{27.04.2015})$ and $(\ref{27.04.2015.1})$, we note that $A_N(t,d)$ is multiplicative in $N, t$ and~$d$.
Hence, it follows that
\begin{equation}
 A_N=\bigotimes_{\substack{p^n\|N\\\text{$p$ prime}}}A_{p^n},
\label{13May}\end{equation}
where by \hspace{1.5pt}$\otimes$\hspace{.2pt},  we denote the Kronecker product of matrices.\footnote{Kronecker product of matrices is
not commutative. However, 
since any given ordering of the primes dividing $N$ induces a lexicographic ordering on $\bb{D}_N$ 
with which the entries of $A_N$ are indexed, 
Equation (\ref{13May}) makes sense for all possible 
orderings of the primes dividing $N$.} 

We define the operation $\odot:\N\times\N\to\N$ by
\begin{equation}
    d_1\odot d_2=\frac{\lcm(d_1,d_2)}{\gcd(d_1,d_2)}.
\label{11:42AM}\end{equation}
A divisor $d$ of $N$ is called unitary, written $d\|N$, if $\gcd(d, N/d)=1$.
By $\bb{U}_N$, we denote the set of unitary divisors
of $N$. It follows trivially that $\bb{U}_N$ is a boolean group under $\odot$ (i.e. each element is the inverse of itself) and 
the action of
$\bb{U}_N$ by $\odot$ induces an involution on $\bb{D}_N$.
From \eqref{12:47PM}, we obtain 
\begin{equation}
A_{p^n}(p^i,p^j)=A_{p^n}(p^{n-i},p^{n-j})=A_{p^n}(p^n\odot p ^i,p^n\odot p ^j).
\nonumber\end{equation}
It follows from the above equation and \eqref{13May} that
for all $u\in\bb{U}_N$ and for all $t,d\in\bb{D}_N$, we have
\begin{equation}
A_{N}(t,d)=A_{N}(u\odot t, u\odot d).
\nonumber\end{equation}
For $X\in\Z^{\bb{D}_N}$, we have $A_NX\geq0$ if and only if for each $d\in\bb{D}_N$, 
\begin{equation}
0\leq \sum_{t|N}A_N(t,d)X_d=\sum_{t|N}A_{N}(u\odot t, u\odot d)X_d.
\label{6:30AM}\end{equation}
For $u\in\bb{U}_N$,
the Atkin-Lehner in{\-}volution $\mathcal{AL}_{u,N}$ of the eta quotients on $\GN$
is defined by
\begin{equation}
\mathcal{AL}_{u,N}\Big{(}\prod_{d|N}\eta_d^{X_d}\Big{)}  \  := \  \prod_{d|N}\eta_{u\odot d}^{X_d}.
\label{6:22PM}\end{equation} 
The Fricke involution is a special case of Atkin-Lehner involutions, defined~by
\begin{equation}
    W_N:=\mathcal{AL}_{N,N}.
\nonumber\end{equation}
The Atkin-Lehner involutions of level $N$ form a boolean group $(\cl{G}_N,\circ)$, where the group law is induced by the group law of $\bb{U}_N$, viz. for $u, u'\in\bb{U}_N$, 
$$\mathcal{AL}_{u,N}\circ\mathcal{AL}_{u',N}:=\mathcal{AL}_{u\odot u',N}.$$
Let $\cl{C}_N$ denote the set of the cusp classes of $\GN$. The group action
\begin{equation}
    \cl{G}_N\times\cl{C}_N\longrightarrow\cl{C}_N
\label{11:18PM}\end{equation}
is induced by the action of $\bb{U}_N$ on $\bb{D}_N$, viz. for $u\in\bb{U}_N$, 
the action of 
$\mathcal{AL}_{u,N}$ on $\cl{C}_N$ is defined by the involution which maps 
the cusp class represented by $1/t$ to the one represented by $1/(u\odot t)$.

For $t\in\bb{D}_N$, $u\in\bb{U}_N$ and for each eta quotient $f$ on $\GN$, from
\eqref{27.04.2015},
\eqref{6:30AM} and \eqref{6:22PM},
we have 
\begin{equation}
    \ord_{1/t}(f\hspace{1pt};\GN)=\ord_{1/(u\odot t)}(\mathcal{AL}_{u,N}(f)\hspace{1pt};\GN).
\label{11:03AM}\end{equation}
It follows from \eqref{11:03AM} and \eqref{6:22PM}
that
\begin{lem}$(a)$ 
Holomorphy and cuspidality of eta quotients are preserved by
Atkin-Lehner involutions.
\newline$(b)$ Let $u$ be a unitary divisor of $N$. For eta quotients $f,g$ and $h$ on $\GN$, we have 
$f=gh$ if and only if $$\mathcal{AL}_{u,N}(f)=\mathcal{AL}_{u,N}(g)\times\mathcal{AL}_{u,N}(h).$$
\label{7:03PM}\end{lem}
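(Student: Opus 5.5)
The plan is to derive both parts directly from the identity \eqref{11:03AM} together with the definition \eqref{6:22PM}, using the fact that $\mathcal{AL}_{u,N}$ is an involution on the eta quotients on $\GN$. The key structural observation is that $\mathcal{AL}_{u,N}$ acts on exponent vectors $X\in\ZD$ by the permutation $d\mapsto u\odot d$ of $\D$. This permutation is a bijection because $\bb{U}_N$ is a boolean group acting on $\D$ by $\odot$, so $u\odot(u\odot d)=d$. Hence the induced map $X\mapsto X^u$, where $X^u_{u\odot d}:=X_d$, is a group automorphism of $\ZD$ of order at most two.

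For part $(a)$, I take an eta quotient $f=\e^X$ on $\GN$. By \eqref{11:03AM}, the order of $\mathcal{AL}_{u,N}(f)$ at the cusp class of $1/(u\odot t)$ equals the order of $f$ at $1/t$. Since $t\mapsto u\odot t$ is a bijection of $\D$, the multiset of orders of $\mathcal{AL}_{u,N}(f)$ over all cusp classes $\{1/s\}_{s\in\D}$ coincides with that of $f$. Holomorphy means all these orders are $\geq0$, and cuspidality means all are $>0$. Both conditions are therefore preserved. Because the order of vanishing is constant on a cusp class, the statements about cusps and about cusp classes agree.

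For part $(b)$, I note that eta quotients on $\GN$ correspond bijectively to $\ZD$ via $X\mapsto\e^X$. The correspondence is injective by the uniqueness of the representation, which follows, for instance, from the invertibility of $A_N$ implicit in the cited uniqueness result. Multiplication of eta quotients corresponds to addition of exponent vectors. By \eqref{6:22PM}, $\mathcal{AL}_{u,N}(\e^X)=\e^{X^u}$, and $X\mapsto X^u$ is additive, so $\mathcal{AL}_{u,N}(gh)=\mathcal{AL}_{u,N}(g)\,\mathcal{AL}_{u,N}(h)$. This gives the forward implication. For the converse, I apply $\mathcal{AL}_{u,N}$ once more and use that it is an involution, i.e.\ $(X^u)^u=X$.

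The only point requiring slight care is that "eta quotient on $\GN$" is well defined as a function of $X$. That is, $\e^X=\e^Y$ forces $X=Y$. This follows from the uniqueness statement quoted from \cite{B1, km}, or alternatively from the nonsingularity of $A_N=\bigotimes A_{p^n}$ via \eqref{13May}, since each $A_{p^n}$ is invertible. Everything else is a direct formal consequence of the definitions.
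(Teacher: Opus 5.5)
Your proposal is correct and follows essentially the same route as the paper, which derives the lemma directly from the order identity \eqref{11:03AM} and the definition \eqref{6:22PM}. You spell out the steps the paper leaves implicit: that $d\mapsto u\odot d$ is an involutive bijection of $\D$, and that exponent vectors are unique (cf.\ \eqref{6:50PM}), so $\mathcal{AL}_{u,N}$ is well defined and multiplicative.
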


It is easy to verify that for a prime power $p^n$, 
the matrix $A_{p^n}$ is invertible with the tridiagonal inverse: 
\begin{equation}
A_{p^n}^{-1}=
\frac{1}{p^{n-1}(p^2-1)}
\begin{pmatrix}
\hspace{6pt}p &\hspace*{-6pt}-p &  
& &  & \\
\vspace{5pt}\hspace*{-1pt}-1 & \hspace*{-2pt}p^2+1 &\hspace*{-3pt}-p^2 
& &\textnormal{\Huge 0} & \\
\vspace{5pt}&\hspace*{-6pt}-p & \hspace*{-4pt}p\cdot(p^2+1) &\hspace*{-2pt}-p^3 
 &  &  \\
 &  &\hspace*{-5pt}\ddots 
 &\hspace*{-6pt}\ddots & \ddots & \\
\vspace{5pt}\hspace{2pt} &\textnormal{\Huge 0}  &  
 &\hspace*{-7pt}-p^2 & \hspace*{-4pt}p^2+1 &\hspace*{-4pt}-1\hspace{2pt}\\
\hspace{2pt} &  & 
&  &\hspace*{-6pt}-p & p\hspace{2pt}
\end{pmatrix},
 \label{r1}\end{equation}
where 
for each positive integer $j<n$, 
the nonzero entries of the column  $A_{p^n}^{-1}(\un\hs,p^j)$ are the same as 
those of the column $A_{p^n}^{-1}(\un\hs,p)$ shifted down by $j-1$ entries and
multiplied with $p^{\min\{j-1,n-j-1\}}$. 
More precisely\footnote{In particular, only the row sum(s) of the middle row(s) of $A_{p^n}^{-1}$ are nonzero.},
\begin{align}
p^{n-1}\cdot(p^2-1)&
\cdot A_{p^n}^{-1}(p^i,p^j)\nonumber=\\&\begin{cases}
                        \ \hs\ p&\text{if $i=j=0$ or $i=j=n$}\\
                        -p^{\min\{j,n-j\}}&\text{if $|i-j|=1$}\\
                        \ \hs\ p^{\min\{j-1,n-j-1\}}\cdot(p^2+1)&\text{if $0<i=j<n$}\\
                        \ \hspace{1.2pt}\ 0&\text{otherwise.}
\end{cases}
\label{apninv}\end{align}
For general $N$, the invertibility of the matrix $A_N$ now 
follows by (\ref{13May}).
Hence, an eta quotient on $\GN$ is uniquely determined by its orders at the set of the cusps
$\{1/t\}_{\substack{\ \\ t\in\bb{D}_N}}$ of $\GN$. 
In particular, for distinct $X,X'\in\Z^{\bb{D}_N}$, 
\begin{equation}
\e^X\neq\e^{X'}.
\label{6:50PM}\end{equation}
Since all the entries of $A_N^{-1}$
are rational (see (\ref{13May}) and (\ref{r1})), for each $t\in\bb{D}_N$, there exists a smallest positive integer $m_{{t,N}}$ such that 
$m_{{t,N}}\cdot A_N^{-1}(\un\hs,t)$
has integer entries, where $A_N^{-1}(\un\hs,t)$ denotes the 
column of $A_N$ indexed by $t\in\bb{D}_N$.
It follows from \eqref{apninv} and \eqref{13May} that 
\begin{equation}
 m_{t,N}=\frac{N\hs\rad(\gcd(t,N/t))}{\gcd(t,N/t)\hs\rad(N)}\prod_{\substack{p|N\\p\text{ prime}}}(p^2-1),
\label{mtn}\end{equation}
where 
for $n\in\N$, 
$\rad(n)$ denotes the product of the distinct prime divisors of $n$.
Define the 
matrix $M_N\in\Z^{\bb{D}_N\times\bb{D}_N}$ by
\begin{equation}
M_N(d,t)=\begin{cases}m_{{t,N}}&\text{ if $d=t$}\\ 0&\text{ otherwise.}
         \end{cases}
\label{11:53AM}
\end{equation}
and define $B_N\in\Z^{\bb{D}_N\times\bb{D}_N}$ by
\begin{equation}
 B_N:=A_N^{-1}M_N.
\label{28.08.2015A}\end{equation}
That implies
\begin{equation}
B_N(\un\hs,t):=m_{{t,N}}\cdot A_N^{-1}(\un\hs,t)\hspace{4pt}\text{for all $t\in\bb{D}_N$,}
\label{10:39AM}\end{equation}
where $B_N(\un\hs,t)$  denotes the 
column of $B_N$ indexed by $t\in\bb{D}_N$.
It follows from \eqref{28.04} that
\begin{equation}
 \ord_{1/t'}\e^{B_N(\un\hs,\hs t)}=\begin{cases}{m_{t,N}}/{24}&\text{ if $t'=t$}\\
                                0&\text{ otherwise.}\\
                               \end{cases}
\label{ordbncols}\end{equation}
Since $m_{t,N}\neq0$ for all $t\in\bb{D}_N$, 
from \eqref{28.08.2015A}, we see that $B_N$ is invertible over~$\Q$. 
If $X\in\Z^{\bb{D}_N}$ is such that $\e^X$ is holomorphic and for $Y=B_N^{-1}X$,
if there exists $d\in\bb{D}_N$ such that $Y_d\geq1$, then it follows from 
\eqref{28.04}, \eqref{28.08.2015A}, \eqref{11:53AM} and \eqref{ordbncols}
that $\e^{B_N(\un\hs,\hs t)}$ is a factor of $\e^X$. Thus, we obtain:
\begin{lem}[Lemma~2 from \cite{B3}] For $N\in\N$, let $B_N\in\Z^{\bb{D}_N\times\bb{D}_N}$ be as defined in $(\ref{28.08.2015A})$. 
For $X\in\Z^{\bb{D}_N}$, if $\e^X$ is a holomorphic eta quotient which is not factorizable on $\GN$,
then either $X\in B_N\cdot[0,1)^{\bb{D}_N}$ or $X=B_N(\un\hs,t)$ for some $t\in\bb{D}_N$.
\label{b2}\end{lem}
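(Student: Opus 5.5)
The statement to prove is Lemma~\ref{b2}: if $\e^X$ is holomorphic and not factorizable on $\GN$, then either $X\in B_N\cdot[0,1)^{\bb{D}_N}$ or $X=B_N(\un\hs,t)$ for some $t\in\bb{D}_N$. The plan is to pass to the coordinates $Y:=B_N^{-1}X\in\Q^{\bb{D}_N}$, which exist because $B_N$ is invertible over $\Q$. In these coordinates holomorphy becomes coordinatewise nonnegativity: by \eqref{28.04} and \eqref{28.08.2015A},
\begin{equation*}
A_NX=A_NB_NY=M_NY,
\end{equation*}
so $24\,\ord_{1/t}(\e^X;\GN)=m_{t,N}Y_t$. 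Since $m_{t,N}>0$ by \eqref{mtn}, $\e^X$ is holomorphic if and only if $Y\geq0$. Thus $Y\in[0,\infty)^{\bb{D}_N}$.

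Suppose $X\notin B_N\cdot[0,1)^{\bb{D}_N}$. Then some coordinate satisfies $Y_t\geq1$. Put $g:=\e^{B_N(\un\hs,t)}$ and $h:=\e^{X-B_N(\un\hs,t)}$; both are eta quotients on $\GN$ because $B_N$ has integer entries by the choice of $m_{t,N}$. By \eqref{ordbncols}, $g$ is holomorphic, vanishing only at the cusp class of $1/t$, with order $m_{t,N}/24>0$, so $g\neq1$. The exponent vector of $h$ is $B_N(Y-e_t)$, where $e_t$ is the standard basis vector, and $Y-e_t\geq0$ because $Y_t\geq1$. Hence $h$ is holomorphic, and $\e^X=gh$.

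Since $\e^X$ is not factorizable on $\GN$, $g$ cannot be a proper factor of $\e^X$. As $g\neq1$, this forces $g=\e^X$, and then \eqref{6:50PM} gives $X=B_N(\un\hs,t)$. (Consistently, $h=1$, i.e.\ $Y=e_t$.) This yields the dichotomy.

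There is no real obstacle. The only points needing care are that $B_N$ is integral, which is exactly how the $m_{t,N}$ were chosen, and that $m_{t,N}>0$, so the sign of $Y_t$ matches the sign of the order at $1/t$.
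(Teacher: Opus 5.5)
Your proof is correct and is essentially the paper's own argument. The paper likewise passes to $Y=B_N^{-1}X$, uses $A_NB_N=M_N$ to see that holomorphy means $Y\geq0$, and observes that any coordinate $Y_t\geq1$ makes $\e^{B_N(\un\hs,t)}$ a factor of $\e^X$ on $\GN$, which non-factorizability then forces to equal $\e^X$; you have filled in details the paper leaves implicit (integrality of $B_N$, $g\neq1$, and the appeal to \eqref{6:50PM}).
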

\begin{co}
For $X\in\Z^{\bb{D}_N}$, if $\e^X$ is a generic holomorphic eta quotient which is not factorizable on $\GN$, then $X\in B_N\cdot[0,1)^{\bb{D}_N}$.
\label{9:58AM}\end{co}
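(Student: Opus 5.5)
The corollary should follow directly from Lemma~\ref{b2}. The idea is to rule out the second alternative in that lemma by showing that the corresponding eta quotients are axial, hence not generic.

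Let $X\in\Z^{\bb{D}_N}$ be such that $\e^X$ is a generic holomorphic eta quotient which is not factorizable on $\GN$. Since $\e^X$ is in particular holomorphic and not factorizable on $\GN$, Lemma~\ref{b2} applies. So either $X\in B_N\cdot[0,1)^{\bb{D}_N}$, or $X=B_N(\un\hs,t)$ for some $t\in\bb{D}_N$. It therefore suffices to show that the second alternative is incompatible with $\e^X$ being generic.

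Suppose $X=B_N(\un\hs,t)$ for some $t\in\bb{D}_N$. By \eqref{ordbncols}, the order of $\e^X$ at the cusp $1/t'$ of $\GN$ equals $m_{t,N}/24$ if $t'=t$, and $0$ otherwise.

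\begin{itemize}
\item By \eqref{27.04.2015.1}, the order of an eta quotient on $\GN$ at a cusp $a/t'$ depends only on $t'$. Hence the order of $\e^X$ at every cusp in the cusp class represented by $1/t'$ equals its order at $1/t'$.
\item The reciprocals $\{1/t'\}_{t'\in\bb{D}_N}$ represent all the cusp classes of $\GN$.
\item By \eqref{mtn}, $m_{t,N}>0$.
\end{itemize}

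It follows that $\e^X$ vanishes at the cusp class represented by $1/t$ and at no other cusp class of $\GN$. Thus $\e^X$ is axial, contradicting the assumption that it is generic. Hence $X\in B_N\cdot[0,1)^{\bb{D}_N}$.

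There is no genuine obstacle here. The only points needing care are that the order at a cusp class is well defined, which follows from the formula \eqref{27.04.2015.1}, and that $m_{t,N}$ is strictly positive. Together these make $\e^{B_N(\un\hs,t)}$ a holomorphic eta quotient vanishing at exactly one cusp class.
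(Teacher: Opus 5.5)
Your proposal is correct and is essentially the paper's argument. The paper states this corollary without a written proof, as an immediate consequence of Lemma~\ref{b2}; the reason is exactly yours: by \eqref{ordbncols}, each $\e^{B_N(\un\hs,t)}$ vanishes only at the cusp class of $1/t$, so it is axial and is excluded by the genericity hypothesis.
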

We define the eta quotient 
$\mathcal{F}_N$ by
\begin{equation}
F_N:=\prod_{t\in\D}\e^{B_N(\un\hs,\hs t)}.
\label{5:25AM}\end{equation}
\begin{lem}[Lemma~1 from \cite{B3}]
For $N\in\N$, let $\cl{F}_N$ be as defined above. Then for $X\in\ZD$, both the eta quotients $\e^X$ and $\cl{F}_N/\e^X$ are holomorphic if and and only if $X\in B_N\cdot [0,1]^{\bb{D}_N}$.
\label{5:26AM}\end{lem}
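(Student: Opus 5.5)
Write $\mathcal{F}_N=\eta^{F}$ with $F:=\sum_{t\in\bb{D}_N}B_N(\un\hs,t)=B_N\mathbf{1}$, where $\mathbf{1}\in\Z^{\bb{D}_N}$ is the all-ones vector. The proof is a direct translation of both holomorphy conditions into conditions on the coordinates $Y:=B_N^{-1}X\in\Q^{\bb{D}_N}$. This uses only \eqref{28.04}, \eqref{11:53AM} and \eqref{28.08.2015A}.

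First, since $B_N=A_N^{-1}M_N$, we get $A_NB_N=M_N$. This is a diagonal matrix whose diagonal entries $m_{t,N}$ are all positive. For any $X\in\ZD$, set $Y:=B_N^{-1}X$, which is well defined because $B_N$ is invertible over $\Q$. Then
\begin{equation}
A_NX=A_NB_NY=M_NY,\nonumber
\end{equation}
so $(A_NX)_t=m_{t,N}Y_t$ for each $t\in\bb{D}_N$. By \eqref{28.04}, $\eta^X$ is holomorphic iff $A_NX\geq0$. Since every $m_{t,N}>0$, this holds iff $Y\geq0$, that is, iff $X\in B_N\cdot[0,\infty)^{\bb{D}_N}$.

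Second, $\mathcal{F}_N/\eta^X=\eta^{F-X}$ by the multiplicativity of $X\mapsto\eta^X$. We have $B_N^{-1}(F-X)=\mathbf{1}-Y$. Applying the first step to $F-X$ shows that $\mathcal{F}_N/\eta^X$ is holomorphic iff $\mathbf{1}-Y\geq0$, that is, iff $Y_t\leq1$ for every $t$.

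Combining the two steps, both quotients are holomorphic iff $0\leq Y_t\leq1$ for all $t$, i.e.\ iff $X=B_NY$ with $Y\in[0,1]^{\bb{D}_N}$. This is exactly the claimed condition $X\in B_N\cdot[0,1]^{\bb{D}_N}$.

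There is no substantial obstacle. The only point needing care is the bookkeeping that $A_NB_N$ is diagonal with strictly positive diagonal. This follows from \eqref{ordbncols}, or directly from the definition of $m_{t,N}$ as a positive integer. Integrality of $F$ is automatic, since each column of $B_N$ is integral by the choice of $m_{t,N}$.
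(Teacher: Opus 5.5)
Your proof is correct and follows the same route as the argument behind the cited lemma (the paper uses the same reasoning around \eqref{ordbncols} and Lemma~\ref{b2}). Since $A_NB_N=M_N$ is diagonal with positive entries, holomorphy of $\eta^X$ is equivalent to $B_N^{-1}X\geq0$, and applying this to $\mathcal{F}_N/\eta^X=\eta^{B_N\mathbf{1}-X}$ gives $B_N^{-1}X\leq\mathbf{1}$.
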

\begin{co}For $N\in\N$ and $X\in\ZD$, both the eta quotients $\e^X$ and $\cl{F}_N/\e^X$ are cuspidal if and and only if $X\in B_N\cdot (0,1)^{\bb{D}_N}$.
\label{5:46AM}\end{co}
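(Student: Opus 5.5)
The plan is to derive Corollary~\ref{5:46AM} directly from Lemma~\ref{5:26AM} and the description \eqref{ordbncols} of the orders of the columns of $B_N$. The key is to change variables. For $X\in\ZD$, set $Y:=B_N^{-1}X\in\Q^{\D}$; this is well defined because $B_N$ is invertible over $\Q$. Since $B_N=A_N^{-1}M_N$, we get $A_NX=M_NY$. By \eqref{28.04}, for every $t\in\D$ this gives
\[
\ord_{1/t}(\e^X;\GN)=\frac{m_{t,N}}{24}\,Y_t .
\]

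Next I will compute the orders of the complementary quotient. By \eqref{5:25AM}, $\cl{F}_N=\e^{B_N\mathbf{1}}$, where $\mathbf{1}$ is the all-ones vector. Hence, by \eqref{ordbncols} and linearity, $\ord_{1/t}(\cl{F}_N;\GN)=m_{t,N}/24$. Consequently
\[
\ord_{1/t}(\cl{F}_N/\e^X;\GN)=\frac{m_{t,N}}{24}\,(1-Y_t).
\]

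Now I reduce cuspidality to the cusp-class representatives. By \eqref{27.04.2015.1}, the order of an eta quotient on $\GN$ at a cusp $a/t$ depends only on $t$. So an eta quotient on $\GN$ vanishes at all cusps if and only if its orders at $1/t$ are strictly positive for all $t\in\D$. Since every $m_{t,N}>0$, both $\e^X$ and $\cl{F}_N/\e^X$ are cuspidal if and only if $0<Y_t<1$ for all $t$. That is exactly the condition $X=B_NY\in B_N\cdot(0,1)^{\D}$.

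The argument is the same as for Lemma~\ref{5:26AM}, with strict inequalities in place of non-strict ones. The only points needing care are two. First, the cusps $a/t$ with the same $t$ share an order, which \eqref{27.04.2015.1} gives. Second, the positivity of $m_{t,N}$, which is immediate from \eqref{mtn}. Neither step is a real obstacle.
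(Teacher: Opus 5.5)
Your proof is correct and is essentially the argument the paper intends. The paper gives no separate proof: it treats this corollary as the strict-inequality version of Lemma~\ref{5:26AM}, and both rest on the fact that $A_NB_N=M_N$ is diagonal with positive entries, so that with $Y=B_N^{-1}X$ one gets $\ord_{1/t}(\e^X;\GN)=m_{t,N}Y_t/24$ and, by \eqref{ordbncols}, $\ord_{1/t}(\cl{F}_N/\e^X;\GN)=m_{t,N}(1-Y_t)/24$ for every $t\in\D$, exactly as you wrote.
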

From the multiplicativity of $m_{t,N}$ in $N$ and $t$
and from the multiplicativity of $A_N^{-1}(d,t)$ in $N$, $d$ and $t$  (see (\ref{13May})),
it follows that $B_N(d,t)$ 
is also multiplicative in $N$, $d$ and $t$. 
That implies
\begin{equation}
 B_N=\bigotimes_{\substack{p^n\|N\\\text{$p$ prime}}}
 B_{p^n}.
\label{12:24AM}\end{equation}
In particular, for a prime $p$, 
from (\ref{28.08.2015A}) and 
(\ref{r1}), we have 
 \begin{equation}
 B_{p^n}=
\begin{pmatrix}
\hspace{6pt}p &\hspace*{-6pt}-p &  
& &  & \\
\vspace{5pt}\hspace*{-1pt}-1 & \hspace*{-2pt}p^2+1 &\hspace*{-3pt}-p 
& &\textnormal{\Huge 0} & \\
\vspace{5pt}&\hspace*{-6pt}-p & \hspace*{-4pt}p^2+1 &\hspace*{-2pt}-p 
 &  &  \\
 &  &\hspace*{-5pt}\ddots 
 &\hspace*{-6pt}\ddots & \ddots & \\
\vspace{5pt}\hspace{2pt} &\textnormal{\Huge 0}  &  
 &\hspace*{-7pt}-p & \hspace*{-4pt}p^2+1 &\hspace*{-4pt}-1\hspace{2pt}\\
\hspace{2pt} &  & 
&  &\hspace*{-6pt}-p & p\hspace{2pt}
\end{pmatrix},
\label{equ1}
\end{equation}
i.e. $B_{p^n}$ is a tridiagonal matrix, the nonzero entries in each of whose columns except the ones indexed by $1$ and $p^n$
are $-p,p^2+1$ and $-p$.


\

\section{Proofs of the results}

{\flushleft\textit{Proof of Theorem~\ref{sth}.}}
Follows from Corollary~\ref{9:58AM} and Lemma~\ref{00:36AM}.\qed

\

{\flushleft\textit{Proof of Corollary~\ref{17:02PM}.}} Let $\lambda_N$ denote the cardinality of $\Lambda_N$.
Then it follows from Definition~\ref{9:22AM} that if $N=\prod_{p|N}p^{n_p}$, then
\begin{equation}\lambda_N=\underset{p|N}{\prod}(n_p+1)-\underset{p^2|N}{\prod}(n_p-1),
\label{6:04AM}\end{equation}
where an empty product equals 1.
Let $r$ denote the number of distinct prime divisors of $N$. 
From \eqref{6:04AM}, we obtain 
\begin{equation}\lambda_N\leq\underset{p|N}{\prod}(n_p+1)-\underset{p|N}{\prod}(n_p-1)\leq 2\Big(\sum_{p|N}n_p\Big)^{r-1},
\label{00:12AM}\end{equation}
where the last inequality follows by 
induction on $r$.
Since 
$$\sum_{p|N}n_p\leq\log_2 N,$$
the claim follows from Theorem~\ref{sth} and \eqref{00:12AM}.
\hfill\qed

\ 
 
{\flushleft{\textbf{Aside:}}}  By $\mathrm{d}(n)$, we denote the number of divisors of $n$. Then \eqref{00:12AM} implies:
\begin{equation}
\mathrm{d}(N)\leq2(\log_2 N)^{r-1}+\underset{p|N}{\prod}(n_p-1)
\label{6:08PM}\end{equation}
\begin{co}If $N$ has a unitary prime divisor and if the number of distinct prime divisors of $N$ is $r$, then 
$$\mathrm{d}(N)\leq 2(\log_2 N)^{r-1}.$$
Moreover, if $N$ is odd, 
then we have $\mathrm{d}(N)\leq 2(\log N)^{r-1}.$
\end{co}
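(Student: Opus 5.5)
The plan is to specialize inequality \eqref{6:08PM}, or equivalently \eqref{00:12AM}, which is already established in the proof of Corollary~\ref{17:02PM}. Write $N=\prod_{p|N}p^{n_p}$ and let $r$ be the number of distinct prime divisors of $N$. The hypothesis that $N$ has a unitary prime divisor means there is a prime $p_0$ with $p_0\|N$, that is, $n_{p_0}=1$. Then the factor $n_{p_0}-1$ vanishes, so
$$\underset{p|N}{\prod}(n_p-1)=0.$$
Since $\mathrm{d}(N)=\prod_{p|N}(n_p+1)$, inequality \eqref{00:12AM} (its middle expression is exactly $\mathrm{d}(N)-\prod_{p|N}(n_p-1)$) gives
$$\mathrm{d}(N)\leq 2\Big(\sum_{p|N}n_p\Big)^{r-1}.$$
This is the key intermediate bound. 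Everything afterwards reduces to bounding $\sum_p n_p$ by a logarithm of $N$.

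For the first claim, use $\sum_{p|N} n_p\leq \sum_{p|N} n_p\log_2 p=\log_2 N$, which holds because $\log_2 p\geq 1$ for every prime $p$. Monotonicity of $x\mapsto x^{r-1}$ on $[0,\infty)$ (trivially true when $r=1$) then gives $\mathrm{d}(N)\leq 2(\log_2 N)^{r-1}$.

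For the second claim, assume $N$ is odd. Every prime $p|N$ then satisfies $p\geq 3>e$, so $\log p>1$. Hence
$$\sum_{p|N} n_p\leq \sum_{p|N} n_p\log p=\log N,$$
and the same monotonicity argument gives $\mathrm{d}(N)\leq 2(\log N)^{r-1}$. The degenerate case $r=1$, where $N=p$ is a prime, gives $\mathrm{d}(N)=2$ and is consistent with both bounds.

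The only real content is the inequality $\prod_{p|N}(n_p+1)-\prod_{p|N}(n_p-1)\leq 2(\sum_{p|N} n_p)^{r-1}$ from \eqref{00:12AM}, which the paper says follows by induction on $r$. I take it as given. If it needed re-verification, I would do the inductive step as follows. Set $P_\pm=\prod_{i<r}(n_i\pm1)$. Then
$$(n_r+1)P_+-(n_r-1)P_-=n_r(P_+-P_-)+(P_++P_-).$$
Bound $P_+-P_-$ by the induction hypothesis, and bound $P_++P_-\leq 2P_+\leq 2(S')^{r-1}$ by AM--GM, where $S'=\sum_{i<r}n_i$. Finally combine the two terms using $n_r(S')^{r-2}+(S')^{r-1}\leq(S'+n_r)^{r-1}$. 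I expect this recombination to be the main technical check, and I anticipate no other obstacle.
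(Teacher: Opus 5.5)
Your argument is correct and matches the paper, which states this corollary as an immediate consequence of \eqref{6:08PM} (i.e.\ \eqref{00:12AM}). A unitary prime makes $\prod_{p|N}(n_p-1)=0$, and then one uses $\sum_p n_p\le\log_2 N$, or $\sum_p n_p\le\log N$ when $N$ is odd. One small caveat in your optional re-verification of \eqref{00:12AM}: the AM--GM bound $P_+\le (S')^{r-1}$ fails at $r=2$, where $P_+=n_1+1>n_1=S'$; there you should use $P_++P_-=2n_1=2S'$ directly, and the induction then goes through.
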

\begin{co}Let $N>1$ have $r$ distinct prime divisors,
let $q$ be the least prime 
such that $q^2|N$, let $N':=N/\rad(N)$ and let 
$\rho:=\rad(N')$.
Then 
$$d(N)\leq 2(\log_2 N)^{r-1}+(\log_{\rho}N')^{\log_q\rho}.$$
\end{co}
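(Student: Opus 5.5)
The plan is to start from the inequality \eqref{6:08PM}, which already gives
$$\mathrm{d}(N)\leq 2(\log_2 N)^{r-1}+\prod_{p^2|N}(n_p-1).$$
It then suffices to bound the residual product by $(\log_\rho N')^{\log_q\rho}$. We may assume $N'>1$, since otherwise no $q$ exists and the residual product is empty.

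First I would rewrite everything in terms of $N'$. The primes $p$ with $p^2|N$ are exactly the primes dividing $\rho=\rad(N')$. Moreover $N'=\prod_{p|\rho}p^{m_p}$ with $m_p:=n_p-1\geq 1$, so the residual product is $\prod_{p|\rho}m_p$. Introduce the weights $w_p:=\log p/\log\rho$ for $p|\rho$. These are positive and satisfy $\sum_{p|\rho}w_p=1$. With them,
$$L:=\log_\rho N'=\sum_{p|\rho}w_p\,m_p,$$
so $L$ is a weighted arithmetic mean of the $m_p$, and in particular $L\geq1$.

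Next, the weighted AM--GM inequality gives $\prod_{p|\rho}m_p^{w_p}\leq L$. To pass from the weighted geometric mean to the plain product, set $e:=\log_q\rho$. Every prime $p|\rho$ satisfies $p\geq q$, so $w_p\geq \log q/\log\rho=1/e$, that is, $1/w_p\leq e$. Since $m_p\geq1$ we have $m_p^{w_p}\geq 1$, and therefore
$$m_p=\bigl(m_p^{w_p}\bigr)^{1/w_p}\leq\bigl(m_p^{w_p}\bigr)^{e}.$$
Multiplying over $p|\rho$ yields
$$\prod_{p|\rho}m_p\leq\Bigl(\prod_{p|\rho}m_p^{w_p}\Bigr)^{e}\leq L^{e}=(\log_\rho N')^{\log_q\rho},$$
where the last step uses $L\geq1$ and $e>0$. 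Combining this with \eqref{6:08PM} gives the claim.

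The only delicate point is this conversion from the weighted geometric mean to the unweighted product. It works because all $m_p\geq1$ and the weights are uniformly bounded below by $1/\log_q\rho$, which is exactly where the minimality of $q$ enters. The rest is bookkeeping, together with the degenerate case $N'=1$.
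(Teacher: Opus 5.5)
Your proposal is correct and is essentially the paper's own argument. The paper also starts from \eqref{6:08PM}, bounds $\prod_{p|N'}(n_p-1)$ by $\prod_{p|N'}(n_p-1)^{\log_q p}$ (exactly your step $m_p\le (m_p^{w_p})^{e}$, since $w_p e=\log_q p\ge 1$), and then applies weighted AM--GM with the unnormalized weights $\log_q p$ summing to $\log_q\rho$, of which your normalized weights $w_p$ are just a rescaling. Your replacement of $\prod_{p|N}(n_p-1)$ in \eqref{6:08PM} by $\prod_{p^2|N}(n_p-1)$ is harmless, since the former is either $0$ or equal to the latter, and the paper makes the same tacit switch to $p|N'$.
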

\begin{proof}
From \eqref{6:08PM}, it follows that $\mathrm{d}(N)-2(\log_2 N)^{r-1}$ is bounded above 
by the quantity on the left hand side of the inequality
\begin{equation}
\displaystyle{\prod_{p|N'}(n_p-1)^{\log_q p}}\leq\left(\frac{\displaystyle{\sum_{p|N'}(n_p-1)\log_q p}}{\displaystyle{\sum_{p|N'}\log_q p}}\right)^{\log_{q}\rad(N')},
\nonumber\end{equation}
which follows from the weighted AM-GM inequality\footnote{See 
also Theorem~1 in \cite{B8}}. 
\end{proof}

\

{\flushleft\textit{Proof of Corollary~\ref{ppi}.}}
Let $f$ be a generic irreducible holomorphic eta quotient of a prime power level $p^n$.
Since $\{1,1/p^n\}$ represents
a special set of cusp classes  
of $\Gamma_0(p^n)$,
the claim follows 
from Theorem~\ref{sth} and the fact that $1$ and~$0$ (resp. $1/p^n$ and $\infty$) represent the same cusp
of~$\Gamma_0(p^n)$.\hfill\qed

\ 

{\flushleft\textit{Proof of Corollary~\ref{a-l}.}}
If $p\|N$, then
a holomorphic eta quotient $f$ which is invariant under 
$\mathcal{AL}_{p,N}$
is 
reducible.
More precisely, it follows from \eqref{6:22PM} that
$f(z)=g(z)g(pz)$ for some holomorphic eta quotient $g$ on $\Gamma_0(N/p)$. 
Hence, if there exists an irreducible holomorphic eta quotient which is
an Atkin-Lehner eigenform of level $N$, then $N$ 
has no 
unitary prime divisor.

Let $N=\prod_{p|N}p^{n_p}$ have no unitary prime divisors, let
$\cl{G}_N$ denote the group of Atkin-Lehner involutions of level $N$ and let 
$\cl{C}_N$ denote the cusp classes of $\GN$. 
The action \eqref{11:18PM} of $\cl{G}_N$ on $\cl{C}_N$ partitions the latter into
\begin{equation}
    \prod_{p|N}\Big\lceil\frac{n_p+1}{2}\Big\rceil
\label{12:23PM}\end{equation}
orbits. 
Since $N$ has no unitary prime divisor, there exists
a special set of cusp classes $\cl{S}_N$ of $\GN$ defined  the sequence $\{\alpha_p\}_{p|N}$ 
such that $\alpha_p=0$ for all $p$. 
It follows from \eqref{11:18PM}  that
for each unitary divisor $u$ of $N$ and for all $p|N$, the Atkin-Lehner involution $\mathcal{AL}_{u,N}$ acts on $\cl{S}_N$ by interchanging $\alpha_p$ and $n_p$ if $p|u$ and keeping $\alpha_p$ and $n_p$
unchanged otherwise. Thus, $\cl{S}_N$ is closed under the action of $\cl{G}_N$ on $\cl{C}_N$. In other words, \eqref{11:18PM} defines actions of $\cl{G}_N$ on both 
the sets $\cl{S}_N$ and $\cl{C}_N\smallsetminus\cl{S}_N$.
Since under the action of $\cl{G}_N$, the set $\cl{C}_N\smallsetminus\cl{S}_N$ is 
partitioned into 
\begin{equation}
    \prod_{p|N}\Big\lceil\frac{n_p-1}{2}\Big\rceil
\label{12:56PM}\end{equation}
orbits, it follows from \eqref{12:23PM} that the action of $\cl{G}_N$ partitions $\cl{S}_N$ 
into 
\begin{equation}
    \prod_{p|N}\Big\lceil\frac{n_p+1}{2}\Big\rceil-
    \prod_{p|N}\Big\lceil\frac{n_p-1}{2}\Big\rceil
\label{12:59PM}\end{equation}
orbits.
Theorem~\ref{sth} implies that a generic irreducible holomorphic eta quotient $f$ on $\GN$ is uniquely identified by its orders of vanishing at the elements in $\cl{S}_N$.  
If $f$ is an Atkin-Lehner eigenform of level $N$, then it remains invariant under 
all the Atkin-Lehner involutions in $\cl{G}_N$. 
Now, \eqref{11:03AM} implies that the orders of vanishing of $f$ remains constant at every $\cl{G}_N$-orbit in $\cl{S}_N$.
In particular, the orders of vanishing of $f$ only at the \mbox{$\cl{G}_N$-orbit} representatives in $\cl{S}_N$ identifies $f$ uniquely.  
\eqref{12:59PM}. \hfill\qed

\ 

{\flushleft\textit{Proof of Corollary~\ref{fi}.}}
Let $f$ be a generic irreducible holomorphic eta quotient and a Fricke eigenform of a prime power level $p^n$. 
The claim then follows 
from Corollary~\ref{ppi}, because while $f$
remains invariant under the Fricke involution $W_{p^n}$, 
the action of $W_{p^n}$ on 
the cusp classes of $\Gamma_0(p^n)$  (see \eqref{11:18PM} and \eqref{11:03AM})
interchanges the cusp classes represented by 1 and $1/p^n$, which 
are the same as the cusp classes of 0 and $\infty$, respectively.\hfill\qed 

\

{\flushleft\textit{Proof of Theorem~\ref{prp1}.}} Follows from \S\ref{17.6Sept} and \S\ref{12:40pm}.\qed

\ 

{\flushleft\textit{Proof of Corollary~\ref{snld}.}}
From \eqref{hpn}, 
we see that the weight of 
$\mathcal{H}_{p,n}$ is\footnote{See Conjecture 2.($b$) in \cite{B3}.}
\begin{equation}
\begin{cases}
\displaystyle{\frac{n-1}2(p-1)^2-\frac{n-1}{2}(p-1)+1}&\text{ if $n$ is odd}\\ \\
\displaystyle{\frac{n-1}2(p-1)^2-\frac{n}{4}(p-1)+\frac12}&\text{ if $n$ is even.}
\end{cases}
\label{2:35AM}\end{equation} 
Hence, for even $p$ and $n$, the claim follows from Theorem~\ref{prp1}.\qed 

\

{\flushleft\textit{Proof of Corollary~\ref{12:42PM}.}} Follows from Theorem~\ref{prp1} and \eqref{2:35AM}.\qed

\ 

{\flushleft\textit{Proof of Corollary~\ref{cor2}.}}
 Corollary~5 in \cite{B5} states that each rescaling and every Atkin-Lehner involution
of an irreducible holomorphic eta quotient of a prime power level is irreducible. Hence, we obtain Corollary~\ref{cor2}.$(a)$ and $(b)$ 
as immediate consequences of 
Theorem~\ref{prp1}.

Since $\mathcal{H}_{p,n}$ is a Fricke eigenform
of level $p^n$, for all $M\in p^n\Z$, every Atkin-Lehner involution of $\mathcal{H}_{p,n}$ on $\Gamma_0(M)$ is just a rescaling of $\mathcal{H}_{p,n}$.
Since the composition of finitely many rescalings of $\mathcal{H}_{p,n}$ is only another rescaling of it,
Corollary~\ref{cor2}$.(b),(c),(d)$ and $(e)$ follow from 
Corollary~\ref{cor2}$(a)$\footnote{For an 
arbitrary irreducible holomorphic eta quotient of a prime power level, 
the analog
of Corollary~\ref{cor2}.$(e)$ also follows
in the 
same way as
the proof of Corollary~5 in \cite{B5}.}.
\hfill\qed

\section{The key lemma}
\label{5:19AM}
 
\begin{lem}Let $\cl{S}_N$ be a special set of cusps of $\GN$, let $B_N\in\Z^{\D\times\D}$ be as defined in \eqref{28.08.2015A} and let $$X\in B_N\cdot[0,1)^{\bb{D}_N}\cap\Z^{\bb{D}_N}$$ be such that 
$\e^X$ is a holomorphic eta quotient on $\GN$. Then $\e^X$ is uniquely identified by its 
orders of vanishing at the cusp classes in $\cl{S}_N$.
\label{00:36AM}\end{lem}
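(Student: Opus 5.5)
The plan is to turn the statement into a claim about integrality of a vector supported off $\Lambda_N$. Suppose $X,X'\in B_N\cdot[0,1)^{\bb{D}_N}\cap\Z^{\bb{D}_N}$ give holomorphic eta quotients with the same orders of vanishing at the cusp classes in $\cl{S}_N$. Write $X=B_NY$ and $X'=B_NY'$ with $Y,Y'\in[0,1)^{\bb{D}_N}$.

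By \eqref{28.04} and \eqref{28.08.2015A} we have $A_NX=M_NY$. Hence $24\cdot\ord_{1/t}(\e^X;\GN)=m_{t,N}Y_t$, and since $m_{t,N}\neq0$ the orders at $\cl{S}_N$ determine $Y_t$ for every $t\in\Lambda_N$. So $Z:=Y-Y'$ vanishes on $\Lambda_N$ and satisfies $Z\in(-1,1)^{\bb{D}_N}$. It also satisfies $B_NZ=X-X'\in\Z^{\bb{D}_N}$. It suffices to show that these conditions force $Z=0$. Once $Z$ is shown to be integral, $|Z_t|<1$ gives this at once. So the core claim is:
\begin{center}
\emph{if $Z\in\Q^{\bb{D}_N}$ is supported on $\bb{D}_N\smallsetminus\Lambda_N$ and $B_NZ\in\Z^{\bb{D}_N}$, then $Z\in\Z^{\bb{D}_N}$.}
\end{center}

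To prove the core claim I will use the tensor structure \eqref{12:24AM} and the tridiagonal form \eqref{equ1} of each factor $B_{p^{n_p}}$. The support $\bb{D}_N\smallsetminus\Lambda_N$ is the box $\prod_p\{p^{\beta}:\alpha_p<\beta<n_p\}$. Fix a prime $q\mid N$ and work $q$-adically; integrality at every $q$ then gives integrality.

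In the $p$-th tensor factor, look at the row indexed by $p^{\alpha_p}$ restricted to the columns of the box. This row meets the box only in the column $p^{\alpha_p+1}$, and there the entry is $-p$: for $\alpha_p=0$ it is the entry $B_{p^{n}}(1,p)=-p$, and for $\alpha_p=1$ it is $B_{p^n}(p,p^2)=-p$. Similarly, the row $p^{n_p}$ meets the box only in the column $p^{n_p-1}$, with entry $-p$. Every row with $\alpha_p<\beta<n_p$ has the tridiagonal pattern $-p,\ p^2+1,\ -p$, or one of the modified endpoint patterns.

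The idea is a $q$-adic descent. Let $v=\min_t v_q(Z_t)$ and suppose $v<0$. Choose an index $t$ in the box attaining $v$ and extremal in a suitable lexicographic order. Then examine the row $d$ of $B_NZ$ where each coordinate $p\neq q$ is placed at a boundary position of the box (at $\alpha_p$ or at $n_p$), and the $q$-coordinate is placed at $t$'s own position. In the $q$-direction the diagonal coefficient is $q^2+1$, which is a $q$-adic unit, while the off-diagonal neighbours carry a factor $q$. In the $p\neq q$ directions the boundary rows contribute only single entries $-p$, which are $q$-adic units. Extremality of $t$ should then make the term $Z_t$ the unique summand of minimal valuation in $(B_NZ)_d$. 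That would contradict $B_NZ\in\Z^{\bb{D}_N}$.

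The main obstacle is making this unique-minimum argument airtight. Two points need care. First, the endpoint columns of \eqref{equ1} have entries $p,-1$ rather than $-p,p^2+1,-p$; in particular $B_{q^n}(q^{n-1},q^n)=-1$, which is a unit. Second, the case $\alpha_q=1$ may bring in such unit entries next to the box. What I would try first is to order the box in the $q$-direction from the end where all neighbouring entries are divisible by $q$, and to check by hand the one-prime case $N=q^n$. In that case the claim is that a rational vector supported on $q^{\alpha+1},\dots,q^{n-1}$ with $B_{q^n}Z$ integral is itself integral. One proves this by peeling the coordinates one at a time and using that $q^2+1$ is invertible mod $q$, and that $p$ is invertible mod $q$ for $p\ne q$. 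After that, one tensors this step with the boundary rows of the other factors.
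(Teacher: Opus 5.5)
Your reduction is correct, and it differs from the paper's route. Since $A_NX=M_NY$ for $Y=B_N^{-1}X$, the orders on $\cl{S}_N$ fix $Y_t$ for $t\in\Lambda_N$. The lemma then amounts to your core claim, i.e.\ injectivity of $B_N^{-1}\Z^{\D}/\Z^{\D}\to(\Q/\Z)^{\Lambda_N}$, which the paper obtains from the Smith normal form and the unimodularity of $E_N$. The gap is in your proof of the core claim, which carries all the content. There are two problems.

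\textbf{The support is misdescribed.} $\D\smallsetminus\Lambda_N$ is not your box. By \eqref{12:00PM} it is $\prod_pJ_p$ with $J_p=\{p^\beta:\beta\in[0,n_p]\smallsetminus\{\alpha_p,n_p\}\}$, so whenever $\alpha_p=1$ the exponent $\beta_p=0$ survives. For $N=6$ the complement is $\{1\}$, not $\emptyset$. For $\alpha_p=1$ the row $p$ of $B_{p^{n_p}}$ meets the support in the two columns $1$ and $p^2$ (entries $-1,-p$). So your boundary-row analysis and your planned one-prime check both concern the wrong set.

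\textbf{The multi-prime descent has no working mechanism.} A row $d$ with $d_p\in\{p^{\alpha_p},p^{n_p}\}$ sees only the single slice $t_p=p^{\alpha_p+1}$ (resp.\ $p^{n_p-1}$). If every minimiser $t$ has some $t_p$ strictly inside the box, then $Z_t$ occurs in none of your rows, and no lexicographic extremality moves it to the edge. Taking $d_p=t_p$ instead puts the factor $p^2+1$ on $Z_t$, which need not be a $q$-adic unit (e.g.\ $p=2$, $q=5$), while the $p$-neighbours get the unit coefficient $-p$. So there is no unique minimum either, and ``tensoring with the boundary rows'' only ever controls the boundary slices.

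\textbf{The fix.} Replace the descent by induction on the number of primes. Write $N=p^nM$ with $p\nmid M$, so that $\D\smallsetminus\Lambda_N=J_p\times(\bb{D}_M\smallsetminus\Lambda_M)$. Write $Z=\sum_ie_{p^i}\otimes Z_i$, where each $Z_i\in\Q^{\bb{D}_M}$ is supported on $\bb{D}_M\smallsetminus\Lambda_M$ and $Z_i=0$ for $p^i\notin J_p$. By \eqref{12:24AM}, $(B_NZ)_{(p^j,d)}=\sum_iB_{p^n}(p^j,p^i)(B_MZ_i)_d$. Hence for each $d\in\bb{D}_M$ the vector $w=((B_MZ_i)_d)_i$ is supported on $J_p$ and $B_{p^n}w$ is integral. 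The one-prime case gives $B_MZ_i\in\Z^{\bb{D}_M}$ for all $i$, and the induction hypothesis gives $Z_i\in\Z^{\bb{D}_M}$.

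\textbf{The one-prime case.} With the correct $J_p$, this is your peeling argument. Write $z_\beta$ for the coordinate at $p^\beta$, and check $q$-integrality for each prime $q$.
\begin{itemize}
\item For $q\neq p$, start from row $p^n$, which reads $-pz_{n-1}$, and descend through the entries $-p$ to get $z_{\alpha_p+1},\dots,z_{n-1}$.
\item For $q=p$, the rows and columns indexed by $p^{\alpha_p+1},\dots,p^{n-1}$ form a tridiagonal block $\equiv I\pmod p$. The other columns meeting these rows carry zero coordinates.
\item If $\alpha_p=1$, row $p$ then reads $-z_0-pz_2$ (or $-z_0$ if $n\le2$), which gives $z_0$.
\end{itemize}
This proves the lemma using only the tridiagonal shape \eqref{equ1}. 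It needs neither the explicit Smith forms of \S\ref{mi} nor the block-determinant computation for $E_N$. In exchange, the paper's route also yields an explicit parametrisation of $B_N^{-1}\Z^{\D}/\Z^{\D}$.
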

\begin{proof}

From \eqref{12:24AM}, we have
\begin{equation}
 B_N=\bigotimes_{\substack{p^n\|N\\\text{$p$ prime}}}B_{p^n}.
\end{equation}
By elementary row and column operations\footnote{i.e. by swapping rows (resp. columns) and subtracting a multiple of a row (resp. a column) from another.},
each $B_{p^n}$ can be reduced
to a diagonal matrix\footnote{i.e. the Smith Normal Form of $B_{p^n}$ (see~\cite{a}).} $D_{p^n}$ with integer entries.
In other words,
there exist $U_{p^n}, V_{p^n}\in\operatorname{GL}_{n+1}(\bb{Z})$ 
such that $D_{p^n}=U_{p^n} B_{p^n} V_{p^n}$. In \S\ref{mi}, we provide 
the matrices $U_{p^n}$ and $V_{p^n}$ explicitly\footnote{
Up to 
rearrangement of the
diagonal 
entries 
of $D_{p^n}$, which 
is equivalent to
permutation of the 
rows of $U_{p^n}$ and the columns of $V_{p^n}$ accordingly.}
such that
\begin{equation}
D_{p^n}=\operatorname{diag}(1,1,\hdots,1,p^{n-1},p^{n-1}(p^2-1)).
\label{6:48AM}\end{equation}
Define 
\begin{equation}
U_N:=\bigotimes_{\substack{p^n\|N\\\text{$p$ prime}}}U_{p^n}, \ \ V_N:=\bigotimes_{\substack{p^n\|N\\\text{$p$ prime}}}V_{p^n} \text{ \ and \ } D_N:=\bigotimes_{\substack{p^n\|N\\\text{$p$ prime}}}D_{p^n}.
\label{10:20AM}\end{equation}
Then $U_N,V_N\in\operatorname{GL}_{\mathrm{d}(N)}(\Z)$ and 
\begin{equation}D_N=U_NB_NV_N.\label{9:49AM}\end{equation}
In particular, since $U_{N}$ is invertible over $\Z$,
left multiplication by $U_{N}^{-1}$ induces an automorphism of $\Z^{\bb{D}_{N}}$, i.e.
$U_{N}^{-1}\Z^{\bb{D}_{N}}=\Z^{\bb{D}_{N}}$.
It follows from \eqref{9:49AM} that 
\begin{equation}
    B^{-1}_N\Z^{\bb{D}_N}= B^{-1}_NU_N^{-1}\Z^{\bb{D}_N}=V_ND^{-1}_N\Z^{\bb{D}_N}.
\label{9:59AM}\end{equation}
Let $X\in B_N\cdot[0,1)^{\bb{D}_N}\cap\Z^{\bb{D}_N}$ be such that $\e^X$ is 
holomorphic.
Then 
\begin{equation}
Y:=B_N^{-1}X\in[0,1)^{\bb{D}_N}\cap B^{-1}_N\Z^{\bb{D}_N}=[0,1)^{\bb{D}_N}\cap V_ND^{-1}_N\Z^{\bb{D}_N},
\label{2:33AM}\end{equation}
where the last equality follows from \eqref{9:59AM},
Since $[0,1)^{\bb{D}_N}\cap V_ND^{-1}_N\Z^{\bb{D}_N}$ is a complete set of the coset representatives of $\Z^{\bb{D}_N}$ in $V_ND^{-1}_N\Z^{\bb{D}_N}$, there is natural a bijection between 
$[0,1)^{\bb{D}_N}\cap V_ND^{-1}_N\Z^{\bb{D}_N}$
and $(V_ND^{-1}_N\Z^{\bb{D}_N})/\Z^{\bb{D}_N}$. In particular, since $D_N\in\Z^{\bb{D}_N\times\bb{D}_N}$ is a diagonal matrix, we have
\begin{equation}
    Y\equiv \sum_{d|N}\frac{\ell_d}{D_N(d,d)}V_N(\un\hs,d)\pmod{\Z^{\bb{D}_N}},
\label{10:01AM}\end{equation}
where $\ell_d\in\Z$ with $0\leq\ell_d< D_N(d,d)$ and 
$V_N(\un\hs,d)$ denotes the column of $V_N$ indexed by $d\in\D$. That implies, if $d\in\D$
is such that $D_N(d,d)=1$, then $\ell_d=0$. 
Let 
\begin{equation}
\Lambda_N':=\{d\in\D \ : \ D_N(d,d)>1\}
\label{7:58PM}\end{equation}
and let $C_{\Lambda_N'}\in\Z^{\D\times\Lambda_N'}$ be defined by 
$C_{\Lambda_N'}(t,d)=V_N(t,d)$ for all $t\in\D$ and $d\in\Lambda_N'$. Then
\eqref{10:01AM} is equivalent to
\begin{equation}
Y\equiv C_{\Lambda_N'}\widetilde{\ell}\pmod{\ZD},
\label{10:03AM}
\end{equation}
where
$\widetilde{\ell}\in[0,1)^{\Lambda_N'}$ be defined by $\widetilde{\ell}_d:=\ell_d/D_N(d,d)$ for all $d\in\Lambda_N'$. Also, for $\Lambda_N$ as in $\eqref{12:00PM}$, let $\widetilde{Y}\in[0,1)^{\Lambda_N}$ be defined by \begin{equation}
    \widetilde{Y}_t:=Y_t \ \text{ for all $t\in\Lambda_N$.}
\label{2:29AM}\end{equation}  
Let $\lambda_N:=|\Lambda_N|$.
It follows from \eqref{6:48AM}, \eqref{10:20AM} and \eqref{7:58PM} that
the cardinality of $\Lambda_N'$ is given by the quantity on the right hand side \eqref{6:04AM}. Thus, we obtain
$$|\Lambda_N'|=|\Lambda_N|=\lambda_N.$$
Let $E_N\in\Z^{\Lambda_N\times\Lambda_N'}$ be defined by
\begin{equation}
    E_N(t,d)=V_N(t,d) \  \text{ for all $t\in\Lambda_N$ and $d\in\Lambda_N'$.}
\label{1:12AM}\end{equation}
It follows from \eqref{10:03AM} that 
\begin{equation}
    \widetilde{Y}\equiv E_N\widetilde{\ell}\pmod{\Z^{\Lambda_N}}.
\label{1:38AM}\end{equation}
In Lemma~\ref{1:36AM}, we show that $E_N$ is invertible over $\Z$. So, $E_N^{-1}\Z^{\Lambda_N}=\Z^{\Lambda_N}$. Hence, \eqref{1:38AM} is equivalent to 
\begin{equation}
    \widetilde{\ell}\equiv E_N^{-1}\widetilde{Y}\pmod{\Z^{\Lambda_N}}.
\label{2:30AM}\end{equation}
Since $Y=B_N^{-1}X$, from  \eqref{28.08.2015A}, \eqref{11:53AM}, \eqref{28.04} and \eqref{2:29AM},
it follows that the entries of $\widetilde{Y}_t$ are the normalized orders of the 
holomorphic eta quotient $\e^X$ at the special set of cusp classes of $\GN$ which are represented by the reciprocals of the elements of $\Lambda_N$ (see \eqref{12:00PM}).
Hence, the claim follows from \eqref{2:30AM}, \eqref{10:03AM} and \eqref{2:33AM}.
\end{proof}

\begin{lem}Let $E_N$ be as in \eqref{1:12AM}. 
Then $E_N\in\operatorname{GL}_{\lambda_N}(\Z).$
\label{1:36AM}\end{lem}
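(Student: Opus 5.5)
The plan is to reduce the statement to a single prime power by a complementary-minor argument, and only then do a computation with the explicit matrices $V_{p^n}$ of \S\ref{mi}.

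\textbf{Local index sets.} Write $N=\prod_{p|N}p^{n_p}$. For each $p$, identify $\bb{D}_{p^{n_p}}$ with $\{0,\dots,n_p\}$ and set
\[
R_p:=\{\alpha_p,n_p\}, \qquad C_p:=\{j : D_{p^{n_p}}(p^j,p^j)>1\}.
\]
By \eqref{6:48AM}, $C_p$ is the set of the last two columns if $n_p\ge2$, and a single column if $n_p=1$. In the latter case $\alpha_p=1=n_p$, so $|R_p|=|C_p|$ in every case. Since $D_N=\bigotimes_p D_{p^{n_p}}$, an entry $D_N(d,d)$ exceeds $1$ exactly when some local factor does. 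Therefore
\[
\Lambda_N'=\D\smallsetminus\prod_p C_p^c \quad\text{and}\quad \Lambda_N=\D\smallsetminus\prod_p R_p^c,
\]
where the complements are taken inside each local factor.

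\textbf{Jacobi's identity.} Recall that $V_N=\bigotimes_p V_{p^{n_p}}\in\operatorname{GL}(\Z)$ by \eqref{10:20AM}. Jacobi's complementary minor identity gives
\[
\det V_N[I,J]=\pm\det V_N\cdot\det V_N^{-1}[J^c,I^c].
\]
Apply it with $I=\Lambda_N$ and $J=\Lambda_N'$. Then $\det E_N=\pm\det V_N^{-1}\bigl[\prod_p C_p^c,\prod_p R_p^c\bigr]$. Because $V_N^{-1}=\bigotimes_p V_{p^{n_p}}^{-1}$, this minor is the Kronecker product of the local blocks $V_{p^{n_p}}^{-1}[C_p^c,R_p^c]$. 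The determinant of a Kronecker product is a product of powers of the factor determinants. Applying Jacobi once more in each factor gives
\[
\det V_{p^{n_p}}^{-1}[C_p^c,R_p^c]=\pm\det V_{p^{n_p}}[R_p,C_p].
\]
Hence it suffices to prove the local statement: for every $p$ and for both admissible choices $\alpha_p\in\{0,1\}$, the block of $V_{p^n}$ with rows $\{p^{\alpha_p},p^n\}$ and columns $C_p$ has determinant $\pm1$.

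\textbf{The local statement (main obstacle).} I would first try to verify it by reading off the rows of the explicit $V_{p^n}$ from \S\ref{mi}. If that becomes messy, I would use a lattice reformulation instead. Unimodularity of the local block is equivalent to saying that the map $B_{p^n}^{-1}\Z^{n+1}/\Z^{n+1}\to(\Q/\Z)^{R_p}$, $Y\mapsto (Y_i)_{i\in R_p}$, is injective. Equivalently: if $Y\in B_{p^n}^{-1}\Z^{n+1}$ and $Y_i\in\Z$ for $i\in R_p$, then $Y\in\Z^{n+1}$. To prove this, use that $X=B_{p^n}Y$ is integral and that $B_{p^n}$ is tridiagonal as in \eqref{equ1}. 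The equations from the rows of $B_{p^n}$ then give a two-term recurrence for the entries of $Y$. Starting from integrality at $n$ (and at $\alpha_p$), this should propagate integrality step by step, using that the off-diagonal entries $-p$ and $-1$ are the only obstructions. The boundary rows, with entries $p,-p$ and $-1,p^2+1$, need separate checks for the cases $\alpha_p=0$ and $\alpha_p=1$, and I expect this bookkeeping to be the delicate point. Once the local statement holds, every factor in $\det E_N$ is $\pm1$, so $E_N\in\operatorname{GL}_{\lambda_N}(\Z)$.
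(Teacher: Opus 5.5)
Your argument is correct, and it takes a genuinely different route from the paper's.

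\textbf{The paper's route.} The paper argues by induction on the number of prime divisors of $N$. It writes $E_{p^nM}$ in block form, which uses the interior structure of $V_{p^n}$ (the block $L_{p^n}$ and the zero pattern around it). It then triangularizes by permuting column blocks. When $\alpha_p=0$ it needs an extra Schur-complement argument for the block $Z_M$.

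\textbf{Your route.} You use that $\Lambda_N$ and $\Lambda_N'$ are the complements of the product sets $\prod_pR_p^c$ and $\prod_pC_p^c$. Jacobi's identity therefore turns the non-product minor $E_N$ of $V_N$ into a product minor of $V_N^{-1}=\bigotimes_pV_{p^{n_p}}^{-1}$, which is a Kronecker product of square blocks. This gives the closed formula
\[
\det E_N=\pm\prod_{p\mid N}\bigl(\det E_{p^{n_p}}\bigr)^{\prod_{q\neq p}|C_q^c|}.
\]

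\textbf{Comparison.} Your route needs no case distinction on $\alpha_p$. It uses nothing about $V_{p^n}$ beyond its unimodularity and one $2\times2$ (or $1\times1$) block. The cost is invoking Jacobi's identity instead of elementary block-triangular determinants.

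\textbf{The local step is immediate.} What you call the main obstacle is in fact the paper's base case. For every $n\geq2$ (including $n=2,3$), the rows of $V_{p^n}$ indexed by $1$, $p$ and $p^n$, restricted to the columns $p^{n-1},p^n$, are $(1,p^n)$, $(1,p^{n-2})$ and $(0,1)$. Also $V_p(p,p)=1$. So the local block is $\left(\begin{smallmatrix}1&p^n\\0&1\end{smallmatrix}\right)$, $\left(\begin{smallmatrix}1&p^{n-2}\\0&1\end{smallmatrix}\right)$ or $[1]$.

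\textbf{Drop the lattice fallback.} The claimed equivalence is false; only the implication from unimodularity to injectivity holds. Injectivity of $B_{p^n}^{-1}\Z^{n+1}/\Z^{n+1}\to(\Q/\Z)^{R_p}$ depends only on $B_{p^n}$, whereas $E_{p^n}$ depends on the particular choice of $V_{p^n}$.

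For instance, take $n\geq2$ and make two elementary changes:
\begin{itemize}
\item add $p^{n-1}$ times the column $V_{p^n}(\un\hs,1)$ to the column $V_{p^n}(\un\hs,p^{n-1})$;
\item subtract the row of $U_{p^n}$ indexed by $p^{n-1}$ from its row indexed by $1$.
\end{itemize}
Because $D_{p^n}(1,1)=1$, the modified unimodular matrices still satisfy $U_{p^n}B_{p^n}V_{p^n}=D_{p^n}$. However, the block for $\alpha_p=0$ becomes $\left(\begin{smallmatrix}1+p^{n-1}&p^n\\0&1\end{smallmatrix}\right)$, which is not unimodular.

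So the fallback would only establish the injectivity that Lemma~\ref{00:36AM} actually uses. It would not prove the unimodularity of $E_N$ for the specific $V_N$, which is what this lemma asserts.
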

\begin{proof}
For a prime $p$ and an $n\in\N$, 
$V_{p^n}\in\operatorname{GL}_{n+1}(\Z)$ is given by (see \S\ref{mi}):
\begin{equation}
V_{p^n}=
\begin{cases}\begin{pmatrix}1&p\\0&1\end{pmatrix}&\text{if $n=1$}\\
\\
\begin{pmatrix}1&
    0\hspace{2pt}\cdots\hspace{2pt}0&1&p^n\\
    0& 0\hspace{2pt}\cdots\hspace{2pt}0
    &1&p^{n-2}\\\begin{matrix}0\\\vdots\\0\end{matrix}
&L_{p^n}&\begin{matrix}\ast\\\vdots\\\ast\end{matrix}&\begin{matrix}\ast\\\vdots\\ \ast\end{matrix}\\
 0&0\hspace{2pt}\cdots\hspace{2pt}0
    &0&1
    \end{pmatrix}
    &\text{otherwise,}
    \end{cases}
\label{5:55AM}\end{equation}
where $L_{p^n}\in\operatorname{GL}_{n-2}(\Z)$ 
is defined by
\begin{equation}L_{p^n}(i,j)=\begin{cases}
-p^{i-j}&\text{if  $i\geq j$,}\\  
0&\text{otherwise.}    
\end{cases}
\label{10:47AM}\end{equation}
From \eqref{12:00PM}, \eqref{7:58PM} and \eqref{6:48AM}, 
it follows that $\Lambda_p=\Lambda_p'=\{p\}$. Hence, from \eqref{5:55AM} 
we obtain that
$E_p=[V_p(p,p)]=[1]\in\operatorname{GL}_1(\Z).$

For $n>1$, it follows from \eqref{12:00PM} that 
$\Lambda_{p^n}=\{1,p^n\}\text{ \ or \ }\{p,p^n\}$
while \eqref{7:58PM} and \eqref{6:48AM} imply that $\Lambda_{p^n}'=\{p^{n-1},p^n\}.$
So, $\eqref{5:55AM}$ implies that
\begin{equation}
    E_{p^n}=\begin{pmatrix}1&p^n\\0&1\end{pmatrix} \text{ \ or \ }\begin{pmatrix}1&p^{n-2}\\0&1\end{pmatrix},
\end{equation}
both of which belong to $\operatorname{GL}_2(\Z)$ for all $n>1$.
Thus, the claim holds for all prime powers. 

Now, it suffices to show that if the claim holds for an $M\in\N$, then it also holds for $p^nM$ for all $n\in\N$, where $p$ is a prime such that $p\nmid M$. 

From $\eqref{10:20AM}$, we have $V_{p^nM}=V_{p^n}\otimes V_M$ and $D_{p^nM}=D_{p^n}\otimes D_M$. So, \eqref{6:48AM} implies that the diagonal\footnote{hence, a block diagonal matrix} matrix $D_{p^nM}\in\Z^{\mathbb{D}_{p^nM}\times\mathbb{D}_{p^nM}}$ is given by
\begin{equation}D_{p^nM}=\mathrm{diag}(D_M,D_M,\ldots,D_M,p^{n-1}D_M, p^{n-1}(p^2-1)D_M)
\label{8:21AM}\end{equation}
and from \eqref{5:55AM}, it follows that $V_{p^nM}\in\operatorname{GL}_{\mathrm{d}(p^nM)}(\Z)$ is of the form
\begin{equation}
V_{p^nM}=
\begin{cases}\begin{pmatrix}V_M&{\huge\ast}\\0&V_M\end{pmatrix}
&\text{if $n=1$}\\
\\
\begin{pmatrix}V_M&
    0&V_M&{\huge\ast}\\
    0& 0
    &V_M&{\huge\ast}\\
0&L_{p^n}\otimes V_M&{\huge\ast}&{\huge\ast}\\
 0&0
    &0&V_M
    \end{pmatrix}
    &\text{otherwise.}
    \end{cases}
\label{1:37PM}\end{equation}
It follows from \eqref{12:00PM}, \eqref{7:58PM} and \eqref{8:21AM} and \eqref{1:37PM} that  
$E_{p^nM}\in\Z^{\Lambda_{\Lambda_{p^nM}}\times\Lambda'_{\lambda_{p^nM}}}$ is of the form
\begin{equation}
E_{p^nM}=\begin{cases}
\begin{pmatrix}E_M&{\huge \ast}\\0&V_M\end{pmatrix}
&\text{if $n=1$}\\ \\
\begin{pmatrix}
    E_M&0&R_{\Lambda_M}&{\huge \ast}\\
    0&0&V_M&{\huge \ast}\\
    0&L_{p^n}\otimes E_M&{\huge \ast}&{\huge \ast}\\
    0&0&0&V_M
\end{pmatrix}
&\text{if $n>1$ and $\alpha_p=1$}\\ \\
\begin{pmatrix}
        C_{\Lambda_M'}&0&V_M&{\huge \ast}\\
    0&0&R_{\Lambda_M}&{\huge \ast}\\
    0&L_{p^n}\otimes E_M&{\huge \ast}&{\huge \ast}\\
    0&0&0&V_M
\end{pmatrix}
&\text{if $n>1$ and  $\alpha_p=0$}
\end{cases}
\label{10:35AM}\end{equation}
where 
$C_{\Lambda_M'}$ is the submatrix of $V_M\in\Z^{\mathbb{D}_M\times\mathbb{D}_M}$, 
composed of those columns of $V_M$ which are indexed by  $\Lambda_M'$ and $R_{\Lambda_M}$ is the submatrix of $V_M$, 
composed of those rows of $V_M$ which are indexed by  $\Lambda_M$. 

We note that $E_{pM}$ is already in a block upper triangular form, whereas
for $n>1$, after a set of column operations interchanging the second and third column
blocks 
of $E_{p^nM}$ in \eqref{10:35AM}, we also obtain a block upper triangular matrix with four diagonal blocks if $\alpha_p=1$, viz. $E_M, V_M,L_{p^n}\otimes E_M$ and $V_M$ 
 and three diagonal blocks if $\alpha_p=0$, viz. 
 \begin{equation}
 \begin{pmatrix}
        C_{\Lambda_M'}&V_M\\0&R_{\Lambda_M}\end{pmatrix}, \ L_{p^n}\otimes E_M \text{ \ and \ } V_M.
        \label{7:19AM}\end{equation}
Since by induction hypothesis, $E_M\in\operatorname{GL}_{\lambda_M}(\Z)$ and since
$L_{p^n}\in\operatorname{GL}_{n-2}(\Z)$ (see~\eqref{10:47AM}), it follows that 
$$L_{p^n}\otimes E_M\in\operatorname{GL}_{(n-2)\lambda_M}(\Z).$$
Since the determinant of a block upper triangular matrix is 
the product of the determinants of its diagonal blocks\footnote{See Lemma~1.25 in \cite{xz}.} 
and since $E_M,V_M$ and $L_{p^n}\otimes E_M$ are invertible over $\Z$,
the claim follows in both the cases where $n=1$ and $n>1$ with $\alpha_p=1$.

In the remaining case, where $n>1$ and $\alpha_p=0$, the diagonal blocks
after a set of column operations on $E_{p^nM}$, are given in
\eqref{7:19AM}. So, invertibility of $L_{p^n}\otimes\Z$ and $V_M$ over $\Z$ implies that
$\det E_{p^nM}=\pm\det Z_M$, where 
\begin{equation} Z_{M}:=\begin{pmatrix}
        C_{\Lambda_M'}&V_M\\0&R_{\Lambda_M}\end{pmatrix}.
\label{4:21PM}\end{equation}
Since the Schur complement\footnote{See \S1.11 in \cite{xz}.}
of $V_M$ is $-E_M$ in the matrix obtained
after interchanging  either the row blocks or the column blocks of $Z_M$ in \eqref{4:21PM},
it follows that $$\det Z_M=\pm(\det E_M).(\det V_M)=\pm1.$$ Thus, we obtain 
that 
$E_{p^nM}\in\operatorname{GL}_{\lambda_{p^nM}}(\Z).$ Hence, the claim follows.
\end{proof}

\section{Cuspidality of $\mathcal{H}_{p,n}$}
\label{17.6Sept}

We distinguish the cases, $p=2$ and $p\geq 3$.
\vspace{5pt}

{\flushleft\textbf{Case I.} ($p=2$)}
\vspace{4pt}

From  \eqref{hpn}, we obtain 
\begin{equation}
     \mathcal{H}_{2,n}=\begin{cases}\displaystyle{\frac{\eta_2^2}{\eta}\times\frac{\eta_{2^{n-1}}^2}{\eta_{2^n}}}&\text{ if $n$ is odd}\\ \\
     \displaystyle{\frac{\e_2^2\e_{2^{n-1}}
    }{
    \e\e_{2^n}
    }\prod_{s=1}^{n/2-1}\frac{\e_{2^{2s}}^2}{\e_{2^{2s-1}}}}&\text{ otherwise.}\\
     \end{cases}
\label{10:53PM}\end{equation}
It follows from \eqref{27.04.2015.1} that for all integers $0\leq k\leq n$ and $0\leq l<n$, 
\begin{equation}
 \ord_{1/{2^k}}\Big(\frac{\e_{2^{l+1}}^2}{\e_{2^{l}}};\Gm_0(2^n)\Big)\geq0 \  \text{ \ and \ } 
  \ord_{1/{2^k}}\Big(\frac{\e_{2^{n-1}}^2}{\e_{2^{n}}};\Gm_0(2^n)\Big)\geq0,
\label{23:13PM}\end{equation}
where equality holds if $k\leq l<n$ and $k=n$, respectively. Also, \eqref{27.04.2015.1} implies that for all integers $0\leq m<n$, we have
\begin{equation}
 \ord_{1/{2^m}}\Big(\frac{\e_{2^{n-1}}}{\e_{2^{n}}};\Gm_0(2^n)\Big)>0.
  \label{23:10PM}\end{equation}
From \eqref{10:53PM} and \eqref{23:13PM}, it follows that $\cl{H}_{2,n}$ is cuspidal
if $n$ is odd. 

If $n$ is even, then it follows from \eqref{10:53PM}, \eqref{23:13PM} and \eqref{23:10PM} that for all integers $0\leq m<n$, we have
\begin{equation}
    \ord_{1/{2^m}}\big(\cl{H}_{2,n};\Gm_0(2^n)\big)>0.
\label{11:23PM}\end{equation}
Since $\cl{H}_{2,n}$ is invariant under the Fricke involution of level $2^n$ and since the action of the Fricke involution on the cusp classes of $\Gm_0(2^n)$ interchanges the cusp classes represented by $1/2^n$ and $1$, it follows from \eqref{11:03AM} and \eqref{11:23PM} that 
\begin{equation}
\ord_{1/{2^n}}\big(\cl{H}_{2,n};\Gm_0(2^n)\big)=\ord_{1}\big(\cl{H}_{2,n};\Gm_0(2^n)\big)>0.
\end{equation}
Hence, $\cl{H}_{2,n}$ is also cuspidal 
if $n$ is even. 

\

{\flushleft\textbf{Case II.} ($p\geq3$)}

\

It suffices\footnote{Since $\mathcal{H}_{p,1}=\e\e_p$ is cuspidal for all $p$ 
and the cuspidality of $\mathcal{H}_{p,2}$ follows from \eqref{27.04.2015.1}.}
to consider $\mathcal{H}_{p,n}$ with $n\geq3$.
Let
$X\in\Z^{D_{p^n}}$ be such that
 $\e^{X}=\mathcal{H}_{p,n}$.
It follows from \eqref{28.04}
that $\mathcal{H}_{p,n}$ is cuspidal if and only if each entry 
in $A_{p^n}X\in\Z^{D_{p^n}}$ is positive.
First, we show that each entry of $A_{p^n}X$ except 
possibly the couple of entries indexed by $1$ and $p^n\in\bb{D}_{p^n}$ is positive.

For $n\geq3$ and for every integer
$j\in[0,n]$,
the exponent of $\e_{p^j}$ in $\mathcal{H}_{p,n}$ 
is larger than or equal to $(p-1)^2-p$
except the exponents of $\e$ and $\e_{p^n}$,
both of which are equal to $-(p-1)$. 
Since each entry of $A_{p^n}$ is positive, 
since the sum of the entries of $A_{p^n}$ having column indices $p^j$ with $j\in[1,n-1]$
in each row $A_{p^n}$ is larger than or equal to $p$ times the sum of the entries with column indices $1$ and $p^n$ in
that row (see~\eqref{23July} and \eqref{12:47PM})
and since $(p-1)^2-p>1-1/p $ for each odd prime~$p$, it follows that each entry of $A_{p^n}X$ is
positive,
except possibly 
the couple of entries indexed by $1$ and $p^n$.

Thus, 
it suffices to show
the positivity
the orders of vanishing of $\mathcal{H}_{p,n}$ at the cusps 1 and $1/p^n$ of $\Gm_0(p^n)$. 
Since the Fricke involution on $\Gm_0(p^n)$ interchanges these two cusps and since $\mathcal{H}_{p,n}$ is invariant under this Fricke involution,
its orders of vanishing at these two cusps of $\Gm_0(p^n)$ are the same
(see \eqref{11:03AM}).
Since $1/p^n$ and $\infty$ represent the same cusp of $\Gm_0(p^n)$ (see \eqref{minset}),
in order to show that $\mathcal{H}_{p,n}$ is cuspidal,
it suffices only to show that
$\ord_{\infty}\mathcal{H}_{p,n}$ is 
positive\footnote{Since the width of the cusp $\infty$ of $\Gm_0(n)$ is 1 for all $n\in\N$ (see~\cite{ds, ra}), for an eta quotient 
$f$ of level $N$ and for all $M\in N\Z$ we have $\ord_{\infty}(f;\Gm_0(N))=\ord_{\infty}(f;\Gm_0(M))$, which we simply
denote by $\ord_{\infty}f$.}. 
From \eqref{hpn}, we obtain
\begin{equation}
\mathcal{H}_{p,n+2}/\mathcal{H}_{p,n}=\begin{cases}
\dfrac{\e_{p^n}^{(p-1)^2+p}\e_{p^{n+1}}^{(p-1)^2}}{\e_{p^{n-1}}^p\e_{p^{n+2}}^{p-1}}&\text{ if $n$ is even}\\ \\
\dfrac{\e_{p^n}^{(p-1)^2}\e_{p^{n+1}}^{(p-1)^2+1}}{\e_{p^{n-1}}^p\e_{p^{n+2}}^{p-1}}&\text{ if $n$ is odd.}
\end{cases}
\nonumber\end{equation}
In particular, that implies $\ord_{\infty}(\mathcal{H}_{p,n+2}/\mathcal{H}_{p,n})=0$ for all $n\in\N$. It follows by induction on $n$ 
that 
\begin{equation}
\ord_{\infty}\mathcal{H}_{p,n}=\begin{cases}\dfrac{1}{24} &\text{ if $n$ is even}\\
\\
                                  \dfrac{p+1}{24} &\text{ if $n$ is odd.}
                                 \end{cases}
\label{ordinfty}\end{equation}
Hence, $\mathcal{H}_{p,n}$ is a 
cuspidal eta quotient for all prime $p$ and all $n\in\N$.\qed

\section{
Irreducibility of $\mathcal{H}_{p,n}$}
\label{12:40pm}

Given two holomorphic eta quotients $h$ and $h'$ on $\Gm_0(p^n)$ such that 
\begin{equation}\cl{H}_{p, n}=h\times h',\label{6:28PM}\end{equation}
we show that except for the cases in \eqref{redH}, $h\in\{1,\cl{H}_{p,n}\}$. So, neither $h$ nor~$h'$ is a proper factor of $\cl{H}_{p,n}$.
The irreducibility of $\cl{H}_{p,n}$ now follows from
Theorem~\hyperlink{E}{E}.

\begin{lem}
Let $p$ be a prime, $n\in\N$ and let
$\cl{F}_{p^n}$ be as defined in \eqref{5:25AM}. Unless $p^n=2$, the eta quotient
${\mathcal{F}_{p^n}}/{\mathcal{H}_{p,n}}$ is cuspidal. 
\label{6:18PM}\end{lem}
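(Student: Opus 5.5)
By Corollary~\ref{5:46AM}, applied with $N=p^n$, it suffices to show the following. Let $X\in\Z^{\bb{D}_{p^n}}$ with $\e^X=\cl{H}_{p,n}$. Then we need $Y:=B_{p^n}^{-1}X\in(0,1)^{\bb{D}_{p^n}}$. From \eqref{28.08.2015A}, \eqref{11:53AM} and \eqref{28.04} we have
$$Y_t=24\,\ord_{1/t}(\cl{H}_{p,n};\Gm_0(p^n))/m_{t,p^n}.$$
So the claim is equivalent to the two-sided bound
$$0<24\,\ord_{1/t}(\cl{H}_{p,n};\Gm_0(p^n))<m_{t,p^n}\quad\text{for all } t\mid p^n.$$
The lower bound is exactly the cuspidality of $\cl{H}_{p,n}$, proved in \S\ref{17.6Sept}. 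It remains to prove the upper bound. Since $\cl{H}_{p,n}$ is a Fricke eigenform, \eqref{11:03AM} shows that its orders at $1/p^i$ and $1/p^{n-i}$ agree. By \eqref{mtn}, $m_{p^i,p^n}=m_{p^{n-i},p^n}$. Hence it suffices to treat $0\le i\le n/2$.

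\emph{The endpoint cusps.} For $t=1$, the cusp $1$ is Fricke-equivalent to $1/p^n\sim\infty$. So by \eqref{ordinfty}, $24\,\ord_1\cl{H}_{p,n}$ equals $1$ or $p+1$. On the other hand $m_{1,p^n}=p^{n-1}(p^2-1)$. The inequality $p+1<p^{n-1}(p^2-1)$ fails only for $p^n=2$, where both sides equal $3$. The small cases $n=1,2$ can be checked directly from \eqref{hpn}, which leaves exactly the stated exception.

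\emph{The interior cusps.} For $0<i\le n/2$, formula \eqref{12:47PM} gives
$$24\,\ord_{1/p^i}\cl{H}_{p,n}=p^{\,n-i}S_i,\qquad S_i:=\sum_{j=0}^n p^{-|i-j|}X_{p^j}.$$
Moreover $m_{p^i,p^n}=p^{n-i}(p^2-1)$. So we must show $S_i<p^2-1$.

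For $p\ge3$, I would evaluate $S_i$ in closed form. The interior exponents in \eqref{hpn} are constant for odd $n$, and $2$-periodic for even $n$ (alternating $(p-1)^2-p$ and $(p-1)^2+1$). Thus $S_i$ is a sum of two geometric series centred at $j=i$, plus the boundary corrections coming from $X_1=X_{p^n}=-(p-1)$ and from $X_p$, $X_{p^{n-1}}$. The central contribution of the bulk exponents is roughly
$$\bigl((p-1)^2+O(1)\bigr)\frac{p+1}{p-1}\approx p^2-1-O(p).$$
The negative endpoint terms and the truncation of the series should pull $S_i$ strictly below $p^2-1$.

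For $p=2$ I would instead use the factorization \eqref{10:53PM} into the factors $\e_{2^{l+1}}^2/\e_{2^l}$ (and $\e_{2^{n-1}}^2/\e_{2^n}$). Each factor has a small, explicitly computable order at $1/2^i$, and these orders sum to give a bound below $3\cdot2^{n-i}$.

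\emph{Main obstacle.} I expect the main difficulty to be this interior inequality $S_i<p^2-1$, since the margin is only of order $p$. Indices $i$ near the ends ($i=1$ and $i=2$) are the most delicate, because there the large bulk exponent $X_p=p$ (or $X_p^{\text{even}}$) sits adjacent to the negative exponent $X_1$. The first thing I would try is to write $S_i=p^2-1-R_i$ exactly, using the recursion
$$\cl{H}_{p,n+2}=\cl{H}_{p,n}\cdot\frac{\e_{p^n}^{\ast}\,\e_{p^{n+1}}^{\ast}}{\e_{p^{n-1}}^{p}\,\e_{p^{n+2}}^{p-1}}$$
from \S\ref{17.6Sept}, and then prove $R_i>0$ by induction on $n$. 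The base cases $n=3,4$ would be verified directly.
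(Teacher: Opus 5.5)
Your reduction is sound. By \eqref{ordbncols}, $24\,\ord_{1/t}\cl{F}_{p^n}=m_{t,p^n}$, so cuspidality of $\cl{F}_{p^n}/\cl{H}_{p,n}$ is exactly the upper bound $24\,\ord_{1/t}\cl{H}_{p,n}<m_{t,p^n}$; the lower bound from \S\ref{17.6Sept} is not needed for this lemma. Your endpoint argument via \eqref{ordinfty} and Fricke symmetry is also correct.

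The gap is the interior inequality $S_i<p^2-1$, which is the whole content of the lemma. You give only a heuristic and a plan, and the heuristic points the wrong way. The interior exponents are not $(p-1)^2+O(1)$. For odd $n$ they are the constant $(p-1)(p-2)$; for even $n$ they alternate between $(p-1)^2-p$ and $(p-1)^2+1$. Let $b_j$ be their periodic extension to all $j\in\Z$. Then the true $S_i$ \emph{exceeds} the bulk sum $\sum_{j\in\Z}p^{-|i-j|}b_j$. At each end, the excess $+p$ of the exponent at $j=1$ over the bulk value (weight $p^{-(i-1)}$) outweighs two losses: the negative exponent at $j=0$ and the discarded tail, which costs $(p-2)p^{-i}$. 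The net change is $+(p+1)p^{-i}$ for odd $n$ and $+p^{-i}$ for even $n$, and symmetrically at the right end. Carrying this out gives
\[
S_i=p^2-p-2+(p+1)\bigl(p^{-i}+p^{-(n-i)}\bigr)\qquad(n\geq 3\ \text{odd}).
\]
For even $n$ it gives $S_i=p^2-2+p^{-i}+p^{-(n-i)}$ when $i$ is even, and $S_i=p^2-p-1+p^{-i}+p^{-(n-i)}$ when $i$ is odd. So the margin comes entirely from the bulk, and the ends eat into it. For even $n$ and even $i$ the margin is only $1-p^{-i}-p^{-(n-i)}$, not of order $p$. An argument resting on ``the negative endpoint terms and the truncation pull $S_i$ below $p^2-1$'' therefore cannot be completed as stated. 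These exact closed forms would finish your route, since $p^{-i}+p^{-(n-i)}<1$ in the first two cases and $<p$ in the third, for $0<i<n$ and $p^n>2$.

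The paper avoids all estimates. It computes $\cl{F}_{p^n}$ explicitly as the eta quotient whose exponents are the row sums of $B_{p^n}$ in \eqref{equ1}. It then subtracts exponents to get $\cl{F}_{p^n}/\cl{H}_{p,n}$ in closed form, namely $\e$ times nonnegative powers of the $\e_{p^j}$ and rescalings of the holomorphic quotient $\e\,\e_{p^2}^{p-1}/\e_p$. Since $\e$ vanishes at every cusp, cuspidality is immediate. The only failure is $p^n=2$, where the quotient is $1$.
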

\begin{proof}
From \eqref{5:25AM} and \eqref{equ1}, it follows that 
\begin{equation}
  \mathcal{F}_{p^n}=\begin{cases}\displaystyle{(\e\e_p)^{p-1}}&\text{ if $n=1$}\\ \\
  \displaystyle{\e_p^{p^2-1}}&\text{ if $n=2$}\\ 
  \displaystyle{(\e_p\e_{p^{n-1}})^{p^2-p}\prod_{j=2}^{n-2}\e_{p^j}^{(p-1)^2}}&\text{ if $n\geq3$.}
                    \end{cases}
\end{equation}
Hence, from \eqref{hpn} we obtain
\begin{equation}
  \frac{\mathcal{F}_{p^n}}{\mathcal{H}_{p,n}}=\begin{cases}\displaystyle{(\e\e_p)^{p-2}}&\text{ if $n=1$}\\ \\
    \displaystyle{(\eta\eta_{p^n})^{p-1}\eta_p^{p-2}\prod_{s=1}^{n/2-1}\frac{\eta_{p^{2s-1}}\eta_{p^{2s+1}}^{p-1}}{\eta_{p^{2s}}}}&\text{ if $n\geq2$ is even.}\\
 \displaystyle{(\e\e_{p^n})^{p-1}(\e_p\e_{p^{n-1}})^{p-2}\prod_{j=2}^{n-2}\e_{p^j}^{p-1}}&\text{ if $n\geq3$ is odd.}
                    \end{cases}
\label{12:26PM}\end{equation}
Since for all $s\in\N$, the eta quotient ${\eta_{p^{2s-1}}\eta_{p^{2s+1}}^{p-1}}/{\eta_{p^{2s}}}$ is a rescaling of ${\eta\eta_{p^2}^{p-1}}/{\eta_p}$, which is 
a holomorphic eta quotient\footnote{Since $A_{p^2}$ is diagonally dominant (see~\eqref{23July}) and since $p-1\geq1$, the orders of vanishing of ${\eta\eta_{p^2}^{p-1}}/{\eta_p}$
at both the cusps $1$ and $1/p^2$ of $\Gamma_0(p^2)$ (corresponding to the first and last rows of $A_p^2$) are nonnegative. Hence, we only require to 
check the nonnegativity of the order of vanishing of ${\eta\eta_{p^2}^{p-1}}/{\eta_p}$ at the cusp $1/p$, which is $(1-p+(p-1))/24=0$ (see~\eqref{27.04.2015.1}).}
of level $p^2$, and since the cuspidal eta quotient $\e$ is a factor of ${\mathcal{F}_{p^n}}/{\mathcal{H}_{p,n}}$ except for the case $p^n=2$, it follows that unless\footnote{${\mathcal{F}_{2}} = {\mathcal{H}_{2,1}}$.} $p^n=2$, the eta quotient ${\mathcal{F}_{p^n}}/{\mathcal{H}_{p,n}}$ is cuspidal. 
\end{proof}

\begin{lem}For a prime $p$ and $n\in\N$ with $p^n>2$,
let $X,X',X''\in\Z^{\bb{D}_{p^n}}$ be such that 
\begin{equation}\e^X=\mathcal{H}_{p,n},  \ \e^{X'}=h \text{ \ and \ } \e^{X''}=h',
\label{9:44AM}\end{equation}
where $h$ and $h'$ are as in \eqref{6:28PM}. Then 
$X,X',X''\in B_{p^n}\cdot[0,1)^{\bb{D}_{p^n}},$
where 
$B_{p^n}\in\Z^{\bb{D}_{p^n}\times\bb{D}_{p^n}}$ is as defined in \eqref{equ1}. 
\label{10:16PM}\end{lem}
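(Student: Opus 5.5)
We work with the coordinates $Y:=B_{p^n}^{-1}X$, $Y':=B_{p^n}^{-1}X'$, $Y'':=B_{p^n}^{-1}X''$. By \eqref{28.08.2015A}, \eqref{11:53AM} and \eqref{28.04}, the entry $Y_t$ equals $24\,\ord_{1/t}(\e^X;\Gm_0(p^n))/m_{t,p^n}$. So $Y$ is the vector of normalized orders at the cusp classes, and similarly for $Y'$ and $Y''$. In these terms the claim $X\in B_{p^n}\cdot[0,1)^{\bb{D}_{p^n}}$ says exactly that $0\le Y_t<1$ for every $t\in\bb{D}_{p^n}$, and likewise for $Y'$ and $Y''$. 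The lower bounds $Y,Y',Y''\ge0$ are immediate, since $\mathcal{H}_{p,n}$, $h$ and $h'$ are holomorphic. The work is in the strict upper bounds.

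For $X$ itself we use Lemma~\ref{6:18PM}. Since $p^n>2$, the quotient $\cl{F}_{p^n}/\mathcal{H}_{p,n}$ is cuspidal. By \eqref{5:25AM} and \eqref{ordbncols}, $\cl{F}_{p^n}=\e^{B_{p^n}\mathbf{1}}$, where $\mathbf{1}$ is the all-ones vector, so its normalized orders all equal $1$. Cuspidality of $\cl{F}_{p^n}/\mathcal{H}_{p,n}$ therefore means $1-Y_t>0$, i.e. $Y_t<1$, for every $t$. (Alternatively, Corollary~\ref{5:46AM} gives $Y\in(0,1)^{\bb{D}_{p^n}}$ directly, since $\mathcal{H}_{p,n}$ is cuspidal by \S\ref{17.6Sept}.)

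For the factors we use linearity. From \eqref{6:28PM} and \eqref{6:50PM} we get $X=X'+X''$, hence $Y=Y'+Y''$. Both $Y'$ and $Y''$ are nonnegative because $h$ and $h'$ are holomorphic on $\Gm_0(p^n)$. Therefore $Y'_t\le Y'_t+Y''_t=Y_t<1$ for every $t$, and the same bound holds for $Y''$. Hence $Y',Y''\in[0,1)^{\bb{D}_{p^n}}$, which gives $X',X''\in B_{p^n}\cdot[0,1)^{\bb{D}_{p^n}}$.

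The only substantive input is the cuspidality of $\cl{F}_{p^n}/\mathcal{H}_{p,n}$, which Lemma~\ref{6:18PM} already supplies. The one point to handle carefully is that $h$ and $h'$ are taken on $\Gm_0(p^n)$ itself, so that $X'$ and $X''$ are genuinely vectors in $\Z^{\bb{D}_{p^n}}$. This is the setting fixed in \eqref{6:28PM} and \eqref{9:44AM}, and it is justified there by Theorem~\hyperlink{E}{E}.
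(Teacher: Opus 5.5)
Your proposal is correct and follows essentially the same route as the paper. You pass to $Y=B_{p^n}^{-1}X$, use $X=X'+X''$ to get $Y=Y'+Y''$ with all three vectors nonnegative, and bound $Y$ strictly below $1$ using the cuspidality of $\cl{F}_{p^n}/\mathcal{H}_{p,n}$. The paper does this via Corollary~\ref{5:46AM}, which is your parenthetical alternative; your reading of $1-Y_t>0$ from $\cl{F}_{p^n}=\e^{B_{p^n}\mathbf{1}}$ is the same fact written out.
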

\begin{proof} 
Let $,Y,Y',Y''\in\Q^{\bb{D}_{p^n}}$ be such that\footnote{The invertibility of $B_{p^n}$ follows from \eqref{28.08.2015A}} 
\begin{equation}
 Y=B^{-1}_{p^n}X, \ Y'=B^{-1}_{p^n}X' \text{ \ and  \ }Y''=B^{-1}_{p^n}X''.
\label{11:58AM}
\end{equation}
Since each row of $B^{-1}_{p^n}$ is a positive multiple of a row of $A_{p^n}$ (see~\eqref{10:39AM} and \eqref{mtn}) and since $\cl{H}_{p,n}=h\times h'$
where both $h$ and $h'$ are holomorphic, it follows from \eqref{9:44AM} and \eqref{11:58AM} that
\begin{equation}
Y=Y'+Y'',
\nonumber\end{equation}
where  all the entries of $Y,Y'$ and $Y''$ are nonnegative (see~\eqref{28.04}). Hence, it suffices to show that  $Y\in[0,1)^{\bb{D}_{p^n}}.$
Since for $p^n>2$, both $\mathcal{H}_{p,n}$ and ${\mathcal{F}_{p^n}}/{\mathcal{H}_{p,n}}$ are cuspidal,
it follows from Corollary~\ref{5:46AM} and \eqref{11:58AM} that $Y\in(0,1)^{\bb{D}_{p^n}}$.
\end{proof}

From Lemma~\ref{10:16PM} and Lemma~\ref{00:36AM} we obtain the following.

\begin{co}
Let $p$ be a prime and $n\in\N$ with $p^n>2$. Let $\cl{S}_{p^n}$ be a special set of cusp classes of $\Gamma_0(p^n)$. Then each factor of the eta quotient 
$\cl{H}_{p,n}$ is  uniquely identified by its orders of vanishing at the cusp classes in 
 $\cl{S}_{p^n}$.
\label{9:53PM}\end{co}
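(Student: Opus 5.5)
The plan is to combine Lemma~\ref{10:16PM} with Lemma~\ref{00:36AM}, taking $N=p^n$. Let $h$ be a factor of $\cl{H}_{p,n}$. By definition, $h$ and $h':=\cl{H}_{p,n}/h$ are both holomorphic eta quotients. We restrict attention to factors on $\Gamma_0(p^n)$, which is the setting of \eqref{6:28PM}; for factors of arbitrary level we would first invoke Theorem~\hyperlink{E}{E}.

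\textbf{Step 1 (locating the exponent vector).} Write $h=\e^{X'}$ with $X'\in\Z^{\bb{D}_{p^n}}$, and write $h'=\e^{X''}$. The hypothesis $p^n>2$ is exactly the one required in Lemma~\ref{10:16PM}. That lemma therefore gives
$$X'\in B_{p^n}\cdot[0,1)^{\bb{D}_{p^n}}\cap\Z^{\bb{D}_{p^n}}.$$
Behind this lies the cuspidality of both $\cl{H}_{p,n}$ and $\cl{F}_{p^n}/\cl{H}_{p,n}$ (Lemma~\ref{6:18PM} and \S\ref{17.6Sept}). Together with Corollary~\ref{5:46AM}, this forces $B_{p^n}^{-1}X\in(0,1)^{\bb{D}_{p^n}}$. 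Positivity of the rows of $B_{p^n}^{-1}$ then splits this vector as $Y'+Y''$ with $Y',Y''\geq0$, so $Y'=B_{p^n}^{-1}X'$ lies in $[0,1)^{\bb{D}_{p^n}}$.

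\textbf{Step 2 (applying the key lemma).} Since $h$ is holomorphic, Lemma~\ref{00:36AM} applies directly with $N=p^n$ and $X=X'$. It says that $\e^{X'}$ is uniquely identified by its orders of vanishing at the cusp classes of the special set $\cl{S}_{p^n}$. More precisely, the argument runs as follows:
\begin{itemize}
\item the orders at $\cl{S}_{p^n}$ determine $\widetilde{Y}$;
\item $\widetilde{Y}$ determines $\widetilde{\ell}$ via the unimodular matrix $E_{p^n}$ of Lemma~\ref{1:36AM};
\item $\widetilde{\ell}$ determines $Y'$ modulo $\Z^{\bb{D}_{p^n}}$;
\item the constraint $Y'\in[0,1)^{\bb{D}_{p^n}}$ then pins down $Y'$, and hence $X'=B_{p^n}Y'$.
\end{itemize}

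\textbf{Step 3 (where care is needed).} The step that needs checking is that the uniqueness claimed in Lemma~\ref{00:36AM} is \emph{within} the class of all such $X'$. That is, two holomorphic eta quotients on $\Gamma_0(p^n)$ whose vectors both lie in $B_{p^n}\cdot[0,1)^{\bb{D}_{p^n}}$ and which share their orders on $\cl{S}_{p^n}$ must coincide. This is exactly the content of that lemma's proof, because the reconstruction map from the orders to $Y'$ is well defined. For $n=1$ the special set is all of $\{1,p\}$, so the claim is trivial there. For $n\geq2$ the set is $\{1,p^n\}$ or $\{p,p^n\}$, with $E_{p^n}$ as computed in the proof of Lemma~\ref{1:36AM}.

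Since every factor of $\cl{H}_{p,n}$ falls into this class by Step 1, the corollary follows.
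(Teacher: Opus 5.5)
Your argument is the paper's own: the paper derives this corollary simply by combining Lemma~\ref{10:16PM}, which puts the exponent vector of every factor on $\Gamma_0(p^n)$ into $B_{p^n}\cdot[0,1)^{\bb{D}_{p^n}}$, with Lemma~\ref{00:36AM}, which then gives the identification. Two side remarks need correcting, though the argument does not depend on either of them. First, for $n=1$ the definition forces $\alpha_p=1$, so $\cl{S}_p$ is only the class of $1/p$, not both classes; that case is therefore not trivial, but it is covered by the same key-lemma argument with $E_p=[1]$. Second, Theorem~\hyperlink{E}{E} only guarantees the existence of \emph{some} proper factor of level dividing $p^n$, so it cannot bring factors of arbitrary level into $\Gamma_0(p^n)$; the corollary concerns factors on $\Gamma_0(p^n)$, as in \eqref{6:28PM}.
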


\begin{lem}Let 
$p$ be a prime, $n\in\N$ and 
let $f$ be an eta quotient on $\Gm_0(p^n)$. Let $a:=24\cdot\ord_{1}(f;\Gm_0(p^n))$ and $b:=24\cdot\ord_{1/p^n}(f;\Gm_0(p^n))$. Then 
$$(p^{2-r_n}-1)\hs|\hs(a-b)$$
where $r_n\in\{0,1\}$ is the residue of $n$ modulo 2.
\label{11:37PM}\end{lem}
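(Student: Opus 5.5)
The plan is a direct computation from the order matrix: write $f=\e^X$ with $X\in\Z^{\bb{D}_{p^n}}$, and express both $a$ and $b$ as integer linear combinations of the exponents $X_{p^j}$. By \eqref{27.04.2015}, \eqref{28.04} and the explicit formula \eqref{12:47PM}, the cusp $1$ corresponds to the row $i=0$ of $A_{p^n}$ and the cusp $1/p^n$ to the row $i=n$. Substituting,
\begin{equation*}
a=\sum_{j=0}^{n}A_{p^n}(1,p^j)X_{p^j}=\sum_{j=0}^{n}p^{\,n-j}X_{p^j},
\qquad
b=\sum_{j=0}^{n}A_{p^n}(p^n,p^j)X_{p^j}=\sum_{j=0}^{n}p^{\,j}X_{p^j}.
\end{equation*}
This reads off directly from the first and last rows displayed in \eqref{23July}. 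Hence
\begin{equation*}
a-b=\sum_{j=0}^{n}\bigl(p^{\,n-j}-p^{\,j}\bigr)X_{p^j}.
\end{equation*}

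Since every $X_{p^j}$ is an integer, it suffices to show that $p^{2-r_n}-1$ divides each coefficient $p^{n-j}-p^{j}$. I would factor out the smaller power and write
\begin{equation*}
p^{\,n-j}-p^{\,j}=\pm\,p^{\min\{j,n-j\}}\bigl(p^{|n-2j|}-1\bigr).
\end{equation*}
The key observation is that $|n-2j|\equiv n\pmod 2$ for every $j$.

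If $n$ is even, then $r_n=0$ and $|n-2j|=2m$ is even. In that case $p^2-1$ divides $p^{2m}-1=(p^2)^m-1$, which also covers $m=0$. If $n$ is odd, then $r_n=1$, and $p-1$ divides $p^k-1$ for every $k\geq0$. In both cases the required divisibility holds term by term, so $(p^{2-r_n}-1)\mid(a-b)$.

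I do not expect a genuine obstacle here. The only point needing care is to use the exact entries of the extreme rows of $A_{p^n}$ (both give $p^{n-\min\{i,n-i\}-|i-j|}$ with $\min\{i,n-i\}=0$). With those entries in hand, the parity bookkeeping for the exponent $|n-2j|$ is the whole argument.
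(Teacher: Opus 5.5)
Your proposal is correct and is essentially the paper's own proof. Both read off $a=\sum_{j}p^{n-j}X_{p^j}$ and $b=\sum_{j}p^{j}X_{p^j}$ from the extreme rows of $A_{p^n}$, then conclude from the parity of $n-j$ versus $j$. Your factorization $p^{n-j}-p^{j}=\pm p^{\min\{j,n-j\}}\bigl(p^{|n-2j|}-1\bigr)$ simply spells out the divisibility step that the paper leaves implicit.
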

\begin{proof}Let $X\in\Z^{\bb{D}_{p^n}}$ be such that $f=\e^X$.
It follows from \eqref{28.04} and \eqref{12:47PM} that 
$$a=\sum_{j=0}^np^{n-j}X_{p^j}\text{ \ and \ }b=\sum_{j=0}^np^{j}X_{p^j}.$$
The claim now follows from the fact that $j$ and $n-j$ have the same parity 
if and only if $n$ is even.
\end{proof}

\

{\flushleft\textit{Proof of the irreducibility of $\cl{H}_{p,n}$ except for the cases in \eqref{redH}.}} 
Theorem~\hyperlink{E}{E} implies that it suffices
to prove that $\cl{H}_{p,n}$ is not factorizable on~$\Gm_0(p^n)$.
\vspace{3pt}

{\flushleft\textbf{Case I.} ($n$ even)}
\vspace{2pt}

Let $h$ be a factor of $\cl{H}_{p,n}$ on $\Gm_0(p^n)$, let $a:=24\cdot\ord_{1}(h;\Gm_0(p^n))$ and $b:=24\cdot\ord_{1/p^n}(h;\Gm_0(p^n))$. 
Since $1/p^n$ and $\infty$
represent the same cusp of $\Gamma_0(p^n)$ and since $\cl{H}_{p,n}$ is a Fricke eigenform of level $p^n$, it follows from \eqref{ordinfty} and \eqref{11:03AM} that 
both the nonnegative integers $a$ and $b$ are bounded above by~$1$. So, from
Lemma~\ref{11:37PM}, we obtain $a=b$. Now, it follows from Corollary~\ref{9:53PM} that
\begin{equation}
h(z)=\begin{cases}1&\text{if $a=b=0$}\\
\cl{H}_{p,n}(z)&\text{otherwise.}
\end{cases}
\label{00:42AM}\end{equation}
So, $\cl{H}_{p,n}$ is irreducible for each prime $p$ and even $n$.
\vspace{3pt}

{\flushleft\textbf{Case II.} ($np$ odd,  $n>3$)}
\vspace{2pt}

Since $n>3$ is odd, it follows from Case I that $\cl{H}_{p,n-1}$ is irreducible.
Since $\cl{H}_{p,n-1}$ is a Fricke eigenform of level $p^{n-1}$, for all $M\in p^{n-1}\Z$, every Atkin-Lehner involution of $\cl{H}_{p,n-1}$ on $\Gm_0(M)$ is just a rescaling of $\cl{H}_{p,n-1}$. In particular, \eqref{hpn} and \eqref{11:59AM} imply that
\begin{equation}\mathcal{AL}_{p^n,p^n}(\cl{H}_{p,n-1}(z))=\cl{H}_{p,n-1}(pz).
\label{12:16PM}\end{equation}
Since $\cl{H}_{p,n-1}$ is irreducible, it follows from Lemma~\ref{7:03PM} and Theorem~\hyperlink{E}{E} that $\cl{H}_{p,n-1}(pz)$ is also irreducible\footnote{See Corollary~$\ref{cor2}$.}. 

Since $n$ is odd and since $1/p^n$ and $\infty$ represent the same cusp of $\Gm_0(p^n)$, it follows from 
\eqref{ordinfty} that 
\begin{equation}
\ord_{1/p^n}(\cl{H}_{p,n-1}(z);\Gm_0(p^n))=\frac1{24}, \ 
\ord_{1/p^n}(\cl{H}_{p,n-1}(pz);\Gm_0(p^n))=\frac{p}{24}. 
\label{2:36PM}
\end{equation}
Hence, from \eqref{12:16PM}, \eqref{11:03AM} and 
\eqref{2:36PM}, 
we obtain
 \begin{equation}
 \ord_{1}(\cl{H}_{p,n-1}(pz);\Gm_0(p^n))=\frac{1}{24}, \
 \ord_{1}(\cl{H}_{p,n-1}(z);\Gm_0(p^n))=\frac{p}{24}.
 \label{2:50PM}
\end{equation}

Let $h$ be a factor of $\cl{H}_{p,n}$ on $\Gm_0(p^n)$, let $a:=24\cdot\ord_{1}(h;\Gm_0(p^n))$ and $b:=24\cdot\ord_{1/p^n}(h;\Gm_0(p^n))$. 
Since $1/p^n$ and $\infty$
represent the same cusp of $\Gamma_0(p^n)$ and since $\cl{H}_{p,n}$ is a Fricke eigenform of level $p^n$, it follows from \eqref{ordinfty} and \eqref{11:03AM} that 
both the nonnegative integers $a$ and $b$ are bounded above by $p+1$. Since $p>2$,
Lemma~\ref{11:37PM} implies that unless $a=b\in\{0,p+1\}$ (which yields the same results as in \eqref{00:42AM}), there are only two possibilities, viz. $a=p,b=1$ or $a=1,b=p$. 
Since both $\cl{H}_{p,n-1}(z)$ and $\cl{H}_{p,n-1}(pz)$ are irreducible and 
since $\{1,1/p^n\}$ represents
a special set of cusp classes  
of $\Gamma_0(p^n)$, it follows from \eqref{2:36PM}, \eqref{2:50PM}, Corollary~\ref{9:58AM}, Lemma~\ref{10:16PM} and Lemma~\ref{00:36AM}
that
\begin{equation}
h(z)=\begin{cases}\cl{H}_{p,n-1}(z)&\text{if $a=p$ and $b=1$}\\
\cl{H}_{p,n-1}(pz)&\text{otherwise,}
\end{cases}
\label{00:58AM}\end{equation}
i.e., each proper factor of $\cl{H}_{p,n}(z)$ is either $\cl{H}_{p,n-1}(z)$ or $\cl{H}_{p,n-1}(pz)$.
Now, it follows from \eqref{00:58AM}, \eqref{2:36PM}, \eqref{2:50PM} and \eqref{ordinfty} that
$$\cl{H}_{p,n}(z)=\cl{H}_{p,n-1}(z)\hs\cl{H}_{p,n-1}(pz).$$
However, \eqref{hpn} implies that for each prime $p$ and for all odd $n>3$, we have
\begin{equation}
    \frac{\cl{H}_{p,n-1}(z)\hs\cl{H}_{p,n-1}(pz)}{\cl{H}_{p,n}(z)}=\frac{\big(\e(p^2z)\e(p^{n-2}z)\big)^{p^2-p+1}}{\big(\e(pz)\e(p^{n-1}z)\big)^p}.
\nonumber\end{equation}
Thus, we get a contradiction\footnote{See~\eqref{6:50PM}.}!
Hence, $\mathcal{H}_{p,n}$ is irreducible except for the cases in \eqref{redH}.
\qed

\section{The matrix identities}
\label{mi}

We provide unimodular matrices $U_{p^n}$ and $V_{p^n}\in\Z^{\bb{D}_{p^n}\times\bb{D}_{p^n}}$ along with their inverses $U'_{p^n}$ and $V'_{p^n}$ such that $D_{p^n}=U_{p^n}B_{p^n}V_{p^n},$
where $B_{p^n}$ is as in (\ref{equ1}) and $D_{p^n}$ is as in (\ref{6:48AM}).
\vspace*{5pt}
\newline For $n=1$, define 
\begin{alignat*}{3}
U_{p}&:=\left(\begin{array}{rr}
0 & -1 \\
1 & p
\end{array}\right),\text{  }
&V_{p}&:=\left(\begin{array}{rr}
1 & p \\
0 & 1
\end{array}\right),\\
U'_{p}&:=\left(\begin{array}{rr}
p & 1 \\
-1 & 0
\end{array}\right)\text{ \hspace{2pt}and\hspace{2pt} }
&V'_{p}&:=\left(\begin{array}{rr}
1 & -p \\
0 & 1
\end{array}\right).
\end{alignat*}
For $n=2$, define 
\begin{alignat*}{3}
U_{p^2}&:=\begin{pmatrix}
0& -1& -p\\
0& 0& -1\\
1& p& p^2
  \end{pmatrix},\text{  }
&V_{p^2}&:=\begin{pmatrix}
1& 1& p^2\\
0& 1& 1\\
0& 0& 1
  \end{pmatrix},\\
U'_{p^2}&:=\begin{pmatrix}
p& 0& 1\\
-1& p& 0\\
0& -1& 0
  \end{pmatrix}\text{ \hspace{2pt}and\hspace{2pt} }
  &V'_{p^2}&:= \begin{pmatrix}
1& -1& -(p^2-1)\\
0& 1& -1\\
0& 0& 1
  \end{pmatrix}.
\end{alignat*}
\vspace*{5pt}
For $n=3$, define
\begin{alignat*}{3}U_{p^3}&:=\begin{pmatrix}
0&-1&-p&-p^2\\
0&0&-1&-p\\
0&0&-p&-(p^2+1)\\
1 &p &p^2 &p^3
  \end{pmatrix},\text{  }
&V_{p^3}&:=\begin{pmatrix}
1&0&1&p^3\\
0&0&1&p\\
0&-1&p&1\\
0&0&0&1
  \end{pmatrix},\\
\\
U'_{p^3}&:=\begin{pmatrix}
p&0&0&1\\
-1&p&0&0\\
0&-(p^2+1)&p&0\\
0&p&-1&0
  \end{pmatrix}\text{ \hspace{2pt}and\hspace{2pt} }
&V'_{p^3}&:= \begin{pmatrix}
1 &-1 &0 &-p(p^2-1)\\
0 &p &-1 &-(p^2-1)\\
0&1&0&-p\\
0&0&0&1
\end{pmatrix}.
\end{alignat*}
  \vspace*{5pt}
\newline For $n>3$, define $U_{p^n}=(U_{p^n}(p^i,p^j))_{\hs0\leq i,j\leq n}$ by
  \vspace*{2pt}
\begin{equation}\begin{tabular}{ l|c|c| }
\multicolumn{1}{r}{}
 &  \multicolumn{1}{c}{$j=0$}
 & \multicolumn{1}{c}{$j>0$}\\
\cline{2-3}
\vspace*{-.45cm}
&&\\
$i<n-1$ & 0 & $\left\{\begin{array}{cl}
-p^{j-i-1}&\text{if  $j>i$}\\  
0&\text{otherwise}    
      \end{array}\right.$\\
\cline{2-3}
\vspace*{-.45cm}
&&\\
$i=n-1$ & 0 & $-p^{n-j}\cdot\frac{p^{2(j-1)}-1}{p^2-1}$ \\
\cline{2-3}
$i=n$ &1 & $p^j$\\
\cline{2-3}
\end{tabular}\hspace{4pt}, 
\label{u}\end{equation}

\vspace{.2cm}\quad define $V_{p^n}=(V_{p^n}(p^i,p^j))_{\hs0\leq i,j\leq n}$ by
  \vspace*{2pt}
\begin{equation}\begin{tabular}{ l|c|c|c|c| }
\multicolumn{1}{r}{}
 & \multicolumn{1}{c}{$j=0$}
 &  \multicolumn{1}{c}{$0<j<n-1$}
 & \multicolumn{1}{c}{$j=n-1$} 
 & \multicolumn{1}{c}{$j=n$}\\
\cline{2-5}
$i=0$&1 &0&1&$p^n$\\
\cline{2-5}
\vspace*{-.45cm}
&&&&\\
$0<i<n$ & 0& $\left\{\begin{array}{cl}
-p^{i-j-1}&\text{if  $i>j$}\\  
0&\text{otherwise}    
      \end{array}\right.$&$p^{i-1}$&$p^{n-i-1}$\\
\cline{2-5}
$i=n$ &0 & $0$&0&1\\
\cline{2-5}
\end{tabular}\hspace{4pt},
\label{v}\end{equation}

\quad define $U'_{p^n}=(U'_{p^n}(p^i,p^j))_{\hs0\leq i,j\leq n}$ by
\begin{equation*}\begin{tabular}{ l|c|c|c|c|c| }
\multicolumn{1}{r}{}
 & \multicolumn{1}{c}{$j=0$}
 &  \multicolumn{1}{c}{$0<j<n-2$}
 & \multicolumn{1}{c}{$j=n-2$} 
 & \multicolumn{1}{c}{$j=n-1$}
 & \multicolumn{1}{c}{$j=n$}
\\
\cline{2-6}
$i=0$&$p$ &0&0&0&1\\
\cline{2-6}
$0<i<n-1$ & $\begin{matrix}-1\hspace{8pt}\\0\\ \vdots\\0\end{matrix}$& $\left\{\begin{array}{cl}
p&\text{if  $i=j$}\\
-1\hspace{8pt}&\text{if  $i=j+1$}\\
0&\text{otherwise}    
      \end{array}\right.$&$\begin{matrix}0\\ \vdots\\0\\p\end{matrix}$&0&0\\
\cline{2-6}
$i=n-1$ & 0 & $-p^{n-j}$&$-(p^2+1)$&$p$&0\\
\cline{2-6}
$i=n$ &0 &$p^{n-j-1}$&$p$&$-1\hspace{8pt}$&0\\
\cline{2-6}
\end{tabular}\hspace{2.5pt},
\end{equation*}
\quad and define $V'_{p^n}=(V'_{p^n}(p^i,p^j))_{\hs0\leq i,j\leq n}$ by
\begin{equation}\begin{tabular}{ l|c|c|c| }
\multicolumn{1}{r}{}
 & \multicolumn{1}{c}{$j=0$}
 &  \multicolumn{1}{c}{$0<j<n$}
 & \multicolumn{1}{c}{$j=n$}\\
\cline{2-4}
$i=0$&1 &$\begin{matrix}-1\quad&0\quad&\cdots\quad&0\quad\end{matrix}$&$-p^{n-2}(p^2-1)$\\
\cline{2-4}
$0<i<n-1$ & 0& $\left\{\begin{array}{cl}
p&\text{if  $i=j$}\\
-1\hspace{8pt}&\text{if  $i=j-1$}\\
0&\text{otherwise}    
      \end{array}\right.$&$-p^{n-i-2}(p^2-1)$\\
\cline{2-4}
$i=n-1$ & 0 & $\begin{matrix}1\quad&0\quad&\cdots\quad&0\quad\end{matrix}$&$-p^{n-2}$\\
\cline{2-4}
$i=n$ &0 & $0$&1\\
\cline{2-4}
\end{tabular}\hspace{4pt}. 
\label{v'}\end{equation}

\begin{lem} For a prime $p$ and $n\in\bb{N}$, let $U_{p^n},V_{p^n}, U'_{p^n}$ and $V'_{p^n}\in\Z^{\bb{D}_{p^n}\times\bb{D}_{p^n}}$ be the matrices as defined above. Let $B_{p^n}$ be as in $(\ref{equ1})$, $D_{p^n}$ be as in $(\ref{6:48AM})$
and let $I_{p^n}$ be the identity matrix in $\Z^{\bb{D}_{p^n}\times\bb{D}_{p^n}}$. Then 
$$U_{p^n}U'_{p^n}=I_{p^n},\hspace{3pt} V_{p^n}V'_{p^n}=I_{p^n}\hspace{3pt}\text{ and } \hspace{3pt}D_{p^n}=U_{p^n}B_{p^n}V_{p^n}.$$
\end{lem}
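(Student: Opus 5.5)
This is a direct verification: we multiply the explicit matrices and check the three identities entry by entry, organized by the row/column blocks of the tables. For $n=1,2,3$ the matrices are small and explicit, and we will multiply them out directly. For example, for $n=1$ we have $U_pB_p=\begin{pmatrix}1&-p\\ p-p&-p+p^2\cdot p\end{pmatrix}$ up to bookkeeping, and then right multiplication by $V_p$ clears the off-diagonal entry. These cases also serve as sanity checks of the general pattern. For $n>3$ we will first establish $V_{p^n}V'_{p^n}=I$ and $U_{p^n}U'_{p^n}=I$, and then the diagonalization.

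\textbf{Step 1 (the inverses).} For $V_{p^n}V'_{p^n}$ we split the index ranges according to the tables \eqref{v} and \eqref{v'}. Rows $0$ and $n$ are immediate. For a middle row $0<i<n$, the product with a middle column $j$ has two sources of contributions. One is the lower triangular block $L$ with entries $-p^{i-k-1}$, multiplied against the bidiagonal block ($p$ on the diagonal, $-1$ on the superdiagonal) of $V'$. The other is the column $j=n-1$ entry $p^{i-1}$, multiplied against the row $(1,0,\dots,0)$ of $V'$. The identity $-p^{i-j-1}\cdot p+p^{i-j}=0$ telescopes, and the boundary term at $j=1$ is cancelled by $p^{i-1}$. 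The last column, with $j=n$, requires a geometric sum $\sum_k p^{i-k-1}p^{n-k-2}(p^2-1)$, which must collapse against $p^{n-i-1}-p^{i-1}p^{n-2}$. We check this using $\sum_{k<i}p^{2(i-k)-2}(p^2-1)=p^{2i-2}\cdot(1-p^{-2i})\cdot\ldots$, which is routine. The product $U_{p^n}U'_{p^n}$ is handled the same way. The strictly upper triangular part of $U$ (entries $-p^{j-i-1}$) times the bidiagonal part of $U'$ telescopes, and row $n-1$ of $U$, with entries $-p^{n-j}\frac{p^{2(j-1)}-1}{p^2-1}$, is checked against the last two columns of $U'$ by the identity $\frac{p^{2j}-1}{p^2-1}=p^2\frac{p^{2(j-1)}-1}{p^2-1}+1$.

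\textbf{Step 2 (diagonalization).} Rather than forming $U_{p^n}B_{p^n}V_{p^n}$ directly, we will verify the equivalent identity $B_{p^n}V_{p^n}=U'_{p^n}D_{p^n}$, using Step 1. This is easier because the right-hand side is just $U'_{p^n}$ with its last two columns scaled by $p^{n-1}$ and $p^{n-1}(p^2-1)$. Since $B_{p^n}$ is tridiagonal (see \eqref{equ1}), each entry of $B_{p^n}V_{p^n}$ is a combination of at most three consecutive entries of a column of $V_{p^n}$.

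\begin{itemize}
\item For the middle columns $0<j<n-1$, the column of $V$ is $-(1,p,p^2,\dots)$ starting at row $j+1$. Applying the stencil $(-p,\,p^2+1,\,-p)$ kills everything except near the top, where it yields $p$ on the diagonal and $-1$ just below it, matching the bidiagonal part of $U'$. At the bottom rows $n-1$ and $n$ the stencil is modified; there the entries $-p^{n-j}$ and $p^{n-j-1}$ appear.
\item For column $0$ of $V$, which is $e_0$, we obtain the first column of $B$, namely $(p,-1,0,\dots)$, as required.
\item For column $n-1$ of $V$, $(1,1,p,\dots,p^{n-2},0)$, and for column $n$, $(p^n,p^{n-2},\dots,1,1)$, the geometric progressions are annihilated by the interior stencil because $-p\cdot p^{k-1}+(p^2+1)p^k-p\cdot p^{k+1}=p^k$ is not zero. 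So here we must check carefully that the outputs are $p^{n-1}$ times the corresponding columns of $U'$.
\end{itemize}

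The main obstacle is this boundary bookkeeping in the last two columns together with rows $0$, $n-1$ and $n$, where the irregular first and last columns of $B$ (entries $p,-1$ and $-1,p$) meet the non-geometric boundary entries of $V$. We would first work through this check for $n=4$ symbolically, to confirm the indexing conventions of the tables, and then do the general computation.
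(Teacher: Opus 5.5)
Your plan is correct and is essentially the paper's own argument. Both are direct entrywise checks after moving one unimodular factor to the other side: the paper verifies $D_{p^n}V'_{p^n}=U_{p^n}B_{p^n}$ using $V_{p^n}V'_{p^n}=I_{p^n}$, while you verify the mirror identity $B_{p^n}V_{p^n}=U'_{p^n}D_{p^n}$ using $U_{p^n}U'_{p^n}=I_{p^n}$, which works equally well.

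When you write it out, fix two slips. First, the interior stencil gives $-p\cdot p^{k-1}+(p^2+1)p^k-p\cdot p^{k+1}=0$, not $p^k$; this vanishing, together with the analogous one for ratio $1/p$, is exactly why columns $p^{n-1}$ and $p^n$ of $V_{p^n}$ are killed in the interior rows. Second, for $n=1$ the lower right entry of $U_pB_p$ is $-1+p\cdot p=p^2-1$, not $-p+p^3$.
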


\begin{proof}
For $n\leq 3$, the claim holds trivially. For $n>3$, the proofs of the 
equalities of the corresponding matrix entries 
in each of these matrix relations
involve at most summation of some geometric series. For example,
consider the identity $D_{p^n}=U_{p^n}B_{p^n}V_{p^n}$.
 It is equivalent to $D_{p^n}V'_{p^n}=U_{p^n}B_{p^n}$, assuming  $V_{p^n}V'_{p^n}=I_{p^n}$. Now  from \eqref{6:48AM} and \eqref{v'},  we see that for $i,j\in\{0,\hdots,n\}$, the $(i,j)$-th entry of $D_{p^n}V'_{p^n}$ is given by
\begin{equation*}\begin{tabular}{ l|c|c|c| }
\multicolumn{1}{r}{}
 & \multicolumn{1}{c}{$j=0$}
 &  \multicolumn{1}{c}{$0<j<n$}
 & \multicolumn{1}{c}{$j=n$}\\
\cline{2-4}
$i=0$&1 &$\begin{matrix}-1\quad&0\quad&\hdots\quad&0\quad\end{matrix}$&$-p^{n-2}(p^2-1)$\\
\cline{2-4}
$0<i<n-1$ & 0& $\left\{\begin{array}{cl}
p&\text{if  $i=j$,}\\
-1\hspace{8pt}&\text{if  $i=j-1$,}\\
0&\text{otherwise.}    
      \end{array}\right.$&$-p^{n-i-2}(p^2-1)$\\
\cline{2-4}
$i=n-1$ & 0 & $\begin{matrix}p^{n-1}\quad&0\quad&\hdots\quad&0\quad\end{matrix}$&$-p^{2n-3}$\\
\cline{2-4}
$i=n$ &0 & $0$&$p^{n-1}(p^2-1$)\\
\cline{2-4}
\end{tabular}\hspace{8pt}.\end{equation*}
If we consider the case $0<i<n-1$ and $0<j<n$, then from
$(\ref{u})$ and $(\ref{equ1})$ it follows that the product of $U_{p^n}(p^i,\hs\un)$, i.e. the $i$-th row of $U_{p^n}$
and $B_{p^n}(\un\hs,p^j)$, i.e. the $j$-th column of $B_{p^n}$ is \begin{align*}-\sum_{k=i+1}^np^{k-i-1}\hspace{2pt}B_{p^n}(p^k,p^j)&=\sum_{k=i+1}^np^{k-i-1}(p\hspace{1pt}\delta_{|k-j|\hspace{1pt},\hspace{1pt}1}-(p^2+1)\hspace{1pt}\delta_{k,\hspace{1pt}j})\\
&=\sum_{k=\max\{i+1,\hspace{1pt}j-1\}}^{j+1}p^{k-i-1}(p\hspace{1pt}\delta_{|k-j|\hspace{1pt},\hspace{1pt}1}-(p^2+1)\hspace{1pt}\delta_{k,\hspace{1pt}j})\\
&=\begin{cases}
\hspace{8pt}0&\text{ if $j<i$,}\\
\hspace{8pt}p&\text{ if $j=i$,}\\
-1&\text{ if $j=i+1$,}\\
\hspace{8pt}0&\text{ if $j>i+1$,}
\end{cases}
\end{align*}
where $\delta$ is the Kronecker delta function. So, the claim holds in this case.

Again, for $i=n-1$ and $0<j<n$, we have
\begin{align*}
U_{p^n}(p^{n-1},\hs\un)\cdot &B_{p^n}(\un\hs,p^j)\\
&=
-\sum_{k=1}^np^{n-k}\cdot\frac{p^{2(k-1)}-1}{p^2-1}\cdot\hspace{2pt}B_{p^n}(p^k,p^j)\\
&=\sum_{k=\max\{2,\hspace{1pt}j-1\}}^{j+1}p^{n-k}\cdot\frac{p^{2(k-1)}-1}{p^2-1}\cdot\left(p\hspace{1pt}\delta_{|k-j|\hspace{1pt},\hspace{1pt}1}-(p^2+1)\hspace{1pt}\delta_{k,\hspace{1pt}j}\right)\\
&=\begin{cases}
p^{n-1}&\text{ if $j=1$,}\\
0&\text{ otherwise,}
\end{cases}
\end{align*}
Thus, the claim also holds in this case.
The rest of the proof is analogous and requires only routine verifications as above.
\end{proof}

\section*{Conjectures and open problems}

Theorem~\ref{prp1} provides us with an example of a primitive and irreducible holomorphic eta quotient which is a Fricke eigenform of level $p^n$, except for the levels
appearing in \eqref{redH}, viz. primes, primes cubed and the odd powers of $2$. 
The nonexistence of such an eta quotient of a prime level is trivial. We conjecture that the same assertion also holds in the other two cases:

\begin{conj}
For $n$ odd, there does not exist any  primitive and irreducible holomorphic eta quotient which is a Fricke eigenform of level $2^n$.
\label{00:00AM}\end{conj}

Further, for cube of primes, even the condition of invariance under Fricke involution may be removed:

\begin{conj}For each prime $p$, 
there does not exist any primitive and irreducible holomorphic eta quotient of level $p^3$.
\label{00:13AM}\end{conj}

The above conjecture, which asserts the nonexistence of primitive and irreducible holomorphic eta quotients of certain levels,
contrasts well with Corollary~\ref{snld}, which 
states that for each $k\in\N$, there exists such an eta quotient of weight $k/2$.
In particular, Theorem~6.4 in \cite{B0} and the above conjecture together imply Conjecture~2.(a) of \cite{B3}.

\begin{conj}
Given two irreducible holomorphic eta quotients $f=\prod_{d|M}\e_d^{X_d}$ and $g=\prod_{\delta|N}\e_{\delta}^{Y_{\delta}}$ respectively on $\Gm_0(M)$ and $\Gm_0(N)$ with mutually coprime $M$ and~$N$, the holomorphic eta quotient $f\circledast g$,
defined by
\begin{equation}
    f\circledast g:=\prod_{d|M}\prod_{\delta|N}\e_{d\delta}^{X_dY_{\delta}}
\nonumber\end{equation}
is irreducible. 
\label{10:56PM}\end{conj}

The truth of the above conjecture would imply the irreducibility\footnote{See Theorem~\ref{prp1}.} of 
\begin{equation}
    \underset{\substack{p^n\| N\\ p\text{ prime}}}{\circledast}\cl{H}_{p,n},
\end{equation}
if no unitary divisor of $N$ is prime or cube of a prime or an odd power of 2.

 Since there are only $\mathrm{d}(N)$ cusp classes of $\GN$, 
 there are at most $\mathrm{d}(N)$ axial irreducible holomorphic eta quotients on $\GN$. 
Lemma~6 in \cite{B3} lists $\mathrm{d}(N)$ axial holomorphic eta quotients on $\GN$, none of which is factorizable on $\GN$ and Lemma~7 in \cite{B3} proves the irreducibility of some of those. We conjecture that all the axial holomorphic eta quotients given in Lemma~6 in~\cite{B3} are irreducible:
\begin{conj}
 There are exactly $\mathrm{d}(N)$ 
 axial irreducible holomorphic eta quotients on $\GN$.
\label{11:20PM}\end{conj}
It follows from Theorem~\hyperlink{E}{E} and Lemma~6 in \cite{B3} that the above conjecture holds in the case where $N$ is a prime power. The general case of the above conjecture then follows from Conjecture~\ref{10:56PM}. We note that Conjecture~\ref{11:20PM} also follows from
Lemma~6 in \cite{B3} and Conjecture~1 in \cite{B5}.

 \section*{Acknowledgments}
 The author is thankful to Don Zagier and Tomoyoshi Ibukiyama 
 for their comments. The numerical experiments for this paper were conducted with
 the open-source computer algebra system \textsf{PARI/GP} \cite{PARI2}.

\bibliography{lpe}
\bibliographystyle{IEEEtranS}
\nocite{*}
\end{document}